\definecolor{ccqqqq}{rgb}{0.8,0,0}
\definecolor{qqttcc}{rgb}{0,0.2,0.8}
\definecolor{cqcqcq}{rgb}{0.75,0.75,0.75}
\newcommand{\blue}[1]{{\color{blue}#1}}
\colorlet{symbols}{black} 
\tikzset{
	dot/.style={circle,fill=symbols,draw=symbols,inner sep=0pt,minimum size=0.4pt},
	basic/.style={draw=symbols},
	>=stealth,
	}
\newtheorem{theorem}{Theorem}
\newtheorem{lemma}[theorem]{Lemma}
\newtheorem{proposition}[theorem]{Proposition}
\newtheorem{remark}[theorem]{Remark}
\newcommand{\R}{{\mathbb R }}
\newcommand{\Z}{{\mathbb Z }}
\renewcommand{\Z}{{\mathbb Z}}
\newcommand{\xx}{\langle x \rangle}
\newcommand{\Ai}{\operatorname{Ai}}
\newcommand{\NL}{\mathrm{NL}}
\renewcommand{\epsilon}{\varepsilon}
\newcommand{\lo}{\mathrm{lo}}
\newcommand{\hi}{\mathrm{hi}}
\newcommand{\lin}{\mathrm{lin}}
\newcommand{\eel}{\mathrm{ell}}
\DeclareMathOperator{\sech}{\mathrm{sech}} 
\newcommand{\newsection}[1]{\section{#1}\setcounter{theorem}{0}
 \setcounter{equation}{0}\par\noindent}
\begin{document}
\selectlanguage{english}
\title{Dispersive decay of small data solutions for the KdV equation 
\\
D\'ecroissance dispersive des solutions \'a donn\'ees petites pour l'\'equation de KdV
}

\author{Mihaela Ifrim}
\address{Department of Mathematics, University of Wisconsin, Madison}
\email{ifrim@wisc.edu}

\author{Herbert Koch}
\address{Mathematisches Institut   \\ Universit\"at Bonn }
\email{koch@math.uni-bonn.de}

\author{ Daniel Tataru}
\address {Department of Mathematics \\
  University of California, Berkeley} 
\email{tataru@math.berkeley.edu}

\begin{abstract}
  We consider the  Korteweg--de Vries (KdV) equation, and prove that small localized data yields solutions which have dispersive decay on a quartic time-scale. This result is optimal, in view of the emergence of solitons at quartic time, as predicted by inverse scattering theory.
  
  \selectlanguage{french}
\noindent
{\sc R\'{e}sum\'{e}.}
Dans cet article nous consid\'erons l'\'equation de Korteweg--de Vries (KdV), et montrons que pour des
donn\'ees petites et localis\'ees, les solutions ont une dynamique
dispersive sur une \'echelle  de temps quartique. Ce r\'esultat
est optimal, comme le pr\'edit la th\'eorie de la diffusion inverse.
\end{abstract}

\selectlanguage{english}

\maketitle 

\tableofcontents

\section{Introduction}

We consider real solutions for the Korteweg--de Vries equation (KdV) 
\begin{equation}
\label{kdv}
\left\{
\begin{aligned}
& u_t + u_{xxx} - 6 uu_x   = 0\\
& u(0) = u_0,
\end{aligned}
\right.
\end{equation}
on the real line. Assuming that the initial data is  small and localized, we seek to understand the long time dispersive properties of the solution.

This has been a long term goal of research in this direction. 
In particular, one natural question to ask is whether, for localized initial data, the solutions to the nonlinear equation exhibit the same dispersive decay as the solutions to the corresponding linear equation. In general this is not the case globally in time,
 due primarily to two types of nonlinear solutions:
 \begin{enumerate}[label=(\roman*)]
 \item Solitons, which move to the right with constant speed.
 \item Dispersive shocks, where the nonlinearity acts like a transport
 term and pushes the dispersive part of the solution to the left.
\end{enumerate}

\medskip

This paper combines some earlier work and insight gained by the authors when analyzing global or long time  dynamical behaviour of solutions to certain models of dispersive equations. Our long term goal is to understand the \emph{soliton resolution conjecture} for the nonlinear Korteweg--de Vries equation (KdV).  Historically, solitary waves (water waves which do not disperse for a long time and which move at a constant speed without changing their shape) were first observed and reported by John Scott Russell in a shallow canal. He called such a wave   ``a wave of  translation, in a wave tank''. This phenomenon was first explained mathematically by Korteweg and de Vries  in \cite{kdv} in 1895. Solitons represent interesting mathematical objects that influence the long time dynamics of the solutions.

The \emph{soliton resolution conjecture} applies to many nonlinear dispersive equations and asserts, roughly speaking, that any reasonable solution to such equations eventually resolves into a superposition of a dispersive component (which behaves like a solution to the linear equation) plus a number of  ``solitons''. This should only be taken as a guiding principle, as many variations can occur; for instance the number of solitons
could be finite or infinite, while the dispersive part might not truly have linear scattering, but instead some modified scattering behavior.

This conjecture was studied in many different frameworks (i.e.\ for different dispersive equations like for example for the nonlinear Schr\"odinger equation (NLS), see \cite{terry} and references within) and it is known in many perturbative cases in the  setting: when the solution is close to a special solution, such as the vacuum state or a ground state, as well as in defocusing cases, where  no non-trivial bound states or solitons exist. But it is still almost completely open in non-perturbative situations (in which the solution is large and not close to a special solution) which contain at least one bound state.  

\medskip

Turning our attention to solutions to the KdV equation with small
initial data, one can distinguish two stages in  the nonlinear evolution from the perspective of soliton resolution. Initially, one expects the solutions to satisfy linear-like
dispersive bounds. This stage lasts until nonlinear effects 
(i.e.\ solitons and dispersive shocks) begin to emerge. The second stage corresponds to solutions which split into at least two  of the
following components: a linear dispersive part, a dispersive shock, and a soliton.

In this article we aim to describe the first of the two stages
above. To better frame the question, we restate the problem as follows:

\medskip
{\bf Question:} \emph{If $\epsilon \ll 1$  is the initial data size, then what is the  time scale up to which the solution will satisfy linear dispersive decay bounds?}

\medskip

Our main result identifies the quartic time scale $T_\epsilon = \epsilon^{-3}$ as the optimal time scale on which linear dispersive
decay for all localized data of size $\leq \epsilon$. 
The precise statement of the result is provided in Theorem~\ref{t:quartic} below.

We prove this, and also provide some heuristic reasoning, based on inverse scattering, as to why this result is optimal, in other
words that the quartic time scale that marks the earliest 
possible emergence of either solitons or dispersive shocks.
To our knowledge this is the first result that rigorously describes the  dispersive decay of the solutions on a quartic time-scale.

\subsection{The linear KdV flow} 

If one removes the nonlinearity and considers instead the linear Korteweg--de Vries equation
\begin{equation}
\label{kdv-lin}
\left\{
\begin{aligned}
& u_t + u_{xxx} = 0 \\
& u(0) = u_0, 
\end{aligned}
\right.
\end{equation}
then the solutions will exhibit Airy type decay.
To better understand this bound, it is useful to separate the 
domain of evolution $(t,x) \in \R^+ \times \R$ into three regions (see Figure 1 above/below):

\begin{enumerate}

\item The hyperbolic region 
\[
H := \{ x \lesssim  -t^{\frac13}\},
\]
where one  sees an oscillatory, Airy type behavior for the solution, with dispersive decay.

\item The self-similar region 
\[
S := \{ |x| \lesssim  t^{\frac13}\},
\]
where  the solution essentially looks like a bump function with $t^{-\frac13}$ decay.

\item The elliptic region, 
\[
E := \{ x \gtrsim  t^{\frac13}\},
\]
which is eventually  left by each oscillatory component of the solution, and consequently we have better decay.
\end{enumerate}

\vspace*{.4cm}
\begin{figure}[ht]
\begin{center}
\begin{tikzpicture}[line cap=round,line join=round,>=triangle 45,x=1.5cm,y=1.5cm]
\draw[->,color=black] (-3,0) -- (3,0)  node[right] {$x$};
\draw[color=red, dashed] (-2,1) -- (2,1)  node[right] {$t=1$};

\draw[->,color=black] (0,0) -- (0,3) node[anchor=north west] {$t$};
\clip(-3,0) rectangle (3,3);
\draw[color=qqttcc,dashed,samples=100,domain=0.0:3.0] plot(\x,{(\x)*(\x)*(\x)});
\draw[color=qqttcc,dashed,samples=100,domain=-3.0:0.0] plot(\x,{ abs((\x)*(\x)*(\x))});
\draw [color=qqttcc](2,2) node[anchor=west] {$E$};
\draw [color=black](1.4,3) node[anchor=north west] {$\vert x\vert =t^{\frac{1}{3}}$};
\draw [color=qqttcc](-2,2) node[anchor=east] {$H$};
\draw [color=qqttcc](.5,2.3) node[anchor=north] {$S$};
\end{tikzpicture}
\end{center}
\floatfoot{Figure 1. The partition into the three main regions: $H$, $E$, and $S$.}
\end{figure}

Consistent with the above partition we define the expression $\xx$ 
 in a time dependent fashion as 
\begin{equation} \label{eq:japanese} 
\xx := (x^2+|t|^\frac23)^\frac12 .
\end{equation} 
Then the following result describes the dispersive decay of linear KdV waves:

\begin{proposition}\label{p:lin}
Assume that the initial data $u_0$ for \eqref{kdv-lin} satisfies 
\begin{equation}\label{lin-data}
\| u_0\|_{H^1} + \|x^2 u_0\|_{L^2} \leq \epsilon.
\end{equation}
 Then the corresponding solution satisfies the bound
\begin{equation}\label{lin-decay}
t^\frac14 \xx^{\frac14} |u(t,x)| + t^\frac34 \xx^{-\frac14} |u_x(t,x)| \lesssim \epsilon.
\end{equation}
Furthermore, in the elliptic region $E$ we have the better bound
\begin{equation}
\label{lin-decay-e}
 \xx |u(t,x)| + t^\frac12 \xx^{\frac12}  |u_x(t,x)|\lesssim \epsilon \ln (t^{-\frac13} \xx).
\end{equation}

\end{proposition}

Here the  $\epsilon$ factor is not important, we have only added it for easier comparison with the nonlinear problem later on.

We also remark that the norm in \eqref{lin-data} is stronger than we 
need. In Section~\ref{s:lin}, where the proposition is proved,
we will in effect state and prove a sharper version, with the same conclusion but 
a weaker hypothesis. Incidentally, the bound \eqref{lin-decay-e} in the 
elliptic region is the one that follows from that weaker hypothesis, and 
can be improved under the assumption \eqref{lin-decay}; we leave the details for the 
interested reader.

\subsection{The nonlinear problem}

KdV is a completely integrable flow, and admits an infinite number of conservation laws. The first few ones are as follows: 
\[
\begin{split}
E_0 = & \   \int u^2\, dx,
\\
E_1 = & \  \int  u_x^2 +  2 u^3\, dx,
\\
E_2 = & \  \int  u_{xx}^2 + 10 u u_x^2  + 5 u^4\, dx,
\\
E_3 = & \ \int u_{xxx}^2 + 14 u u_{xx}^2 + 70 u^2 u_x^2 + 14 u^5\, dx.
\end{split}
\]

In a Hamiltonian interpretation, these energies generate commuting Hamiltonian flows with the 
Poisson structure  defined by the associated Poisson form  (which is the dual 
or inverse of the symplectic form)
\[ 
\Lambda(u,v) = \int u v_x \, dx.
\]
The first of these flows is the group of translations, and  the second is the KdV flow.

The local well-posedness and eventually the global well-posedness for the KdV equations has received a lot of attention over the last twenty years. To frame the discussion that follows we recall the scaling law for KdV, which  is 
\[
u(x,t) \to \lambda^2 u(\lambda x, \lambda^3 t),
\]
and corresponds to the critical Sobolev space $\dot H^{-\frac32}$.

Without being exhaustive we mention  only a few of the results. We begin with the study of the local  $L^2$ well-posedness of the KdV equations  which 
was proved both on the line and on the circle 
by Bourgain in \cite{bourgain}. Refinements of the ideas developed in \cite{bourgain} were further implemented by Kenig-Ponce-Vega in \cite{kpv}; their 
work extended the Sobolev index of the local well-posedness theory down to $s > -3/4$ in  $H^s(\mathbb{R})$, respectively $s>-1/2$ in  the $H^s(\mathbb{R}/\mathbb{Z})$ case. For the Sobolev indices $s=-3/4$ respectively $s=-1/2$ see the work of Christ-Colliander-Tao \cite{cct}, and Colliander-Keel-Staffilani-Takaoka-Tao \cite{ckstt1, ckstt2, ckstt3}.
Using inverse scattering techniques Kappeler and Topalov \cite{MR2267286} proved that the solution maps can be continuously (and globally in time) extended to $H^{-1} $ in the period case. 

The local well-posedness results were extended globally in time in \cite{ckstt2} with the sole exception of the case $s=-3/4$. This was later independently settled  by Guo~\cite{guo}  and Kishimoto~\cite{kishimoto}.  Very recently it was proved by Killip and Visan \cite{MR3820439} that the KdV flow is globally well-posed in $H^{-1}(\mathbb{R})$. This is a definitive result, as it is known that  below $H^{-1}(\mathbb{R})$ the flow map cannot be continuous (see \cite{molinet}).

An important role in the global results was played by the conservation laws for the KdV evolution. 
In addition to the classical conservation laws we also have conservation laws for $H^s$ norms of the solution for $s \geq -1$, see \cite{MR3400442, KT, MR3820439}. We will rely on these conservation laws  in the work that we will present here. In fact, these conservation laws also played a crucial role in the proof of the global well-posedness result in \cite{visan-kdv}.

\subsection{Solitons and dispersive shocks}

The nonlinear KdV evolution shares some of the features
of the linear evolution, but also exhibits some new 
behaviour patterns. Here we discuss two such patterns:
solitons and dispersive shocks.

\bigskip

\noindent \textbf{1. Solitons.}
As it is well-known, the KdV equation admits soliton solutions,
for instance the state 
\[
Q = -2 \sech^2 x 
\]
is a soliton which moves to the right with speed $4$. We can also translate and rescale it. Its rescales for instance are 
\[
Q_\lambda (x)= \lambda^2 Q(\lambda x), 
\]
which move to the right with speed $4 \lambda^2$.

A given KdV solution may contain one or more solitons.
The KdV equation is integrable so one expects  solitons to interact without changing their shape. 

Solitons within a given KdV solution are in a one-to-one correspondence with the negative eigenvalues of the Schr\"odinger operator 
\[ 
\psi \to - \psi'' + u(t) \psi.  
\]
Precisely, an eigenvalue $-\lambda^2$ corresponds to the soliton $Q_\lambda$ up to a possible shift.

The Schr\"odinger operator may have a negative eigenvalue even if the initial data $u_0$ is a small nice bump function, and a soliton will  emerge in this case. The only question is after how long does it happen?  Properties of the discrete spectrum are collected in Proposition \ref{spectrum} in the appendix. In particular we apply the results by Schuur \cite{schuur} to the case when $\phi_0$ is a Schwartz function, $0<\varepsilon$ is small and $u_0= \varepsilon \phi_0$. In this case there is exactly one negative eigenvalue of size $-\varepsilon^2/2$. Hence there is exactly one soliton, which has width $\varepsilon^{-1}$.
 This follows from estimates of Schuur \cite{schuur}.
 Heuristically, one expects this soliton to emerge 
 from the self-similar region when the spatial scales are matched. But this happens exactly at quartic time $\varepsilon^{-3}$.

\bigskip

\noindent \textbf{2. Dispersive shocks.}
If one neglects the third order derivative in the KdV equation
then what is left is the Burgers equation, which develops shocks
in finite time. The third order derivative adds dispersion to the mix,
sending the high frequencies to the left as a dispersive tail. 
This guarantees that shocks as a jump discontinuity cannot form.
Are we still left with a tangible Burgers like effect at low frequencies?
This does indeed happen, and is what we call a \emph{dispersive shock}.

To understand the mechanics of its possible  appearance,
suppose for a moment that the solution for KdV has the same behavior 
as the linear KdV solution in the self-similar region
(we focus our attention there because no oscillations are present). There
the solution has size $ u \approx \epsilon t^{-\frac13}$ and frequency $t^{-\frac13}$. 
On the other hand, interpreting 
the nonlinear term as a transport term, we see that it would shift 
the solution by $\epsilon t^{\frac23}$ within a  dyadic time region. 
This is consistent with the size of the self-similar region only 
if $\epsilon t^{\frac23} \lesssim  t^{\frac13}$, 
or equivalently  $ t \lesssim \epsilon ^{-3}$. Thus for larger times
one cannot expect a linear decay, and instead most of the 
mass will be pushed (if $u$ is positive) into the dispersive region; this of course depends
on the sign of the transport velocity, and would be effective only provided
that $u > 0$ in the self-similar region. 

To understand the shock quantitatively, one can consider another class
of special solutions to the KdV equation, namely the self-similar
solutions. These must be functions of the form
\[
u(t,x) = t^{-\frac23} \phi(x/t^\frac13),
\]
where $\phi$ solves the following Painlev\'e type equation
\begin{equation}\label{painlevetype}  
\frac13 (2 \phi + y \phi_y)  -  \phi_{yyy} + 6 \phi \phi_y  = 0.
\end{equation} 
This admits a one parameter family of solutions, given by the Miura map applied to solutions to the Painlev\'e II equation,
\begin{equation}\label{painleveII}  
  \psi'' - 2 \psi^3 -x \psi = \alpha   
\end{equation} 
with $\alpha \in \R$. If $\psi$ satisfies \eqref{painleveII} then $\phi= \psi'+\psi^2$ satisfies \eqref{painlevetype}.

Of particular interest is a family of solutions to \eqref{painleveII}, parametrized by $-1\le \sigma \le 1$. For such $\sigma$ there exists a unique bounded global solution (see \cite{NIST}) to \eqref{painleveII} which behaves like $\sigma \Ai(x)$ as $ x\to \infty$. It leads to solutions to \eqref{painlevetype}  which decay to the right,
and are oscillatory, Airy type to the left, and are positive around $y=0$ if $|\sigma| < 1$.  We expect these solutions to become important for understanding the large time behaviour near the self-similar region.  The solution with $\sigma=1$
is the famous Hastings-McLeod  solution \cite{hastings}.
In this way  we obtain KdV solutions with $t^{-\frac23}$ decay in the self-similar region. One expects the dispersive shock to cause either 
convergence in the self-similar region to one of these self-similar solutions,
or alternatively, to generate a slow motion along this family.
The first case happens for the Miura map of solutions to mKdV studied by \cite{MR3462131}. Unfortunately this class of solutions is non-generic, see Schuur \cite{schuur} and Ablowitz and Segur \cite{ablowitz}.

\subsection{The main result}

We now turn our attention to the nonlinear KdV equation \eqref{kdv}
with localized data of small size $\epsilon$. For this problem we seek the answer
to the following:
\smallskip

\textbf{ Question:} \emph{What is the optimal time scale, depending on $\epsilon$,
where nonlinear effects can become dominant ?}
\smallskip

As a quantitative version of the above question, we will
ask what is the optimal time scale on which  the linear dispersive decay bounds in Proposition~\ref{p:lin} hold for the nonlinear problem for small decaying initial data.  
Our main result asserts that this timescale is the quartic time scale,
$T_\epsilon = \epsilon^{-3}$:
\begin{theorem}\label{t:quartic}
Assume that the initial data $u_0$ for KdV satisfies 
\begin{equation}\label{small}
\|u_0\|_{\dot B^{-\frac12}_{2,\infty}} + \| x u_0\|_{\dot H^\frac12} \leq \epsilon \ll 1.
\end{equation}
Then for the quartic lifespan
\begin{equation}
|t| \ll \epsilon^{-3},
\end{equation}
we have the dispersive bounds (using the notation  \eqref{eq:japanese})  \begin{equation}
|u(t,x)| \lesssim \epsilon t^{-\frac14} \xx^{-\frac14}  
\qquad |u_x(t,x)| \lesssim \epsilon t^{-\frac34} \xx^\frac14.
\end{equation}
Furthermore, in the elliptic region $E$ we have the better bound
\begin{equation}
 \xx |u(t,x)| + t^\frac12 \xx^{\frac12}  |u_x(t,x)|\lesssim \epsilon \ln(t^{-1/3}\xx).
\end{equation}
The implicit constants are independent of $\varepsilon$ and $u$.
\end{theorem}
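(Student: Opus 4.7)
The plan is a bootstrap argument on the time interval $[0,T]$ with $T \ll \epsilon^{-3}$, taking the pointwise conclusions of the theorem with an enlarged implicit constant $C \gg 1$ as the bootstrap assumption and improving them back. The scheme has two layers: an $L^2$-based vector field hierarchy, and a Klainerman--Sobolev step that converts those $L^2$ bounds into pointwise Airy decay.

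For the first layer, I would exploit the operator $L := x - 3t\partial_x^2$, which by direct computation commutes with the linear part, $[\partial_t + \partial_x^3, L] = 0$, and which at $t=0$ acts by multiplication by $x$. Thus $\|Lu(0)\|_{\dot H^{\frac12}} = \|xu_0\|_{\dot H^{\frac12}} \lesssim \epsilon$ by hypothesis, while the companion norm $\|u(t)\|_{\dot B^{-\frac12}_{2,\infty}} \lesssim \epsilon$ is propagated by the Killip--Visan low regularity conservation laws for KdV. Commuting $L$ through \eqref{kdv} yields schematically
\[
(\partial_t + \partial_x^3)(Lu) = 6u(Lu)_x + R(t,u,u_x),
\]
where $R$ collects lower order quadratic contributions in $u, u_x$, including a $t$-weighted term of the form $t(u_x^2)_x$ coming from the commutator of $L$ with the $\partial_x$ in the nonlinearity. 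A $\dot H^{\frac12}$ energy estimate then produces a Gronwall-type inequality driven by $\|u_x(t)\|_{L^\infty}$, which under the bootstrap is bounded by $\epsilon t^{-2/3}$ in the worst-case self-similar region. Since $\int_0^T t^{-2/3}\,dt \sim T^{1/3}$, the Gronwall exponent is $C\epsilon T^{1/3}$, which stays bounded precisely on the quartic scale $T \ll \epsilon^{-3}$.

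For the second layer, I would establish a Klainerman--Sobolev inequality adapted to the Airy propagator, of schematic form
\[
t^{\frac14}\xx^{\frac14}|u(t,x)| + t^{\frac34}\xx^{-\frac14}|u_x(t,x)| \lesssim \|u\|_{\dot B^{-\frac12}_{2,\infty}} + \|Lu\|_{\dot H^{\frac12}},
\]
together with a sharper elliptic variant that uses the weight encoded by $L$ to push the solution off the support in $E$. For linear solutions this inequality reduces to Airy stationary phase, and on nonlinear solutions one pays a cubic correction via the Duhamel representation, which is harmless on the quartic timescale under the bootstrap hypothesis. The pointwise bounds so obtained improve the bootstrap constant, closing the argument.

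The main obstacle is the derivative-loss source $t(u_x^2)_x$ in the equation for $Lu$: naive pairing with $Lu$ in $\dot H^{\frac12}$ produces a term that is not a priori absorbed by the energy, and whose $t$-prefactor is exactly at the threshold of integrability on the quartic scale. I would address this by a combination of a normal form / modified energy correction that neutralizes the worst resonant quadratic interactions (exploiting that the KdV resonance function $\xi_1^3 + \xi_2^3 - (\xi_1+\xi_2)^3 = -3\xi_1\xi_2(\xi_1+\xi_2)$ is nondegenerate away from its trivial zero set) and the identity $(Lu)_x = L(u_x) + u$ to redistribute derivatives onto lower order factors. The precise balance between the $t^{-2/3}$ pointwise decay of $u_x$ in the self-similar region and the time integration is exactly what fixes the quartic threshold, so any inefficiency in handling this term would weaken the final time scale.
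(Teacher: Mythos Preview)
Your architecture (bootstrap, vector field, Klainerman--Sobolev) matches the paper's, but two concrete points would prevent the argument from closing as written.

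First, the Gronwall heuristic is not correct. You claim the $\dot H^{\frac12}$ energy estimate is driven by $\|u_x(t)\|_{L^\infty}$, bounded by $\epsilon t^{-2/3}$ ``in the worst-case self-similar region.'' But under the bootstrap $|u_x| \lesssim \epsilon t^{-3/4}\xx^{1/4}$, the quantity $\|u_x(t)\|_{L^\infty_x}$ is not controlled at all---the bound grows like $|x|^{1/4}$ in the hyperbolic region. What \emph{is} uniformly available is $\|u\|_{L^\infty} \lesssim \epsilon t^{-1/3}$, and a naive energy estimate driven by this integrates to $\epsilon T^{2/3}$, yielding only $T \lesssim \epsilon^{-3/2}$. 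The paper (Section~5) shows that obtaining the $t^{-2/3}$ growth rate for the linearized/adjoint linearized equation in $\dot H^{\mp 1/2}$ already requires a normal form energy correction $E^{[3]}$, independently of any source term. You only invoke normal forms for the source, not here.

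Second, you identify the derivative-loss source $t(u_x^2)_x$ as ``the main obstacle.'' The paper sidesteps this term entirely by replacing $Lu$ with the nonlinear variant
\[
L^{NL}u := xu - 3tu_{xx} + 9tu^2 = Lu + 9tu^2,
\]
which is the scaling derivative of $u$ and solves the adjoint linearized equation with source $3u^2$ only---no $t(u_x^2)_x$. So your proposed normal form, if carried out correctly, would have to rediscover the $9tu^2$ shift. Even after that, the remaining $u^2$ source is \emph{not} in $L^1_t\dot H^{1/2}$ over the hyperbolic region (Airy decay $t^{-1/4}|x|^{-1/4}$ is too slow); the paper requires a further decomposition $u^2 = (\partial_t+\partial_x^3)w_1 + f_1$ with $w_1$ a second quadratic normal form built from the $\pm$ frequency splitting of $u$, and only $f_1$ genuinely perturbative (Proposition~6.1). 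Your sketch does not anticipate this second layer, and without it the estimate for $L^{NL}u$ (or $Lu$) does not close on the quartic scale.
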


A small multiple of the Dirac measure at $0$ is a particular case of initial data satisfying the assumptions. More generally, if the initial data is a Dirac measure, then, with a scaling argument, the theorem gives bounds for the corresponding solution up to a small time.

The time scale in this result is optimal. To clarify this assertion, in the
last section we  discuss the possible emergence of solitons from the dispersive flow at quartic time. In a similar manner, dispersive shocks
may also arise at the same time, as argued in their brief heuristic discussion above.

We also comment on the choice of the norms in the theorem.
The Besov space $\dot B^{-\frac12}_{2,\infty}$ is the minimal one
at low frequency where we can place our initial data:  A smooth localized bump function with nonzero integral is in 
$\dot B^{-\frac12}_{2,q}$ if and only if $q=\infty$. Here only frequencies larger than $\epsilon$
are interesting, and below that we can freely flatten off the $\dot H^{-\frac12}$ Fourier weight. Choosing this norm also at high frequency
is harmless, and allows us to use scaling in order to streamline the analysis.

The $\dot H^\frac12$ norm for $x u$ scales in exactly the same way 
as the above Besov norm. Their combination is  exactly consistent with the pointwise decay rates above even for the linear KdV flow with fully localized data. At a technical level, the $\dot H^\frac12$ corresponds by duality 
to the $\dot H^{-\frac12}$ well-posedness for the linearized equation.
In turn, $\dot H^{-\frac12}$ is an optimal space where this well-posedness for the linearized equation
can be studied in that  well-posedness  in any other Sobolev space $\dot H^s$ implies\footnote{To make this accurate one needs to consider simultaneously the forward and backward well-posedness, as these are interchanged
by duality.} $\dot H^{-\frac12}$ well-posedness.

It is instructive to relate the theorem to inverse scattering techniques. The assumptions we make on the initial data are not strong enough to exclude an infinite number of negative eigenvalues for the corresponding Schr\"odinger operator. In fact, it is not hard to construct a potential $u$ satisfying the assumptions for a given $\varepsilon>0$ with an infinite number of negative eigenvalues. There are a number of papers on  asymptotics for fast decaying initial data \cite{ablowitz, schuur, deift}. 
To our knowledge no quantitative bounds near the self-similar region are available - the difficulty is the emergence of solitons. On the other hand slightly sharpened  asymptotics of Schuur (based on the inverse scattering procedure)   show that solitons emerge at the quartic time scale for a large class of initial data, see the discussion in the appendix.

We remark that the similar problem for the Benjamin-Ono equation was considered in recent work by the first and the last author \cite{IT-BO}. There the 
optimal time scale turns out to be the almost global one, $T_\epsilon = e^{\frac{c}{\epsilon}}$.

The structure of the paper is as follows: in Section~\ref{s:lin} we prove the linear KdV bound in Proposition~\ref{p:lin}. Along the way we introduce some tools which 
will be very useful in the nonlinear analysis later on. In Section~\ref{s:nonlin} we begin the proof of our main result,
and reduce it to four key elements: (i) energy estimates for $u$,
(ii) energy estimates for the linearized equation, (iii) energy estimates for a nonlinear
version $L^{\NL} u$ of $Lu$ related to the  scaling derivative of $u$, and (iv) a nonlinear Klainerman-Sobolev inequality which yields the pointwise estimates starting from the $L^2$
bounds. These four largely independent steps are carried out in the 
following three sections. 

Finally, in the last section we discuss the optimality of our result
in two steps. First we use the inverse scattering tools to discuss the 
possible emergence of solitons from small initial data. Then we provide a heuristic argument for the appearance of dispersive shocks at the 
quartic time scale.

\subsection{A few notations and definitions}
We recall the definition of the scaled Japanese bracket \eqref{eq:japanese} .  
Throughout the paper we use a standard Littlewood-Paley decomposition,
\[
u=\sum _{\lambda }P_{\lambda} u:=\sum_{\lambda} u_{\lambda},
\]
where $\lambda \in 2^\Z$ and $u_{\lambda }$ are frequency localized in dyadic annuli $ \left\{ \vert \xi \vert \approx \lambda\right\} $. We also use the related notations $P_{>\lambda}$,  $P_{<\lambda}$, and correspondingly $u_{>\lambda}$, $u_{<\lambda}$. 

In particular we will use 
the time dependent multipliers $P^+$, $P^-$ and $P_{\lo}$
which select the regions $\{ \xi > t^{-\frac13}\}$, $\{\xi <- t^{-\frac13}\}$ and $\{| \xi |\lesssim t^{-\frac13}\}$. 

With these notations the homogeneous Besov norm $
\dot{B}^{-\frac{1}{2}}_{2,\infty} $ is defined by
\[
\Vert u\Vert_{ \dot{B}^{-\frac{1}{2}}_{2,\infty}} =\sup_{\lambda} \lambda^{-\frac{1}{2}} \Vert u_{\lambda}\Vert_{L^2}.
\]
The homogeneous Sobolev space   $\dot H^\frac12$, $\dot H^{-\frac12}$ are the standard spaces defined by the usual Fourier multipliers.  

\subsection*{Acknowledgments} Mihaela Ifrim was partially supported by a Clare Boothe Luce Professorship. Herbert Koch was partially supported by the Deutsche Forschungsgemeinschaft (DFG, German Research Foundation) through the Hausdorff Center for Mathematics under Germany's Excellence Strategy - EXC-2047/1 - 390685813 and through CRC 1060 - project number 211504053. Daniel Tataru was partially supported by the NSF grant DMS-1800294 as well as by a Simons Investigator grant from the Simons Foundation.

\newsection{Linear analysis}
\label{s:lin}

In this section we consider dispersive bounds for the linear KdV equation
\eqref{kdv-lin}, and prove Proposition~\ref{p:lin}. We begin with a heuristic discussion.

The fundamental solution for \eqref{kdv-lin} can be described using the Airy function,
\[
K(t,x) = t^{-\frac13} \Ai(x/t^\frac13). 
\]
Explicitly, the solution to \eqref{kdv-lin} is 
\[
u(t,x)=K(t,x)\ast u_{0}(x).
\] 
Based on the known asymptotics for the Airy function,
it follows that solutions with integrable localized initial data
\begin{equation}\label{L1loc-data}
\| u_0 \|_{L^1} \leq 1, 
\end{equation}
 and  with the  support supp $u_0$ included in the interval $\left[-1,0\right]$, then the solution and its derivative will satisfy the same decay bounds for $t \gtrsim 1$:
\begin{equation}\label{Airy-decay}
|u(t,x)| \lesssim t^{-\frac14}  \xx^{-\frac14} e^{-\frac{2}{3} x_+^{\frac{3}{2}} t^{-\frac{1}{2}}} 
\mbox{ and }|u_x(t,x)| \lesssim  t^{-\frac34} \xx^\frac14 e^{-\frac{2}{3} x_+^{\frac{3}{2}}t^{-\frac{1}{2}}}.
\end{equation}

Our goal now is to relax the compact support assumption to a decay 
estimate, while, at the same time, to provide a more robust proof
of the pointwise decay bound which will be later adapted to the nonlinear
problem.

Precisely we introduce the time dependent operator
\[
L(t) := x - 3t \partial_x^2,
\]
which is the push forward of the operator $x$ along the linear KdV flow
and which satisfies the following properties:
\[
\left[ \partial_x , L\right]=1,\quad \left[ \partial_t+\partial_x^3, L\right] =0.
\]

If $u$ solves the equation~\eqref{kdv-lin} then so does $Lu$,
therefore we have at our disposal $L^2$ type bounds for both $u$ 
and $Lu$. One might be tempted to try to work with both $u$ and $Lu$ 
in $L^2$. However, it turns out to be more efficient to work in
the following functional framework:
\[
u \in \dot B^{-\frac12}_{2,\infty}, \qquad Lu \in \dot H^{\frac12}.
\]
At time $t = 0$, these norms can be readily estimated in terms of 
the norms in Proposition~\ref{p:lin},
\[
\| u_0 \|_{\dot B^{-\frac12}_{2,\infty}} + \|x u_0 \|_{\dot H^{\frac12}}
\lesssim \| u_0\|_{H^1} + \| x^2 u_0\|_{L^2}.
\]
Because of this, we can replace Proposition~\ref{p:lin}
with the following stronger form:

\begin{proposition} \label{p:lin+}
Assume that the initial data $u_0$ for \eqref{kdv-lin} satisfies 
\begin{equation}\label{lin-data+}
\| u_0 \|_{\dot B^{-\frac12}_{2,\infty}} + \|x u_0 \|_{ \dot H^{\frac12}} \leq 1. 
\end{equation}
 Then the corresponding solution $u$ satisfies the pointwise bounds 
\begin{equation}\label{lin-decay+}
t^\frac14 \xx^{\frac14} |u(t,x)| + t^\frac34 \xx^{-\frac14} |u_x(t,x)| \lesssim 1, \qquad x \in \R.
\end{equation}
Furthermore, in the elliptic region $E$ we have the better bound
\begin{equation}
\xx |u(t,x)| + t^\frac12 \xx^{\frac12}  |u_x(t,x)|\lesssim  \ln (t^{-\frac13} \xx)  .
\end{equation}
\end{proposition}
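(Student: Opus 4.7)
The plan is to exploit the vector field $L = x - 3t\partial_x^2$, which commutes with $\partial_t + \partial_x^3$. Under \eqref{lin-data+}, the $L^2$-preserving Airy flow keeps $\|u(t)\|_{\dot B^{-1/2}_{2,\infty}} \lesssim 1$, and since $Lu$ itself solves \eqref{kdv-lin} with initial data $xu_0$, the bound $\|Lu(t)\|_{\dot H^{1/2}} \lesssim 1$ is also time-independent. Both hypothesis norms and the claimed pointwise bound transform compatibly under the KdV rescaling $u(t,x) \mapsto \gamma^2 u(\gamma^3 t, \gamma x)$, so I reduce to proving the inequalities at $t = 1$.

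Next I decompose $u = \sum_\lambda P_\lambda u$ dyadically in frequency. The Besov bound gives $\|P_\lambda u(1)\|_{L^2} \lesssim \lambda^{1/2}$, and since $[L,P_\lambda] = [x,P_\lambda]$ is a Fourier multiplier of size $\lambda^{-1}$ at frequency $\lambda$, I also get $\|LP_\lambda u(1)\|_{L^2} \lesssim \lambda^{-1/2}$. A direct computation shows
\begin{equation*}
\widehat{LP_\lambda u}(t,\xi) = ie^{it\xi^3}\,\partial_\xi\!\left[\psi_\lambda(\xi)\hat u_0(\xi)\right]
\end{equation*}
(the Airy phase cancels), so these bounds translate into $\|\psi_\lambda \hat u_0\|_{L^2_\xi} \lesssim \lambda^{1/2}$ and $\|\partial_\xi(\psi_\lambda \hat u_0)\|_{L^2_\xi} \lesssim \lambda^{-1/2}$; a one-dimensional Sobolev estimate on an interval of length $\sim\lambda$ then gives the uniform pointwise Fourier bound $\|\psi_\lambda \hat u_0\|_{L^\infty_\xi} \lesssim 1$.

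The rest is stationary phase applied to
\begin{equation*}
P_\lambda u(1,x) = \frac{1}{2\pi}\int e^{i(x\xi + \xi^3)}\psi_\lambda(\xi)\hat u_0(\xi)\,d\xi,
\end{equation*}
with $\phi'(\xi) = x + 3\xi^2$. Setting $\lambda_* := \max\bigl(1,\sqrt{|x|/3}\bigr) \sim \xx^{1/2}$: in the hyperbolic or self-similar region ($x \lesssim 1$) the real critical point $\xi_* \sim \lambda_*$ has $|\phi''(\xi_*)| \sim \lambda_*$, so stationary phase plus the uniform amplitude bound gives $|P_{\lambda_*}u(1,x)| \lesssim \lambda_*^{-1/2}$; for $\lambda \not\sim \lambda_*$, $|\phi'(\xi)| \gtrsim \max(\lambda^2,\lambda_*^2)$ on the support of $\psi_\lambda$ and integration by parts yields rapidly-decaying contributions, so summing dyadically gives $|u(1,x)| \lesssim \xx^{-1/4}$, with $|u_x|$ picking up an extra $\xi_* \sim \xx^{1/2}$ to give $\xx^{1/4}$. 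In the elliptic region $x \gg 1$ there is no real critical point: $|\phi'| \gtrsim x$ on $|\xi| \lesssim x^{1/2}$, so one integration by parts produces $|P_\lambda u(1,x)| \lesssim x^{-1}$, and summing the $\sim \log x$ relevant dyadic scales yields the claimed $|u(1,x)| \lesssim \xx^{-1}\ln\xx$.

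The main technical obstacle will be executing these oscillatory-integral estimates using only the $\dot H^{1/2}$-level smoothness of $\hat u_0$ supplied by the $L$-bound, keeping the stationary-phase and integration-by-parts errors uniform in $\lambda$ and consistently handling the transitions between the regions $H$, $S$, and $E$.
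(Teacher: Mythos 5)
Your argument is correct in outline, but it takes a genuinely different route from the paper. You work on the Fourier side: the identity $\widehat{Lu}=ie^{it\xi^3}\partial_\xi\hat u_0$ converts the hypotheses into $L^2$ control of $\psi_\lambda\hat u_0$ and its $\xi$-derivative on each dyadic annulus, whence $\|\psi_\lambda\hat u_0\|_{L^\infty}\lesssim 1$ and $\|\partial_\xi(\psi_\lambda\hat u_0)\|_{L^1}\lesssim 1$, which is exactly what van der Corput and a single integration by parts need. The paper instead never writes the oscillatory integral: after the same reduction to $t=1$ it localizes to dyadic \emph{spatial} regions $A_R=\{\xx\approx R\}$, proves elliptic $L^2$ bounds for $u,u_x,u_{xx}$ and a low-frequency bound for $Lu$ there, and then treats $(x-3\partial_x^2)v=f$ as an ODE in $x$ — a Gronwall/energy argument (with an $f_{lo}v$ energy correction) in the hyperbolic region, Sobolev embedding in the self-similar region, and an elliptic $L^2$ estimate after subtracting $x^{-1}f_{lo}$ in the elliptic region. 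The trade-off is explicit in the paper: your stationary-phase proof is the natural one for the linear flow, but it relies on the exact solution formula and cannot be transported to the nonlinear equation, whereas the paper's physical-space argument is built precisely so that it survives (Sections 5--6) when $L$ is replaced by $L^{NL}$ and $f$ acquires the $u^2$ term. Two small points to tighten in your write-up: with only $\partial_\xi(\psi_\lambda\hat u_0)\in L^1$ you can integrate by parts exactly once, so the off-critical dyadic blocks decay like $\max(\lambda,\lambda_*)^{-2}$ rather than rapidly (still summable, and this single power is also what produces the $\ln\xx$ loss in $E$); and in the elliptic region the infinitely many scales $\lambda\lesssim x^{-1}$ must be handled by the trivial bound $\|\psi_\lambda\hat u_0\|_{L^1}\lesssim\lambda$ rather than by integration by parts, otherwise the sum of the $x^{-1}$ contributions diverges.
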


Furthermore, since both Sobolev norms for $u$ and $Lu$ are 
preserved in time, it will suffice to prove the following
fixed time result:

\begin{lemma}
Let $t >0$. Assume that a function $u\in \dot B^{-\frac12}_{2,\infty}(\R)$ satisfies
\begin{equation}\label{lin-L2}
 \| u \|_{\dot B^{-\frac12}_{2,\infty}} + \| L(t) u\|_{\dot H^{\frac12}} \leq 1 .
\end{equation}
Then it also satisfies the pointwise bounds 
\begin{equation}\label{lin-decay++}
t^\frac14 \xx^{\frac14} |u| + t^\frac34 \xx^{-\frac14} |u_x| \lesssim 1,
\qquad x \in \R.
\end{equation}
Furthermore, in the elliptic region $E$ we have the better bound
\begin{equation}
 \xx |u(x)| + t^\frac12 \xx^{\frac12}  |u_x(x)|\lesssim  \ln (t^{-\frac13} \xx)  .
\end{equation}
\end{lemma}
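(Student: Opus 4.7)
The plan is to pass to the Fourier side via an Airy twist that conjugates $L$ into pure differentiation, extract a uniform pointwise bound on the twisted Fourier transform via one dimensional Sobolev embedding, and then convert this into pointwise bounds on $u$ and $u_x$ through stationary and non-stationary phase analysis of a cubic oscillatory integral.

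First, I set $v(\xi) := e^{-it\xi^3}\hat u(\xi)$. A direct Fourier computation (using $\widehat{xu} = i\hat u'$ and $\widehat{u_{xx}} = -\xi^2 \hat u$) gives the identity $\widehat{Lu}(\xi) = i e^{it\xi^3} v'(\xi)$. Consequently, the hypothesis \eqref{lin-L2} translates into the two dyadic bounds
\[
\|v\|_{L^2(\{|\xi|\sim\lambda\})}\lesssim \lambda^{\frac12}, \qquad \|v'\|_{L^2(\{|\xi|\sim\lambda\})}\lesssim \lambda^{-\frac12}
\]
valid for every dyadic $\lambda>0$, and the one dimensional Sobolev embedding on the dyadic interval $\{|\xi|\sim\lambda\}$ of length $\sim\lambda$ then produces the crucial uniform estimate
\[
\|v\|_{L^\infty(\R)}\lesssim 1.
\]

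Fourier inversion now expresses
\[
u(x_0) = c\int v(\xi)\, e^{i\phi(\xi)}\, d\xi, \qquad u_x(x_0) = c\int i\xi\, v(\xi)\, e^{i\phi(\xi)}\, d\xi,
\]
with the cubic phase $\phi(\xi)=x_0\xi + t\xi^3$ whose derivative $\phi'(\xi)=x_0+3t\xi^2$ vanishes at $\xi_*=\pm\sqrt{-x_0/(3t)}$ precisely when $x_0<0$. I decompose the integral dyadically in $|\xi|\sim\lambda$ and bound each band by either stationary or non-stationary phase. In the hyperbolic region, where $|\xi_*|\sim \xx^{1/2}/t^{1/2}$, the dyadic scale $\lambda\sim|\xi_*|$ is treated by stationary phase: with $|\phi''(\xi_*)|\sim t|\xi_*|$ and $\|v\|_{L^\infty}\lesssim 1$ this contributes $(t|\xi_*|)^{-1/2}\sim t^{-1/4}\xx^{-1/4}$, which is exactly the claimed rate. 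Away from the critical scale $|\phi'|\gtrsim|x_0|$ for $\lambda\ll|\xi_*|$ and $|\phi'|\gtrsim t\lambda^2$ for $\lambda\gg|\xi_*|$; one integration by parts using $\|v_\lambda\|_{L^\infty}\lesssim 1$ and $\|v_\lambda'\|_{L^2}\lesssim \lambda^{-1/2}$ gives a dyadic bound $\lesssim 1/\max(|x_0|,t\lambda^2)$. Dyadic summation yields a non-stationary total of order $\log(\xx/t^{1/3})/|x_0|$, which in the hyperbolic region is dominated by the stationary phase contribution since $\log R\lesssim R^{3/4}$ with $R=|x_0|/t^{1/3}$. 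In the self-similar region $|x_0|\lesssim t^{1/3}$ there is no genuine critical point; the dominant scale $\lambda\sim t^{-1/3}$ directly yields $\int|v_\lambda|\,d\xi\lesssim t^{-1/3}\sim t^{-1/4}\xx^{-1/4}$, while low and high frequencies combine geometrically to the same size. In the elliptic region $x_0\gtrsim t^{1/3}$ there is no critical point and $|\phi'|\gtrsim x_0$ globally; integration by parts contributes $1/x_0$ per dyadic scale, and summation over the $\sim\log(x_0/t^{1/3})$ relevant scales (from $\lambda\sim 1/x_0$ up to $\lambda\sim\sqrt{x_0/t}$) produces exactly the logarithmic enhancement $\log(t^{-1/3}\xx)/\xx$ appearing in the elliptic bound. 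The derivative estimate follows identically, the amplitude factor $\xi$ inserting $|\xi_*|\sim \xx^{1/2}/t^{1/2}$ at the stationary phase and $\sqrt{x_0/t}$ at the borderline scale in the elliptic region.

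The principal technical hurdle is the dyadic bookkeeping across the three regions, and in particular verifying that the logarithmic losses from non-stationary integration by parts in the hyperbolic and self-similar regions are absorbed by the stationary phase gain, while the analogous losses in the elliptic region survive with the correct constant. A subsidiary point is the Littlewood-Paley commutator $[L,P_\lambda] = [x,P_\lambda]$, whose Fourier symbol carries the favorable factor $\lambda^{-1}$, so the commutator errors enter at the same order as the main $L^2$ bound on $v'$ and create no further loss. Carrying out these estimates cleanly and uniformly across the three regions, for both $u$ and $u_x$, is the essential content of the proof.
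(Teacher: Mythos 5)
Your proposal is correct, but it takes a genuinely different route from the paper. You conjugate by the Airy phase, setting $v(\xi)=e^{-it\xi^3}\hat u(\xi)$ so that $\widehat{Lu}=ie^{it\xi^3}v'$, extract $\|v\|_{L^\infty}\lesssim 1$ from the two dyadic $L^2$ bounds by one-dimensional Sobolev embedding on bands of length $\sim\lambda$, and then run stationary/non-stationary phase on $\int v\,e^{i(x_0\xi+t\xi^3)}d\xi$; the dyadic bookkeeping you describe (logarithm absorbed by $\log R\lesssim R^{3/4}$ in the hyperbolic region, surviving as the stated $\ln(t^{-1/3}\xx)$ loss in the elliptic region) checks out, including the rates for $u_x$. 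The paper instead works entirely in physical space: it rescales to $t=1$, localizes to spatial dyadic regions $A_R$, proves elliptic $L^2$ bounds for $Lu$ at low frequency and for $u,u_x,u_{xx}$ at high frequency, and then treats $(x-3\partial_x^2)v=f$ as an ODE in $x$ --- a Gronwall argument on the energy $-x|v|^2+3|v_x|^2$ (with a low-frequency correction $-2f_{lo}v$) in the hyperbolic region, Sobolev embedding in the self-similar region, and a positivity/integration-by-parts argument in the elliptic region. The trade-off is explicit in the paper: the physical-space proof is deliberately ``robust'' so that it transfers almost verbatim to the nonlinear setting in Sections 5--6, where $L^{NL}u=xu-3tu_{xx}+9tu^2$ replaces $Lu$ and no exact Fourier representation is available; your oscillatory-integral proof is shorter and more classical for the linear statement but is tied to the exact solvability of the flow. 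One point to make explicit if you write this up: on the stationary band $|\xi|\sim|\xi_*|$ the van der Corput lemma requires control of $\|v\|_{L^\infty}+\|v'\|_{L^1(|\xi|\sim|\xi_*|)}$, not just the sup norm; the needed bound $\|v'\|_{L^1(|\xi|\sim\lambda)}\lesssim\lambda^{1/2}\cdot\lambda^{-1/2}=1$ follows from Cauchy--Schwarz and your $L^2$ bound on $v'$, so the ingredient is available, but it should be invoked there rather than only in the non-stationary regime.
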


There are two motivations for using these particular Sobolev norms.
One is linear, and is the fact that with this choice of spaces
the estimates in the above proposition and lemma are invariant with respect to scaling.

A second motivation will come from the nonlinear problem later on, and 
arises from the fact that, while all Sobolev norms are equally good 
for linear energy estimates, this is no longer the case for the nonlinear problem. There, it seems that the $\dot H^\frac12$ norm for the 
nonlinear counterpart $L^{\NL} u$ of $Lu$ is the only one we have access to.

\begin{proof}
We first take advantage of the observation that our bounds in the lemma are 
invariant with respect to scaling in order to rescale the problem and set $t = 1$  and we omit $t$ in the notation.
This will not make a major difference, but simplify the computations somewhat.

We will split the real line into the self-similar region $S$, which after scaling is $S = \{ |x| \lesssim 1 \}$ (would be $ = \{ |x| \lesssim t^\frac13 \}$ in general),
the elliptic region $E = \{ x \gg 1 \}$ and the hyperbolic region $H = \{ -x \gg 1\}$. Furthermore, we split the last two regions into dyadic components.
We begin with some elliptic $L^2$ bounds in dyadic regions
$A_R = \{ \xx \approx R \gtrsim 1\}$. By a slight abuse we also denote 
$A_1 = \{ \xx \lesssim 1\}$.
To address some of the issues arising from our 
use of the $\dot B^{-\frac12}_{2,\infty}$ and $\dot H^{\frac12}$ norms, we 
start our analysis with some elliptic bounds. 

\medskip
\textbf{A. A low frequency bound.} The $\dot H^\frac12$ bound for $Lu$ does not see 
the constants in $Lu$. More quantitatively, in a dyadic region $A_R$ functions at frequencies 
below $1/R$ are indistinguishable from constants. In order to be able to localize our estimates
to dyadic scales, it is essential to be able to better estimate these low frequencies in $Lu$.
Precisely, we prove that
\begin{lemma} \label{l:Lu-ell}
Assume that \eqref{lin-L2} holds. Then
\begin{equation}\label{Lu-lo-lin}
\| L u\|_{L^2(A_R)} \lesssim R^\frac12.
\end{equation}
\end{lemma}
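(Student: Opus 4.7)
The plan is to decompose $Lu$ in frequency at the threshold $\mu = 1/R$ naturally associated with the spatial scale of $A_R$, and treat the high and low frequency pieces separately; the two hypotheses in \eqref{lin-L2} are complementary in exactly this way. Since we are rescaled to $t=1$, I will write $Lu = xu - 3u_{xx}$ on the physical side. Roughly, the $\dot H^{1/2}$ bound on $Lu$ controls frequencies $\gtrsim 1/R$, while the $\dot B^{-1/2}_{2,\infty}$ bound on $u$ is what is needed to pin down the low-frequency part of $Lu$, which is essentially constant on scale $R$ and so invisible to $\dot H^{1/2}$.

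For the high-frequency piece $P_{\geq 1/R} Lu$, a direct Littlewood--Paley computation,
\[
\|P_{\geq 1/R} Lu\|_{L^2(\R)}^2 \;=\; \sum_{\lambda \geq 1/R}\|P_\lambda Lu\|_{L^2}^2 \;\leq\; R \sum_\lambda \lambda \|P_\lambda Lu\|_{L^2}^2 \;\lesssim\; R\, \|Lu\|_{\dot H^{1/2}}^2 \;\leq\; R,
\]
gives the required $L^2(A_R)$ bound of $R^{1/2}$ for free, in fact globally on $\R$.

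For the low-frequency piece $P_{<1/R} Lu$, the plan is to show $\|P_{<1/R} Lu\|_{L^\infty(A_R)} \lesssim 1$ and then multiply by $|A_R|^{1/2}\sim R^{1/2}$ to conclude. Here it matters that we are \emph{on} $A_R$, where $|x|\lesssim R$, because one of the two terms of $Lu$ is $xu$. The $u_{xx}$ contribution is routine: Bernstein together with $\|P_\lambda u\|_{L^2}\lesssim \lambda^{1/2}$ (from the Besov hypothesis) gives $\|P_\lambda u_{xx}\|_{L^\infty}\lesssim \lambda^3$, which sums to $O(R^{-3})$ over $\lambda<1/R$. For $P_{<1/R}(xu)$ I would split via the commutator,
\[
P_{<1/R}(xu) \;=\; x\,P_{<1/R} u \;+\; [P_{<1/R},x]\,u,
\]
bounding the first term by $|x|\cdot\|P_{<1/R} u\|_{L^\infty}\lesssim R\cdot R^{-1}=1$ on $A_R$ (using Bernstein once more). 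The commutator has convolution kernel $-z\,\phi_{1/R}(z)$ of $L^1$ norm $\sim R$ but still Fourier-supported at frequencies $\lesssim 1/R$, so it acts essentially as a renormalized low-frequency projection and again produces an $L^\infty$ bound of size $\sim R \cdot R^{-1} = 1$.

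The main obstacle, and the reason the lemma is needed at all, is precisely this low-frequency piece: the $\dot H^{1/2}$ hypothesis is blind to constants, so $\|Lu\|_{L^2(A_R)}$ cannot be estimated by the $\dot H^{1/2}$ norm alone; the $\dot B^{-1/2}_{2,\infty}$ bound on $u$ is what recovers the missing $O(1)$ average of $Lu$ on each dyadic region, and it does so only once one exploits both the localization $|x|\lesssim R$ and the built-in frequency projection carried by the commutator $[P_{<1/R},x]$. The remaining case $R\lesssim 1$ is handled identically with $\mu=1$; there $A_1$ already has bounded measure and no further subtlety arises.
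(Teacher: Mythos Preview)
Your proof is correct and essentially the same as the paper's: both split at frequency $R^{-1}$, control $P_{\gtrsim R^{-1}} Lu$ directly by the $\dot H^{1/2}$ hypothesis, and handle the low-frequency remainder using the Besov bound on $u$ together with the commutator $[P_{<R^{-1}},x]$. The only cosmetic differences are that the paper organizes the split on $u$ rather than on $Lu$ (writing $Lu_{\gtrsim R^{-1}} = P_{\gtrsim R^{-1}} Lu + [x,P_{\gtrsim R^{-1}}]u$) and bounds the low-frequency piece directly in $L^2(A_R)$ rather than passing through $L^\infty$ and multiplying by $|A_R|^{1/2}$.
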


\begin{proof} To prove this, we split $u$ at the frequency cut-off $R^{-1}$,
\[
u = u_{< R^{-1}} + u_{\gtrsim R^{-1}}.
\]
Then use the Besov bound on $u$ to compute
\[
\|  L  u_{<R^{-1}} \|_{L^2(A_R)} \lesssim R \| u_{<R^{-1}} \|_{L^2(\mathbb{R})} \lesssim R^\frac12.
\]
On the other hand
\[
L  u_{\gtrsim  R^{-1}} = P_{\gtrsim R_{-1}} Lu + [L, P_{\gtrsim R^{-1}}] u =  P_{\gtrsim R_{-1}} Lu + [x,P_{\gtrsim R^{-1}}] u,
\]
and the conclusion follows since the commutator is a Fourier multiplier of size $R$ supported near $|\xi| \sim R^{-1}$.
\end{proof}

\medskip

\textbf{B. A high frequency bound.} The balance of the two terms in $Lu$ indicates that 
the bulk of $u$ in $A_R$ is localized at frequency $R^\frac12$. In the next lemma
we take advantage of this balance in order to improve the regularity of $u$ in $A_R$ at 
high frequency $ > R^{\frac12}$:
\begin{lemma} \label{l:u-ell}
Assume that \eqref{lin-L2} holds. Then
\begin{equation}\label{u-hi-lin}
\|u\|_{L^2(A_R)} \lesssim R^{\frac14}, \qquad
\|u_x\|_{L^2(A_R)} \lesssim R^\frac34, \qquad
\|u_{xx}\|_{L^2(A_R)} \lesssim R^\frac54.
\end{equation}
\end{lemma}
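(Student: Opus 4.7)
The three bounds are linked by the identity $-3u_{xx} = Lu - xu$ together with $|x| \lesssim R$ on $A_R$. Once $\|u\|_{L^2(A_R)} \lesssim R^{\frac14}$ is established, Lemma~\ref{l:Lu-ell} yields
\[
\|u_{xx}\|_{L^2(A_R)} \lesssim \|Lu\|_{L^2(A_R)} + R\|u\|_{L^2(A_R)} \lesssim R^{\frac12} + R^{\frac54} \lesssim R^{\frac54},
\]
and the intermediate $u_x$ bound then follows from the standard cutoff interpolation $\|\chi u_x\|_{L^2}^2 \lesssim \|\chi u\|_{L^2}\|\chi u_{xx}\|_{L^2} + \text{(cutoff derivative corrections)}$ applied to a cutoff $\chi$ adapted to $A_R$. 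So the task reduces to proving the $L^2$ bound on $u$.

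For this I would split $u = u_{lo} + u_{hi}$ via a Littlewood--Paley projection at Fourier threshold $\sim R^{\frac12}$, say $u_{lo} = P_{\leq c R^{\frac12}} u$. The low-frequency piece is controlled globally by Bernstein on the Besov assumption: summing dyadic pieces up to frequency $R^{\frac12}$ gives
\[
\|u_{lo}\|_{L^2(\mathbb{R})}^2 \lesssim \sum_{2^k \lesssim R^{\frac12}} \|P_k u\|_{L^2}^2 \lesssim \sum 2^k \lesssim R^{\frac12},
\]
hence $\|u_{lo}\|_{L^2(\mathbb R)} \lesssim R^{\frac14}$. This part alone saturates the desired estimate.

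For the high-frequency piece $u_{hi} = P_{> c R^{\frac12}} u$ I would prove the considerably stronger localized bound $\|u_{hi}\|_{L^2(A_R)} \lesssim R^{-\frac12}$ via elliptic regularity. The symbol of $L = x - 3\partial_x^2$ is $x + 3\xi^2$, which on the phase-space region $A_R \times \{|\xi| \geq c R^{\frac12}\}$ is uniformly of size $\gtrsim R$ once $c$ is large enough (even on the hyperbolic side $A_R^-$ where $x \approx -R$, the $3\xi^2$ term dominates). Thus $L$ is microlocally elliptic of order $R$ on this region, and a standard argument --- testing $\int \chi^2 |L u_{hi}|^2$ against an $A_R$-adapted cutoff $\chi$ and integrating by parts to recover $R^2 \|\chi u_{hi}\|^2$ plus a small multiple of $\|\chi \partial_x u_{hi}\|^2$ and $\|\chi \partial_x^2 u_{hi}\|^2$ --- yields
\[
R\|\chi u_{hi}\|_{L^2} \lesssim \|L u_{hi}\|_{L^2(\tilde A_R)} + \text{(lower-order cutoff errors)},
\]
and Lemma~\ref{l:Lu-ell} together with a commutator estimate bounds the right-hand side by $R^{\frac12}$.

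The main obstacle lies in handling the commutators cleanly. The projection $P_{hi}$ does not commute with multiplication by $x$, producing $[L, P_{hi}] = [x, P_{hi}]$, which is a Fourier multiplier of amplitude $R^{-\frac12}$ concentrated at $|\xi| \sim R^{\frac12}$; its action on $u$ is controlled by the Besov bound applied at frequency $R^{\frac12}$, giving a contribution of size $R^{-\frac14}$. Meanwhile the spatial cutoff $\chi$ produces $[L,\chi] = -3\chi'' - 6\chi' \partial_x$, whose support lies on the dyadic shells adjacent to $A_R$; this is absorbed by a standard bootstrap on a nested family of cutoffs, using that the conclusion on each shell then feeds into the neighbouring one. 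Finally, the integration-by-parts coercivity that drives the elliptic estimate is genuinely delicate on $A_R^-$, where the $\int x u^2$ term has the wrong sign; it is precisely the high-frequency localization $|\xi| \geq c R^{\frac12}$ that lets the $\int u_x^2$ and $\int u_{xx}^2$ terms outweigh the indefinite $\int x u^2$ contribution.
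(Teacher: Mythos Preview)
Your overall strategy---split at frequency $R^{1/2}$, handle the low piece by the Besov bound, and use ellipticity of $L$ on the high piece---matches the paper, and your reduction of the $u_x$, $u_{xx}$ bounds to the $u$ bound via the equation and interpolation is a clean simplification that the paper does not make. The divergence is in how the high-frequency estimate is carried out, and there your argument has a genuine gap.

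You treat all frequencies $|\xi| > cR^{1/2}$ at once and propose a coercivity argument based on $\int \chi^2 |Lu_{hi}|^2$. After integrating by parts the commutator $[L,\chi]u_{hi}$ produces a term $-6\chi' \partial_x u_{hi}$, so closing the estimate requires control of $\|\partial_x u_{hi}\|_{L^2}$ on the support of $\chi'$. But the hypotheses give no such a~priori bound: the Besov norm yields only $\|P_\lambda u_x\|_{L^2} \lesssim \lambda^{3/2}$, which diverges when summed over $\lambda > cR^{1/2}$, and the $\dot H^{1/2}$ bound on $Lu$ does not help either. Your ``bootstrap on a nested family of cutoffs'' therefore has no starting point---each shell estimate feeds into the next with no anchor, and the recursion does not close uniformly in $u$. (A minor side remark: in the expansion of $\int(Lu_{hi})^2$ on $A_R^-$ the indefinite term is $6\int x\,u_{hi,x}^2$, not $\int x\,u_{hi}^2$; the $u_{hi}^2$ term carries the coefficient $x^2 \geq 0$.)

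The paper avoids this by working dyadically at each frequency $\lambda > R^{1/2}$, where the \emph{global} Besov bound $\|u_\lambda\|_{L^2}\lesssim \lambda^{1/2}$ is available as a starting input. It then uses the simpler first-order identity
\[
\int \chi_R |u_{\lambda,x}|^2 = \tfrac13\!\int \chi_R u_\lambda\, Lu_\lambda + \int\bigl(\tfrac12\chi_R'' - \tfrac13 x\chi_R\bigr)|u_\lambda|^2,
\]
which yields a local bound on $u_{\lambda,x}$ directly from the global $L^2$ bounds on $u_\lambda$ and $Lu_\lambda$. The local bound on $u_\lambda$ is then recovered by applying the frequency-localized antiderivative $\partial_{x,\lambda}^{-1}$ (whose kernel decays on scale $\lambda^{-1}\ll R$, so the commutator with $\chi_R$ is harmless), and one iteration of this pair of steps gives $\|u_\lambda\|_{L^2(A_R)}\lesssim R\lambda^{-3/2}$, summable over $\lambda$. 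The dyadic decomposition is what supplies the missing a~priori input that your lumped $u_{hi}$ lacks.
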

\begin{proof} The bound for the low frequencies of $u$ (i.e.\ below $R^\frac12$) follows directly from the Besov bound in \eqref{lin-L2}, irrespective of the spatial localization.
Hence it suffices to consider the high frequencies of $u$, $\lambda \gg R^\frac12$. For these we have 
\begin{equation}\label{eq:lul}
L  u_\lambda = P_{\lambda} Lu + \lambda^{-1} u_\lambda ,
\end{equation}
where, by a slight abuse of notation, the $u_{\lambda}$ on the right stands for
a generic frequency $\lambda$ unit multiplier applied to $u$.
Thus, using again \eqref{lin-L2},  we compute
\begin{equation}\label{lul-ul}
\| L u_\lambda \|_{L^2} \lesssim \lambda^{-\frac12}, 
\qquad
\| u_\lambda \|_{L^2} \lesssim \lambda^\frac12 . 
\end{equation}

To obtain the bound on the derivative, we integrate by parts in $A_R$ to get
\[
 \int_{\mathbb{R}} \chi_R |u_{\lambda, x}|^2 \, dx= \int_{\mathbb{R}} \frac{1}{3}\chi_R u_\lambda Lu_\lambda + 
\left (\frac{1}{2} \chi''_R -\frac{1}{3} x \chi_R \right ) |u_\lambda|^2\, dx,
\]
which yields the preliminary bound
\[
\| u_{\lambda,x} \|_{L^2(A_R)}^2 \lesssim  R \lambda.
\]
We now express $u_\lambda$ in terms of $u_{\lambda,x}$, and localize,
\[
\chi_R u_\lambda = \chi_R \partial^{-1}_{x,\lambda} u_{\lambda,x}
= \partial^{-1}_{x, \lambda} \left( \chi_R u_{\lambda,x}\right)  - [\partial^{-1}_{x, \lambda}, \chi_R]  u_{\lambda, x},
\]
where the antiderivative $\partial^{-1}_{x, \lambda}$ is localized at  frequency $\lambda$.
The integral kernel decays polynomially away from the diagonal, which suffices to add up the contributions from the areas $A_{R'}$. This yields a local bound for $u_\lambda$,
\[
\|u_{\lambda} \|_{L^2(A_R)} \lesssim R^\frac12 \lambda^{-\frac12}.
\]

Repeating the argument above we then have 
\begin{equation}
\|u_{\lambda,x}\|_{L^2(A_R)} \lesssim  R \lambda^{-\frac12},
\end{equation}
and further 
\begin{equation}
\|u_{\lambda}\|_{L^2(A_R)} \lesssim  R \lambda^{-\frac32}.
\end{equation}
By \eqref{eq:lul} and \eqref{lul-ul} we can easily obtain the last bound 
\[
\|u_{\lambda,xx}\|_{L^2(A_R)} \lesssim \lambda^{-\frac12} +  R^2 \lambda^{-\frac32},
\]

and hence the proof is complete.
\end{proof}

\medskip

\textbf{C. Localization}.
Here we use the elliptic bounds in \textbf{A}, \textbf{B} to conclude that we can
localize the problem to the region $A_R$ simply by
replacing $u$ by $v := \chi_R u$. All the norms here are restricted to the region $A_R$. Hence, we will for example, write $L^2$ instead of $L^2(A_R)$, just for the sake of simplicity. We will use this notation throughout this section (i.e.\ in paragraphs \textbf{D}, \textbf{E}, and \textbf{F} within this section).

 Here $v$ solves an equation of the form
\[
(x - 3 \partial_x^2 ) v = f 
\]
in $A_R$, where we control 
\begin{equation}\label{v-gen}
\|v\|_{L^2(A_R)} \lesssim R^{\frac14}, \qquad
\|v_x\|_{L^2(A_R)} \lesssim R^\frac34, \qquad \|v_{xx}\|_{L^2(A_R)} \lesssim R^\frac54.
\end{equation}
and, with $f$ supported in $A_R$, 
\begin{equation}\label{f-gen}
\| f \|_{\dot H^\frac12 \cap R^\frac12 L^2} \lesssim 1.
\end{equation}
\medskip 

\textbf{D. Pointwise estimate in the hyperbolic region.}
Here we consider the region $A_R^H$ to the left of the origin, and use 
hyperbolic energy estimates to establish the desired pointwise 
bound for $v$ supported in $A_R^H$. 

Here the primary frequency is $\lambda = R^\frac12$,
but $f$ is worse at lower frequency than at higher frequencies, and we need to account for this.
For expository purposes assume at first that this is not the case, 
i.e.\ that $f$ simply satisfies the low frequency bound
\[
\| f\|_{L^2} \lesssim R^{-\frac14}.
\]
Then we simply treat the $v$ equation as a hyperbolic evolution equation
and use an energy estimate,
\[
\frac{d}{dx} \left( -x |v|^2 + 3 |v_x|^2\right) = -|v|^2 - 2 f v_x, 
\]
and then apply Gronwall's inequality on the $R$ dyadic region
to obtain the pointwise bound
\[
\sup_{x \in A_R^H} \left(-x|v|^2  + 3 |v_x|^2\right) \lesssim \|v_x\|_{L^2} \|f\|_{L^2}\lesssim R^{\frac{3}{4}}\cdot R^{-\frac{1}{4}}  \lesssim R^\frac12,
\]
which suffices.

Consider now the situation in ~\eqref{f-gen}, where a direct estimate of $f v_x$ 
would yield logarithmic losses in the dyadic frequency summation. To avoid those
we use the frequency scale $R^\frac12$ to split
\[
f = f_{\lo} + f_{\hi},
\]
and correspondingly
\[
f v_x = f_{\lo} v_x + f_{\hi} v_x = - f_{lo,x} v + f_{\hi} v_x + \partial_x( f_{\lo} v),
\]
where $f_{\lo}:=\tilde{\chi}_{R}f_{< R^{\frac{1}{2}}}$, and $f_{\hi}:=\tilde{\chi}_{R}f_{\geq R^{\frac12}}$. Here $\tilde{\chi}_R$ is also a characteristic function similar to $\chi_R$ but with a larger support than $\chi_R$.

 Now we view the last term as an energy correction,
\[
\frac{d}{dx} \left(-x |v|^2 + 3 |v_x|^2  -2 f_{\lo} v\right) = -|v|^2 - 2 f_{\hi} v_x - 2 f_{lo,x} v,
\]
and using Gronwall's inequality again we obtain
\[
\sup_{x \in A_R^H} \left(-x |v|^2 + 3 |v_x|^2\right)  \lesssim R^\frac12 + \sup  |f_{\lo} v|.
\]
For $f_{\lo}$ we get from \eqref{f-gen} by Bernstein's inequality 
\[
\| f_{\lo} \|_{L^\infty} \lesssim (\ln R)^\frac12,
\]
where the $\ln$ loss arises from the dyadic summation in the frequency range 
\[ 
\{ R^{-1} \leq |\xi| \leq R^\frac12\}.
\]
 This again leads to the desired bound
\[
\sup_{x\in A^r_h} \left( -x |v|^2 + 3 |v_x|^2\right)  \lesssim R^\frac12.
\]

\medskip 

\textbf{E. Pointwise estimate in the self-similar region.}

This follows from \eqref{v-gen} and Sobolev embeddings.

\medskip 

\textbf{F. Pointwise estimate in the elliptic region.}

Here we split again $f = f_{\lo} + f_{\hi}$.
The leading part of $v$ will then be $x^{-1} f_{\lo}$.
Subtracting that, we are left with
\[
v_1 := v - x^{-1} f_{\lo},
\]
which solves
\[
L v_1 = f_1:= f_{\hi} + 3\partial_x^2 (x^{-1} f_{\lo}).
\]
Here we can easily estimate $f_1$ using \eqref{f-gen},
\[
\| f_1 \|_{L^2} \lesssim R^{-\frac14}.
\]
This allows us to estimate integrating by parts in the following identity
\[
\begin{aligned}
\int_{\mathbb{R}}v_1Lv_1\, dx =\int_{\mathbb{R}} f_1v_1\, dx 
\end{aligned}\]
and arrive at
\[
\int_{\mathbb{R}}x\vert v_1\vert ^2\, dx +3\int_{\mathbb{R}}\vert v_{1,x}\vert ^2\, dx =\int_{\mathbb{R}}f_1v_1\, dx.
\]
Using Cauchy-Schwartz inequality implies 
\[
R\Vert v_1\Vert_{L^2}^2 +3\Vert v_{1,x}\Vert^2_{L^2}\lesssim \Vert f_1\Vert _{L^2}\Vert v_1\Vert_{L^2} ,
\]
which further leads to
\begin{equation}
\label{bounds needed}
R\Vert v_1\Vert_{L^2}^2 +3\Vert v_{1,x}\Vert^2_{L^2}\lesssim R^{-1}\Vert f_1\Vert^2 _{L^2}.
\end{equation}
Thus, using the bound on $f_1$, we arrive at 
\[
\| v_1\|_{L^2} \lesssim R^{-\frac54}, \qquad 
\| \partial_x v_1 \|_{L^2} \lesssim R^{-\frac34},
\]
and further using the $v_1$ equation,
\[
\| \partial_x^2 v_1 \|_{L^2} \lesssim R^{-\frac14}.
\]
Now we can obtain pointwise bounds for $v_1$
by Sobolev embeddings,
\[
|v_1| \lesssim R^{-1}, \qquad |\partial_x v_1| \lesssim R^{-\frac12}. 
\]
This is exactly as needed. On the other hand for the 
$x^{-1}f_{\lo}$ we proceed as we did before, and we use Bernstein's inequality, in order to obtain a 
similar bound but with a log loss.
\end{proof}

\newsection{The nonlinear quartic result}
\label{s:nonlin}

In this section we describe the main building blocks in the proof
of Theorem~\ref{t:quartic}, and show how these can be used to conclude 
the proof of Theorem~\ref{t:quartic}.

The proof of the result is based on energy estimates. The difficulty
is that we need to take full advantage of the nonresonant structure
of the equation. Primarily, in our setting we expect resonant
interactions to primarily occur in the self-similar region $\{ |x|
\lesssim t^{\frac13} \}$, which corresponds to frequencies $\lesssim t^{-\frac13}$.

Following the pattern in the linear analysis in Section~\ref{s:lin}, one of our 
energy estimates will be for $u$. The second energy estimate in the linear case is for $Lu$. Unfortunately, in the nonlinear case $Lu$ no longer solves a good equation, so we will seek a nonlinear replacement for it $L^{\NL} u$.
In view of the scaling symmetry, one solution for the linearized equation 
\begin{equation}\label{linearized}
z_t + z_{xxx} = 6  \partial_x ( u z)
\end{equation}
is provided  by the function
\[
z = \partial_x ( x u - 3 t u_{xx} + 9 t u^2) + u.
\]
However, given our initial data assumption and the linear estimates in Section~\ref{s:lin} we would rather like to work at the
level of $\partial^{-1} z$. If $z$ solves \eqref{linearized} then $w:=\partial^{-1} z$ formally solves
 the adjoint linearized equation
\begin{equation}\label{adj-linearized}
w_t + w_{xxx} = 6( u w_x).
\end{equation} 

However, working with $\partial^{-1} u$ does not seem like a good idea 
unless we assume that the function has zero average, i.e.\ that following equality holds
\[
\displaystyle\int_{\mathbb{R}} u\, dx  = 0.
\]
Because of that, we will work instead with the function
\[
w = L^{\NL} u : = x u - 3 t u_{xx} + 9 t u^2.
\]
This in turn solves an inhomogeneous adjoint linearized equation
\begin{equation}\label{LNL-eq}
w_t + w_{xxx} = 6( u w_x) + 3 u^2.
\end{equation}

To start with, we recall the bounds we seek to prove, namely 
\begin{equation}\label{point-re}
|u(t,x)| \lesssim \epsilon t^{-\frac14} \xx^{-\frac14},
\qquad |u_x(t,x)| \lesssim \epsilon t^{-\frac34} \xx^\frac14.
\end{equation}
Our proof will be a nonlinear version of the linear argument
in Section~\ref{s:lin}, but organized as a bootstrap argument. 

We will work with solutions in a time interval $[0,T]$, where $T$ 
will be chosen later. Our main bootstrap assumption will be
\begin{equation}\label{point-boot}
|u(t,x)| \leq M \epsilon t^{-\frac14} \xx^{-\frac14},  
\qquad |u_x(t,x)| \leq M \epsilon t^{-\frac34} \xx^\frac14 , \qquad
t \in [0,T],
\end{equation}
where $M \gg 1$ is a large universal constant also to be chosen later.
We will use this in order to both prove the desired conclusions
and to improve the bootstrap bounds \eqref{point-boot}. For this to work
the constant $M$ is chosen first, and then $T$ is chosen small enough depending on $M$, 
\begin{equation} \label{T-choice}
T \ll_M \epsilon^{-3}.    
\end{equation}

Given this set-up, our proof has four main steps:

\bigskip

\textbf{I. Uniform energy estimates for $u$.}
Here no bootstrap assumption is necessary, and the 
main bound is translation invariant. To motivate the norms we 
will use, we start with the homogeneous Besov space $\dot B^{-\frac12}_{2,\infty}$ which is the best we can do for the initial data
at low frequency. Tracking the time evolution of this homogeneous norm seems difficult
at low frequency, so instead we will seek to replace it with an inhomogeneous norm below a well chosen threshold frequency.

To motivate the choice of the threshold frequency, we start by observing that up to time $t$, frequencies below $t^{-\frac13}$ in $u$ do not have any interesting linear KdV dynamics. Because of that, it seems 
wasteful to use the homogeneous Besov norm below this scale. In our case,
the quartic lifespan corresponds to $t \leq \epsilon^{-3}$, so the above frequency threshold is exactly  $\epsilon$. Based on that, we define the inhomogeneous Besov space
$B^{-\frac12,\epsilon}_{2,\infty}$, 
where we make the norm inhomogeneous below frequency $\epsilon$,
\[
B^{-\frac12,\epsilon}_{2,\infty} :=  \dot B^{-\frac12}_{2,\infty} +  \epsilon^{\frac12} L^2,
\]
or equivalently
\[
\| u \|_{B^{-\frac12,\epsilon}_{2,\infty}} = \inf_{u = u_1+ \epsilon^\frac12 u_2} \| u_1\|_{\dot B^{-\frac12}_{2,\infty}}
+ \| u_2\|_{L^2},
\]
where the two components are matched exactly at frequency $\epsilon^\frac12$.

Then our uniform energy estimate is as follows:

\begin{proposition}\label{p:energy}
Assume the solution $u$ to KdV equation \eqref{kdv} has initial data $u_0$ so that
\[
\| u_0\|_{ B^{-\frac12,\epsilon}_{2,\infty}} \leq \epsilon .
\]
Then 
\begin{equation}\label{energy}
 \sup_{t\in \R} \|u(t)\|_{ B^{-\frac12,\epsilon}_{2,\infty}}  \lesssim  \epsilon.
\end{equation}
\end{proposition}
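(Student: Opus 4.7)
The plan is to derive Proposition~\ref{p:energy} from the low-regularity almost-conservation laws for KdV developed by Koch--Tataru \cite{KT} and Killip--Visan \cite{MR3820439}, which the introduction has already singled out as the key input. Those works produce, for small data, a one-parameter family of conserved functionals equivalent to $\dot H^s$-type norms in the range $s\ge -1$, together with frequency-localized refinements obtained by inserting a spectral parameter into the transmission coefficient of the Lax operator $-\partial_x^2+u$. The task is to repackage these statements as an invariance of the Besov-type space $B^{-\frac12,\epsilon}_{2,\infty}$.

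First I would exploit the KdV scaling $u(t,x)\mapsto \lambda^2 u(\lambda^3 t,\lambda x)$ with $\lambda=\epsilon$ to normalize $\epsilon=1$: the homogeneous Besov norm $\dot B^{-\frac12}_{2,\infty}$ is scale-invariant, and the threshold frequency rescales to $1$, so the proposition is equivalent to the propagation of a unit-size bound in the space $\dot B^{-\frac12}_{2,\infty}+L^2$. In this normalization, the smallness required to apply the conservation laws of \cite{KT,MR3820439} is exactly the hypothesis.

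Next I would decompose dyadically, $u = P_{\le 1}u + \sum_{\lambda\ge 2} P_\lambda u$, and control each frequency band separately. For the low-frequency piece $P_{\le 1}u$ I would extract from the modified-energy construction a conserved functional equivalent to $\|P_{\le 1}u\|_{L^2}^2$ up to a cubic error coming from interactions with high-frequency data; by Cauchy--Schwarz combined with the smallness of the Besov norm this error is $O(\|u\|_{B^{-\frac12,\epsilon}_{2,\infty}}^3)$ and hence negligible. For each dyadic $\lambda\ge 2$, I would use the frequency-localized modified energy $E_\lambda$ obeying $E_\lambda(u) \sim \lambda^{-1}\|P_\lambda u\|_{L^2}^2$ (again up to cubic corrections of the same perturbative type) together with $\tfrac{d}{dt}E_\lambda(u)=0$, and deduce the dyadic bound $\|P_\lambda u(t)\|_{L^2}\lesssim \lambda^{1/2}$ uniformly in time.

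The main technical obstacle I foresee is verifying uniformity in the dyadic parameter $\lambda$: the equivalence constants in $E_\lambda(u)\sim \lambda^{-1}\|P_\lambda u\|_{L^2}^2$ and the smallness of the cubic corrections are both controlled by $\|u\|_{B^{-\frac12,\epsilon}_{2,\infty}}$, but one must check that they do not degenerate either as $\lambda\to\infty$ or at the transition $\lambda\sim 1$ where the inhomogeneous low-frequency regime meets the homogeneous high-frequency regime. Once this uniformity is in place, taking the supremum over $\lambda\ge 1$ and combining with the low-frequency $L^2$ bound yields $\|u(t)\|_{B^{-\frac12,\epsilon}_{2,\infty}}\lesssim 1$, which in the original variables is the desired estimate \eqref{energy}.
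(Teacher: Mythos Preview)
Your strategy---reducing to the conservation laws of \cite{KT,MR3820439}---is the paper's, but the implementations differ. You propose frequency-localized conserved energies $E_\lambda \sim \lambda^{-1}\|P_\lambda u\|_{L^2}^2$ and flag uniformity in $\lambda$ as the chief obstacle. The paper sidesteps both issues via the KdV scaling: with $u_\lambda(x,t)=\lambda^{-2}u(x/\lambda,t/\lambda^3)$ one has the norm equivalence
\[
\|u\|_{B^{-\frac12,\epsilon}_{2,\infty}} \approx \sup_{\lambda>\epsilon}\lambda\,\|u_\lambda\|_{H^{-1}},
\]
and each $u_\lambda$ is again a KdV solution, small in $H^{-1}$ at time zero. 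A single application of the conserved $H^{-1}$ functional to each rescaled flow then gives the result; uniformity in $\lambda$ is automatic because scaling is an exact symmetry, and the low/high transition you worry about is absorbed into the cutoff $\lambda>\epsilon$ in the supremum. Your route can also be made to work, but note a small imprecision: the spectral-parameter quantities in \cite{KT,MR3820439} are equivalent to weighted norms $\int(\xi^2+\kappa^2)^{-1}|\hat u(\xi)|^2\,d\xi$ rather than to sharp dyadic pieces $\lambda^{-1}\|P_\lambda u\|_{L^2}^2$; the supremum over $\kappa$ still recovers the Besov norm, and this is exactly what the scaling reformulation packages cleanly.
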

This result is derived in Section~\ref{s:energy} from the 
$H^{-1}$ conservation law for KdV obtained in \cite{KT} (see also 
the earlier bounds in \cite{MR3400442} and the bounds
in \cite{MR3820439}).

\bigskip

\textbf{II. $\dot H^{-\frac12}$ bounds for the linearized equation.}
The main result in this step is as follows:
\begin{proposition}\label{p:linearized}
Let $u$ be a solution to the KdV equation \eqref{kdv} in a time interval $[0,T]$
which satisfies the smallness assumption \eqref{small} 
for the initial data, as well as the bootstrap assumption
\eqref{point-boot}. 
Assume that $T$ is as in \eqref{T-choice}.
Then the linearized equation \eqref{linearized} is well-posed in $\dot H^{-\frac12}$
with uniform bounds
\begin{equation}\label{linest12}
\|w(t) \|_{\dot H^{-\frac12}} \approx \|w(0)\|_{\dot H^{-\frac12}}, 
 \qquad  t \in [0,T].
\end{equation}
\end{proposition}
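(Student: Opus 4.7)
My plan is to prove \eqref{linest12} via an almost-conservation law for the energy $E(t):=\|z(t)\|^2_{\dot H^{-1/2}}$ along the linearized flow \eqref{linearized}, followed by a Gronwall argument over the quartic lifespan. Pairing the equation $z_t = -z_{xxx}+6\partial_x(uz)$ with $|D|^{-1}z$ gives
\begin{equation*}
\frac{d}{dt}E(t)=2\int\bigl(-z_{xxx}+6\partial_x(uz)\bigr)\,|D|^{-1}z\,dx.
\end{equation*}
The dispersive contribution vanishes by antisymmetry: in Fourier the integrand $\xi^{3}/|\xi|\cdot|\hat{z}(\xi)|^{2}$ is an odd-times-even function. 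What remains is the trilinear form
\begin{equation*}
\frac{d}{dt}E(t)=-12\int u\,z\,\mathcal{R}z\,dx,\qquad \mathcal{R}:=\partial_x|D|^{-1}.
\end{equation*}
The reduced task is to bound this by $G(t)\,E(t)$ with $\int_1^{\epsilon^{-3}}G(t)\,dt\lesssim 1$.

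Next I would Littlewood--Paley decompose $u=\sum_\lambda u_\lambda$ and $z=\sum_\mu z_\mu$. Frequency balance eliminates configurations where one frequency dominates the sum of the other two, leaving two nontrivial regimes. In the low-high-high regime $\lambda\ll\mu\approx\nu$, apply the symmetrization
\begin{equation*}
\int u\,z_\mu\,\mathcal{R}z_\nu\,dx \;=\; \tfrac12\!\int u\,\mathcal{R}(z_\mu z_\nu)\,dx + \tfrac12\!\int u\,[z_\mu,\mathcal{R}]z_\nu\,dx,
\end{equation*}
and integrate by parts in the first term to move a derivative onto $u$; this yields a bound by $\|u_x\|_{L^\infty}E(t)$, while the commutator is of order $-1$ relative to $z_\mu$ and contributes $\|u\|_{L^\infty}E(t)$ with an additional gain $\lambda/\mu$. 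In the high-high-low regime $\lambda\approx\mu\gg\nu$, the low-frequency factor $z_\nu$ is handled in $L^\infty$ by Sobolev embedding, and the remaining bilinear pairing between $u_\lambda$ and $z_\mu$ is closed using the Besov energy bound of Proposition~\ref{p:energy} to pay for the extra derivative.

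Inserting the bootstrap assumption \eqref{point-boot} gives $\|u_x\|_{L^\infty}\lesssim M\epsilon\,t^{-2/3}$ in the worst (self-similar) region and better in the elliptic region, hence
\begin{equation*}
\int_1^{\epsilon^{-3}}G(t)\,dt \;\lesssim\; M\epsilon\!\int_1^{\epsilon^{-3}}\!t^{-2/3}\,dt \;\lesssim\; M,
\end{equation*}
which is $O(1)$ in $\epsilon$, and Gronwall then produces the two-sided estimate \eqref{linest12}.

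The main obstacle is the resonant block $\lambda\approx\mu\approx\nu\approx t^{-1/3}$: here the commutator gain $\lambda/\mu$ is of order one and the pure $L^\infty$ bound on $u$ is saturated. The fix, mirroring the $f_{lo}\,v$ correction in step \textbf{D} of Section~\ref{s:lin}, is a cubic normal-form modification $E(t)\mapsto E(t)+\int c(u)\,z\,|D|^{-1}z\,dx$, where the symbol $c(u)$ is chosen so that the time derivative of the correction cancels the resonant trilinear contribution modulo quintic terms; the latter are harmless under \eqref{point-boot}, and the correction is a small perturbation of $E(t)$ since $\|u\|_{L^\infty}\ll 1$. Bookkeeping this normal form while keeping the low-frequency/high-frequency splitting consistent with the inhomogeneous Besov threshold at $\epsilon$ is the technically most delicate part of the argument.
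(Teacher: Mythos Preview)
Your overall architecture—modified energy plus Gronwall over the quartic timescale—is the same as the paper's, and the target growth rate $M\epsilon\, t^{-2/3}$ is correct. But the frequency analysis in the middle has a real gap.

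First, $\mathcal{R}=\partial_x|D|^{-1}$ is the Hilbert transform $H$, of order zero: there is no derivative to integrate by parts onto $u$, so the bound $\|u_x\|_{L^\infty}E(t)$ in your low--high--high case is unjustified. (That case is in fact exactly zero: when $\lambda\ll\mu$, multiplication by $u_\lambda$ does not change the sign of the frequency of $z^{hi}$, so $H$ commutes with $u_\lambda$ on high-frequency inputs, and skew-adjointness kills the term.) More seriously, your high--high--low estimate does not close: with $\lambda\approx\mu\gg\nu$, Bernstein on $z_\nu$ and the Besov bound $\|u_\lambda\|_{L^2}\lesssim\epsilon\lambda^{1/2}$ produce a factor $\epsilon\lambda\nu$ in front of $\|z_\mu\|_{\dot H^{-1/2}}\|z_\nu\|_{\dot H^{-1/2}}$, which cannot be summed back to $E(t)$ without losing a derivative.

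The organization the paper uses is to split at the \emph{absolute} self-similar threshold $t^{-1/3}$ rather than by relative frequency size. Contributions in which at least one of the three factors lies below $t^{-1/3}$ are estimated directly from the pointwise bootstrap bounds on $u$ and $|D|^{1/2}u$, each yielding $O(M\epsilon\, t^{-2/3})\|y\|_{L^2}^2$. The \emph{entire} all-high remainder
\[
\int H|D|^{1/2}y^{hi}\cdot u^{hi}\cdot|D|^{1/2}y^{hi}\,dx
\]
---not only the diagonal block $\lambda\approx\mu\approx\nu\approx t^{-1/3}$---is then removed by the explicit cubic correction
\[
E^{[3]}=4\int H|D|^{-1/2}y^{hi}\cdot\partial_x^{-1}u^{hi}\cdot|D|^{-1/2}y^{hi}\,dx,
\]
where the truncation at $t^{-1/3}$ is precisely what makes $\partial_x^{-1}u^{hi}$ bounded (by $M\epsilon$) and hence $E^{[3]}$ a small perturbation of $E^{[2]}$. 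The quartic error terms generated by differentiating $E^{[3]}$ are of size $M^2\epsilon^2 t^{-1/3}\|y\|_{L^2}^2$, even better than needed. So the normal form must carry far more of the load than you assign it.

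A minor point: writing the Gronwall integral as $\int_1^{\epsilon^{-3}}$ gives an exponent $\lesssim M$ and hence a constant $e^{CM}$ in \eqref{linest12} that depends on $M$. The paper avoids this by exploiting the freedom in $t\ll_M\epsilon^{-3}$: integrating only up to $c(M)\epsilon^{-3}$ with $c(M)\sim M^{-3}$ makes the exponent $O(1)$ uniformly in $M$, which is essential for closing the bootstrap.
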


Equivalently, the adjoint linearized equation \eqref{adj-linearized} is well-posed 
in $\dot H^\frac12$ with uniform bounds. 
We note here that the implicit constant in \eqref{linest12} does not depend on the bootstrap constant $M$. Instead, $M$ appears only in the choice of the quartic time constant.

This result is proved in Section~\ref{s:linearized}, and 
serves as a key tool in the next step.
\bigskip

\textbf{III. Uniform  $\dot H^\frac12$ bounds for $L^{\NL} u$.} We recall that 
$L^{\NL} u$ solves the equation \eqref{LNL-eq}, which is the adjoint linearized equation with an $u^2$ source term. In view of the result in step II, it is thus 
natural to seek estimates for $L^{\NL} u$ in the space $\dot H^\frac12$.
 Using the linear estimates above, this amounts to proving appropriate bounds for the $u^2$ inhomogeneity.   We will show the following:

\begin{proposition}\label{p:LNL}
Let $u$ be a solution to the KdV equation \eqref{kdv} in a time interval $[0,T]$, which satisfies the smallness assumption \eqref{small} 
for the initial data, as well as the bootstrap assumption
\eqref{point-boot}. Assume that $T$ is as in \eqref{T-choice}.
Then we have
\begin{equation}\label{LNLest}
\|L^{\NL} u(t) \|_{\dot H^\frac12} \lesssim \epsilon, 
\qquad t \ll_M \epsilon^{-3} \qquad  t \in [0,T].
\end{equation}
\end{proposition}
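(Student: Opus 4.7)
The plan is to view $w := L^{NL} u$ as the solution to the inhomogeneous adjoint linearized equation \eqref{LNL-eq}, namely $w_t + w_{xxx} = 6 u w_x + 3u^2$, with initial datum $w(0) = xu_0$, which by \eqref{small} satisfies $\|w(0)\|_{\dot H^\frac12} \leq \epsilon$. Interpreting Proposition~\ref{p:linearized} by duality as an $\dot H^\frac12$ well-posedness statement for the adjoint linearized flow and invoking Duhamel's formula produces
\begin{equation*}
\|L^{NL} u(t)\|_{\dot H^\frac12} \lesssim \epsilon + \int_0^t \|u^2(s)\|_{\dot H^\frac12}\, ds,
\end{equation*}
so the proof reduces to controlling the $\dot H^\frac12$ norm of the cubic-type source term $u^2$.

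The target is $\|u^2(s)\|_{\dot H^\frac12} \lesssim M^2 \epsilon^2 s^{-\frac23}$; integrating on $[0,t]$ with $t \ll_M \epsilon^{-3}$ then produces the required $\lesssim \epsilon$ bound, with the factor $M^2$ absorbed into the smallness of the $M$-dependent time constant. To achieve this I would use a Gagliardo-type product inequality $\|u^2\|_{\dot H^\frac12} \lesssim \|u\|_{L^\infty} \|u\|_{\dot H^\frac12}$. The $L^\infty$ factor follows immediately from the bootstrap assumption \eqref{point-boot}: since $\xx \geq s^\frac13$, one has $\|u\|_{L^\infty} \lesssim M\epsilon s^{-\frac13}$. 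For the $\dot H^\frac12$ factor I would split $u$ in Littlewood-Paley frequency at threshold $\lambda_0 := s^{-\frac13}$. Below $\lambda_0$ the uniform Besov bound of Proposition~\ref{p:energy} yields, after a dyadic geometric sum, $\|P_{<\lambda_0} u\|_{\dot H^\frac12} \lesssim \epsilon \lambda_0 = \epsilon s^{-\frac13}$. Above $\lambda_0$ I would mimic the dyadic spatial analysis from paragraphs~B--F of Section~\ref{s:lin}: in the elliptic region the improved decay $|u| \lesssim \epsilon \xx^{-1}$ supplies an integrable $L^2$ tail, while in the self-similar and hyperbolic annuli $A_R$ the high-frequency balance underlying Lemma~\ref{l:u-ell} produces a gain in $\|P_\lambda u\|_{L^2}$ for $\lambda \gg R^\frac12$, rendering $\sum_{\lambda > \lambda_0} \lambda \|P_\lambda u\|_{L^2}^2$ convergent and of size at most $M^2 \epsilon^2 s^{-\frac23}$.

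The main obstacle is precisely the high-frequency portion of $\|u\|_{\dot H^\frac12}$. Neither the Besov assumption behind Proposition~\ref{p:energy} nor the pointwise bootstrap \eqref{point-boot} directly provides sufficient regularity or spatially-integrable decay at high frequency. Reworking the dyadic spatial localization of Section~\ref{s:lin} in the nonlinear setting, using only $u$ and $u_x$ bootstrap information (and not $u_{xx}$, control of which in $L^2$ is essentially equivalent to what we are trying to prove via $L^{NL} u$), is the technical core of the argument and the step I would expect to absorb most of the work.
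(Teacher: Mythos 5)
Your reduction via Duhamel is the same starting point as the paper's: $w=L^{NL}u$ solves \eqref{LNL-eq}, Proposition~\ref{p:linearized} gives the homogeneous propagator bound, and everything hinges on the source term $u^2$. The gap is in the claimed estimate $\|u^2(s)\|_{\dot H^{1/2}}\lesssim M^2\epsilon^2 s^{-2/3}$: this bound is false in the hyperbolic region, and no amount of reworking the dyadic elliptic analysis of Section~\ref{s:lin} will produce it. The paper does carry out exactly the nonlinear versions of Lemmas~\ref{l:Lu-ell} and \ref{l:u-ell} that you anticipate (Lemmas~\ref{l:Lu-lo} and \ref{l:u-hi}), and they give $\|u\|_{L^2(A_R)}\lesssim \epsilon t^{-1/4}R^{1/4}$, $\|u_x\|_{L^2(A_R)}\lesssim \epsilon t^{-3/4}R^{3/4}$. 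But interpolating these with the pointwise bound $|u|\lesssim M\epsilon t^{-1/4}R^{-1/4}$ on a hyperbolic annulus $A_R$ yields $\|u^2\|_{\dot H^{1/2}(A_R)}\sim M^2\epsilon^2 t^{-3/4}R^{1/4}$, and since the Airy tail extends out to $|x|\sim t$ the sum over $R$ is dominated by $R\sim t$, giving only $\|u^2(s)\|_{\dot H^{1/2}}\gtrsim M^2\epsilon^2 s^{-1/2}$. Its time integral over $[1,t]$ is $\sim M^2\epsilon^2 t^{1/2}\sim M^2\epsilon^{1/2}\gg\epsilon$ at quartic time. (This is consistent with scaling: $\|u\|_{\dot H^{1/2}}$ is essentially conserved by the linear flow and cannot decay, so $\|u\|_{L^\infty}\|u\|_{\dot H^{1/2}}$ can never beat $\epsilon^2 t^{-1/3}$, which is also not integrable to $\epsilon$ on the quartic scale.) The paper states this obstruction explicitly: the source is perturbative in the self-similar and elliptic regions but \emph{not} in the hyperbolic region.

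The missing idea is a normal form correction exploiting the nonresonance of the bilinear interaction in $u^2$. The paper splits $u=u_{lo}+u^++u^-$ at the self-similar frequency $t^{-1/3}$ and writes $u^2=(u^+)^2+(u^-)^2+f_2+f_3$ with $f_2=-u_{lo}^2+2uu_{lo}$ and $f_3=2u^+u^-$. The large, non-perturbative pieces $(u^\pm)^2$ are absorbed by writing them as $P_{lin}w_1$ plus cubic errors, where $w_1=\partial_x^{-1}\bigl((\partial_x^{-1}u^+)^2+(\partial_x^{-1}u^-)^2\bigr)$ is a quadratic correction bounded in $\dot H^{1/2}$ by $\epsilon M^2(\epsilon t^{1/3})^{1/2}$; only the remainder $f_1=f_2+f_3+f_4+f_5$ is put in $L^1_t\dot H^{1/2}$, with the crucial gain $t^{-1}(\epsilon t^{1/3})^{1/2}$. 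Moreover, estimating the balanced-frequency product $u^+u^-$ requires the additional microlocal pointwise bound \eqref{hyp-est} on $(\partial_x-i|x|^{1/2}t^{-1/2})u^+$, obtained from a semiclassical parametrix; this is a further ingredient your outline does not anticipate. So while your Duhamel framework and your identification of the hyperbolic high-frequency regime as the crux are both correct, the resolution is structural (normal form plus the $\pm$ frequency decomposition), not a refinement of the linear elliptic estimates.
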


This result is proved in Section~\ref{s:LNL}, and will play the same role  as the similar bound for $Lu$ in the linear case.

\bigskip

\textbf{IV. Nonlinear Klainerman-Sobolev inequalities.}
Taking into account the uniform bounds for $u$ in 
Proposition~\ref{p:energy} and for $L^{\NL} u$ in Proposition~\ref{p:LNL},
the desired pointwise bound \eqref{point-re}
will follow from the following:
\begin{proposition}\label{p:KS}
Assume that $T$ is as in \eqref{T-choice} and $0\le t \le T$. 
Let $u(t)$ be a  a function which satisfies the bootstrap assumption
\eqref{point-boot} associated to the time $t$, as well as the $L^2$ bounds

\begin{equation}\label{e-est}
 \| u{(t)} \|_{\dot B^{-\frac12}_{2,\infty}} + \| L^{\NL} u{(t)} \|_{\dot H^{\frac12}}
 \lesssim \epsilon.
\end{equation}

\noindent a) Then we have the pointwise bound 
\begin{equation}\label{KS13}
t^\frac14 \xx^{\frac14} |u(t,x)| + t^\frac34 \xx^{-\frac14} |u_x(t,x)| 
 \lesssim \epsilon.
\end{equation}
\noindent b) In the elliptic region $E$ we have the additional bound 
\begin{equation}\label{KS-E}
\xx |u(t,x)| + t^\frac12 \xx^{\frac12} |u_x(t,x)| 
 \lesssim \epsilon \ln (t^{-1/3} \xx).
\end{equation}
\end{proposition}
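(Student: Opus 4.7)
The plan is to reduce the proposition to the linear Klainerman–Sobolev analysis of Section~\ref{s:lin}, treating the quadratic term in $L^{NL}u$ as a perturbative source. Both hypotheses \eqref{point-boot}, \eqref{e-est} and the conclusions \eqref{KS13}, \eqref{KS-E} are scaling-invariant under the KdV scaling, so I first rescale to set $t=1$. The definition $L^{NL}u = xu - 3u_{xx} + 9u^2$ then rewrites as
\[
(x - 3\partial_x^2) u = f, \qquad f := L^{NL}u - 9u^2,
\]
which is exactly the elliptic equation in Section~\ref{s:lin}, up to the additional nonlinear source $-9u^2$.

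I would next reproduce the dyadic $L^2$ bounds of Lemmas~\ref{l:Lu-ell} and \ref{l:u-ell} on each annulus $A_R = \{\langle x\rangle \sim R \gtrsim 1\}$. Their proofs only use the Besov bound on $u$ and the $\dot H^{\frac12}$ bound on the $L$-type operator, so they transfer verbatim to yield
\[
\|L^{NL}u\|_{L^2(A_R)}\lesssim \epsilon R^{\frac12},\quad \|u\|_{L^2(A_R)}\lesssim \epsilon R^{\frac14},\quad \|u_x\|_{L^2(A_R)}\lesssim \epsilon R^{\frac34},\quad \|u_{xx}\|_{L^2(A_R)}\lesssim \epsilon R^{\frac54}.
\]
To fold the quadratic correction into the source, I use the bootstrap \eqref{point-boot}, which after rescaling gives $|u|\lesssim M\epsilon\langle x\rangle^{-\frac14}$ and $|u_x|\lesssim M\epsilon\langle x\rangle^{\frac14}$. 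These imply
\[
\|u^2\|_{L^2(A_R)}\lesssim (M\epsilon)^2,\qquad \|(u^2)_x\|_{L^2(A_R)}\lesssim (M\epsilon)^2,
\]
and hence $\|u^2\|_{\dot H^{1/2}(A_R)}\lesssim (M\epsilon)^2$ by interpolation. Since the closing regime $t\ll_M \epsilon^{-3}$ forces $M^2\epsilon\ll 1$, the quadratic source is absorbed into the linear one, and $f$ inherits the same bounds as in Section~\ref{s:lin}:
\[
\|f\|_{L^2(A_R)}\lesssim \epsilon R^{\frac12},\qquad \|f\|_{\dot H^{1/2}}\lesssim \epsilon.
\]

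With these bounds in place, I would run paragraphs \textbf{C}, \textbf{D}, \textbf{E}, \textbf{F} of Section~\ref{s:lin} unchanged. Localization (\textbf{C}) reduces to the cutoff $v = \chi_R u$ solving the same elliptic equation modulo lower-order commutators. The hyperbolic energy estimate (\textbf{D}), split at frequency $R^{\frac12}$ with Bernstein's $(\ln R)^{\frac12}$ loss on $f_{lo}$, yields the pointwise bound on $A_R^H$. In the self-similar region (\textbf{E}) the claim follows from Sobolev embedding applied to the above $L^2$ bounds, and in the elliptic region (\textbf{F}) the leading term $x^{-1}f_{lo}$ is subtracted and the remainder estimated by the same energy identity, producing the sharper bound \eqref{KS-E}.

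The principal technical obstacle is the $\dot H^{\frac12}(A_R)$ control of the quadratic correction $u^2$ uniformly in the dyadic scale; a paraproduct decomposition handles it, with the low-high piece bounded by $\|u\|_{L^\infty}\|u\|_{L^2(A_R)}$ and the high-high piece absorbed using the bootstrap bound on $u_x$. Apart from this check, every step is a direct parallel of the linear argument, with the factor $M^2\epsilon$ providing the smallness needed to absorb the nonlinearity into the linear source estimates.
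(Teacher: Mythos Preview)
Your overall architecture (rescale to $t=1$, reproduce the dyadic $L^2$ bounds, then run the localized analysis of Section~\ref{s:lin} region by region) matches the paper's. The gap is in how you handle the quadratic term $u^2$.

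You propose to move $u^2$ entirely into the source $f$ and then quote the linear argument verbatim. For this you need $\|u^2\|_{\dot H^{1/2}(A_R)} \lesssim (M\epsilon)^2$ uniformly in $R$, and you justify it via $\|(u^2)_x\|_{L^2(A_R)} \lesssim (M\epsilon)^2$. That bound is wrong: from $|u\,u_x|\lesssim (M\epsilon)^2$ pointwise one only gets $\|(u^2)_x\|_{L^2(A_R)}\lesssim (M\epsilon)^2 R^{1/2}$, hence by interpolation $\|u^2\|_{\dot H^{1/2}(A_R)}\lesssim (M\epsilon)^2 R^{1/4}$. Feeding this into Step~\textbf{D}, the contribution $\|f_{hi}\|_{L^2}\|v_x\|_{L^2}$ picks up $(M\epsilon)^2\cdot \epsilon R^{3/4}$, which beats the target $\epsilon^2 R^{1/2}$ only when $M^2\epsilon R^{1/4}\lesssim 1$. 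Since $R$ ranges over all dyadic scales in the hyperbolic region, the absorption fails. The paper says this explicitly: ``the nonlinear part of $L^{NL}$ is nonperturbative in this argument.''

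What the paper does instead is keep the nonlinearity on the left, writing the localized equation as $(x-3\partial_x^2)v + 3uv = f$. In the hyperbolic energy identity this produces the extra term $-6uvv_x$, which is rewritten as $-2\partial_x(\chi_R^2 u^3) - 2\chi_R\chi_R' u^3$; the total derivative is absorbed into the energy as a cubic correction $-2uv^2$ (harmless since $|u|\ll |x|$), and the remainder carries a $\chi_R'$ giving an $R^{-1}$ gain, so it contributes only $M^3\epsilon^3 R^{-3/4}\ll \epsilon^2 R^{1/2}$. In the elliptic region the paper likewise keeps $3uv$ with $x$ as a potential perturbation rather than moving it into $f$. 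Your self-similar step is fine as written. To fix your argument, replace the ``absorb $u^2$ into $f$'' step by this normal-form energy correction in the hyperbolic region and the potential-perturbation viewpoint in the elliptic region.
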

This result is derived in Section~\ref{s:KS}. We point out 
that the bounds in this proposition are fixed time bounds, i.e.
they only involve $u(t)$ and make no reference to the KdV equation;
nevertheless the time $t$ is still present in the statement, 
as both the hypothesis and the conclusion depend on $t$.
\bigskip

One sees that at the conclusion of steps I-III above
we obtain the bound \eqref{e-est} in the time interval $[0,T]$
provided that $T \ll_M \epsilon^{-3}$.
 Here it is crucial that the constant $M$ 
in the bootstrap assumption~\eqref{point-boot} does not 
influence the implicit constant in \eqref{e-est}.
Then applying step IV above we obtain the desired pointwise
bounds \eqref{point-re}, again with implicit constants 
independent of $M$, in the same range $0 \leq t  \leq T \ll_M \epsilon^{-3}$. 

Thus we can first choose  $M$ to be a sufficiently large universal constant,
so that \eqref{KS-E} improves \eqref{point-boot}.  Then we choose
$T$ small enough (depending on $M$) as in \eqref{T-choice}.
This concludes the bootstrap argument, since it is obvious that the dependence of the implicit constants on $M$ is monotone,  provided that $M$ is a sufficiently large universal constant.

\bigskip

\newsection{Energy estimates} \label{s:energy}

The goal of this section  is to establish the uniform bounds for $u$ in Proposition~\ref{p:energy}, which involve the Besov space $B^{\blue{-}\frac12,\epsilon}_{2,\infty}$. 
This is an  easy consequence of uniform $H^{-1}$ bounds in \cite{MR3400442} as well as of the $H^{-1}$ energy functional constructed in \cite{KT}, and 
a special case of Theorem 1.2   by Killip-Visan-Zhang 
\cite{MR3820439}:  

\begin{theorem}
There exists $\delta > 0$ and an energy functional 
\[
E^{-1}: B_\delta(H^{-1}) := \left\{ u\in H^{-1}\, , \, \Vert u\Vert_{{H^{-1}}} \leq \delta \right\} \to \R^+,
\]
so that 

(i) Norm equivalence: $E^{-1}$ is equivalent to the $H^{-1}$ norm,
\[
E^{-1}(u) \approx \| u\|_{H^{-1}}^2.
\]

(ii) Conservation: $E^{-1}$ is conserved along the KdV flow. 
\end{theorem}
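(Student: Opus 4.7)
The plan is to construct $E^{-1}$ via the perturbation determinant of the Lax operator $L_u = -\partial_x^2 + u$ for KdV, following the strategy of Koch-Tataru \cite{KT} and Killip-Visan-Zhang \cite{MR3820439}. For $u$ in the small $l^2DU^2$ ball and for $\kappa > 0$, one defines a renormalized logarithmic transmission coefficient $\alpha(\kappa, u)$, roughly
\[
\alpha(\kappa, u) = \log\, \mathrm{det}_{\mathrm{reg}}\bigl( I + R_0(\kappa)^{1/2}\, u\, R_0(\kappa)^{1/2}\bigr),
\]
where $R_0(\kappa) = (-\partial_x^2 + \kappa^2)^{-1}$ and $\mathrm{det}_{\mathrm{reg}}$ is a Fredholm-type regularized determinant. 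The $l^2DU^2$ smallness is precisely the regularity hypothesis that guarantees convergence of the determinant and analyticity of $\alpha(\kappa, \cdot)$ on $B_\delta$, uniformly for $\kappa$ down to zero, which is where the $H^{-1}$ regularity is encoded.

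The functional $E^{-1}$ is then obtained as a suitable moment of $\alpha$ in $\kappa$, chosen so that the quadratic part in $u$ reproduces the $H^{-1}$ norm. The quadratic part of $\alpha$ is an explicit positive Fourier multiplier in $u$, and integration against an appropriate weight $\kappa^\beta\, d\kappa$ yields the symbol $(1+\xi^2)^{-1}$ up to a positive constant. This gives the norm equivalence at the quadratic level. The cubic and higher terms in the Taylor expansion of $\alpha$ are then controlled, by analyticity, in terms of $\|u\|_{H^{-1}}^2 \cdot \|u\|_{l^2DU^2}$, and smallness closes the estimate, yielding $E^{-1}(u) \approx \|u\|_{H^{-1}}^2$.

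Conservation of $E^{-1}$ along the KdV flow reduces, via the $\kappa$-integration, to conservation of $\alpha(\kappa, u(t))$ for each $\kappa$. For Schwartz initial data this is immediate from the Lax pair structure: $L_{u(t)}$ is unitarily equivalent to $L_{u_0}$, hence its scattering transmission coefficient, and therefore $\alpha$, is preserved. The main obstacle, and the technical core of the argument, is to promote this to distributional $H^{-1}$ solutions: one must give rigorous meaning to $L_{u(t)}$ as a self-adjoint form on a Sobolev space, and then approximate by Schwartz solutions, relying on the global well-posedness of KdV in $H^{-1}$ from \cite{MR3820439} together with continuity of $\alpha(\kappa, \cdot)$ in the $l^2DU^2$ topology, and passing to the limit. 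Since the present statement is literally Theorem 1.2 of Killip-Visan-Zhang at $s = -1$, I would not redo these constructions but import them directly, and then verify downstream that the norm equivalence $E^{-1}(u)\approx\|u\|_{H^{-1}}^2$ in combination with the $\epsilon^{1/2} L^2$ flattening below frequency $\epsilon$ gives the Besov bound \eqref{energy}.
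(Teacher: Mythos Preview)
The paper does not prove this theorem at all; it is stated as a known result imported from \cite{KT} and \cite{MR3820439} (the paper explicitly labels it ``a special case of Theorem 1.2 by Visan-Killip-Zhang''), and is then used as a black box in the short proof of Proposition~\ref{p:energy} that follows. Your proposal correctly identifies this and your final sentence---import the construction directly from the cited references---is exactly the paper's treatment.

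Your sketch of the perturbation-determinant construction is accurate in spirit and goes well beyond what the paper records. One minor point: the approximation step you describe, invoking $H^{-1}$ well-posedness from \cite{MR3820439} to extend conservation from Schwartz data, is slightly misattributed (the well-posedness is \cite{visan-kdv}, and in fact the conservation law in \cite{MR3820439} is an ingredient in that proof rather than a consequence), but this does not affect the logic since conservation is first established for smooth solutions and then extended by continuity. Also, your description of $E^{-1}$ as a $\kappa$-moment of $\alpha$ is closer to the \cite{KT} formulation; in \cite{MR3820439} one works with $\alpha(\kappa,\cdot)$ at a single fixed $\kappa$ and reads off the $H^{-1}$ equivalence directly. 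Either route is fine for the purpose here.
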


\begin{proof}[Proof of Proposition~\ref{p:energy}]
We interpret the (nearly) homogeneous Besov norm in the proposition in terms 
of $H^{-1}$ norms by using the rescaled KdV solution
\[
u^{[\alpha]}(x,t) = \alpha^{-2} u(x/\alpha,t/\alpha^3),
\]
 where the frequency $\lambda$ for $u$ corresponds to the frequency $1$ for $u^{[\lambda]}$. 
 Using this scaling applied with $\lambda \geq \epsilon$, the Besov norm in $B^{-\frac12,\epsilon}_{2,\infty}$ can be expressed 
as 
\[
\|u\|_{B^{-\frac12,\epsilon}_{2,\infty}} \approx \sup_{\lambda \geq \epsilon} \lambda \| u^{[\lambda]} \|_{H^{-1}}.
\]
Here the norm on the right essentially selects the $\lambda$
frequencies of $u$. The $\lambda$ factor arises because the KdV 
scaling is at the $\dot H^{-\frac32}$ level, whereas here we are measuring 
$\dot H^{-\frac12}$ type norms.  Precisely, going in one direction we have for  $\lambda \geq \epsilon $
\[
\| P_\lambda u\|_{B^{-\frac12,\epsilon}_{2,\infty}}
\approx \lambda^\frac12 \| P_\lambda u\|_{\dot H^{-1}} 
\approx \lambda \|P_{1} u^{[\lambda]}\|_{\dot H^{-1}} \lesssim \lambda \| u^{[\lambda]}\|_{H^{-1}},
\]
where $P_\lambda$ and $P_1$ are standard dyadic Littlewood-Paley projectors.
The other direction is similar.

At the initial time $t = 0$ the norm on the left 
has size $\ll \epsilon$, so all the norms on the right have size 
$\ll 1$. Hence the above theorem applies, and they (i.e.\ the norms) are 
approximatively conserved. This yields the desired bound for the Besov norm.
\end{proof}

\newsection{Bounds for the linearized equation}
\label{s:linearized}

The aim of this section is to prove Proposition~\ref{p:linearized}. Throughout the section
we will assume that $u$ solves the KdV equation and satisfies the uniform energy estimates 
\eqref{energy} in Proposition~\ref{p:energy} as well as our bootstrap assumptions \eqref{point-boot}. 

Using the standard notation $D=-i \partial_x$, we switch to a new variable 
\[
y := |D|^{-\frac12} w,
\]
which solves the equation
\begin{equation}
\label{y-eq}
(\partial_t + \partial_x^3) y = 6 H |D|^\frac12 ( u |D|^\frac12 y).
\end{equation} 
where $H= \frac{D}{|D|}$ is the usual Hilbert transform.

This new variable has the role to shift our problem in an $L^2$ setting, and also to simplify the exposition of the paper. Thus, for this equation we need to prove uniform bounds for the  $L^2$ norm of $y$,
\[
E^{[2]} (y) = \|y\|_{L^2}^2,
\]
namely
\[
E^{[2]} (y(t)) \approx E^{(2)}(y(0)). 
\]

We have 
\begin{equation} \label{eq:timeder} 
\partial_t E^{[2]} (y)  = - 12 \int  H |D|^\frac12 y \cdot u |D|^\frac12 y \, dx.
\end{equation} 
The expression on the right is too large to be estimated directly in terms of $\|y\|_{L^2}$.
However, it is nonresonant when all three entering frequencies are nonzero, so 
we can try to eliminate it using a normal form energy correction. Precisely, we will seek to eliminate 
(the bulk of) this expression by adding a cubic 
correction to the quadratic energy functional, at the 
expense of producing further quartic errors; these quartic errors will be bounded.

In this paragraph we will explain the heuristics which are meant to justify the energy correction we will consider below. Thus we begin with our initial KdV equation \eqref{kdv} for which we can formally compute the normal form transformation that removes the quadratic nonresonant terms:
\[
\tilde u = u - (\partial^{-1}_x u)^2.
\]
Here $\tilde{u}$ is the normal form variable which will satisfy a KdV like-equation: the linear part of the equation we obtain after implementing the normal form transformation is the same as in \eqref{kdv}, but there are no quadratic terms, only cubic ones.
However this is singular at frequency $0$.  Nevertheless this issue can be bypassed if we truncate  in a self-similar fashion, avoiding the low frequencies on the scale $|\xi| \lesssim t^{-\frac13}$,
\[
\tilde{\tilde{u}}: = u - (\partial^{-1}_x u_{\geq t^{-\frac13}})^2,
\]
and thus making the normal form rigorous.  We now go further and compute the normal form transformation for the linearized equation (which is the linearization of the original normal form) \eqref{linearized}, which at the formal level is given by
\[
\tilde{w}=w-2\partial^{-1}_xu\cdot \partial^{-1}_xw.
\]
The same truncation as above will also fix the singularity issue encountered at frequency zero. However, we are interested in correcting the functional energy corresponding to the $y$ equation \eqref{y-eq}. For this equation we also have a normal form transformation as the quadratic terms are nonresonant, and based on the definition of $y$ and its connection with the the linearized equation \eqref{kdv-lin}, the normal form (formal expression) is given by
\[
\tilde{y} = y  +2 |D|^{-1/2}  \partial^{-1}_x u\cdot H|D|^{-\frac{1}{2}} y.
\]
To determine what the cubic correction should be, we go ahead and proceed as in \cite{BH}. 
Hence, formally, the correction to the energy would be 
\begin{equation}
\label{correction}
E^{(3)}= 4 \int   H |D|^{-\frac12} y \cdot \partial^{-1}_x  u |D|^{-\frac12} y \, dx. 
\end{equation}

We have two issues here: \textbf{i)} we do not know apriori that this correction (i.e.\ $E^{(3)}$) is bounded, but we will show this is the remaining part of this section; \textbf{ii)}  \eqref{correction} cannot be used as it is because of low frequency issues. To remedy this,
we will estimate directly all the low frequency contributions to $\partial_t E^{[2]} (y)$ (see \eqref{eq:timeder}), choosing
the self-similar frequency scale $t^{-\frac13}$ as the truncation threshold.
We apply the standard  Littlewood-Paley trichotomy, which 
asserts that the two highest frequencies must be comparable while the third may be smaller.
Because of this, there are three cases to consider:

\medskip

(i) Three low frequencies:
\[
 \left| \int  H |D|^\frac12 y_{\lesssim  t^{-\frac13}} \cdot  u_{\lesssim t^{-\frac13}} \cdot
 |D|^\frac12 y_{\lesssim t^{-\frac13}}  \, dx \right|\lesssim  t^{-\frac13}  \|u\|_{L^\infty} 
\|y \|_{L^2}^2 \lesssim \epsilon M t^{-\frac23}  \|y\|_{L^2}^2 .
\]
Here we  use Cauchy-Schwartz inequality together with the bootstrap assumption \eqref{point-boot}.

\medskip

(ii) Low frequency on $u$. Here we have
\[
\int  H |D|^\frac12 y_{\gg t^{-\frac13}}\cdot u_{ \lesssim t^{-\frac13}}  \cdot |D|^\frac12 y_{\gg  t^{-\frac13}} \, dx = 0,
\]
as $H$ is skew-adjoint and we can commute it across $u$.

\medskip

(iii) Low frequency on either $y$ factor. Here we move the fractional derivative to the product of the two other factors and    compute using   a fractional Leibniz rule:
\[
\begin{aligned}
 \left| \int  H |D|^\frac12 y_{\gg t^{-\frac13}} \cdot  u_{\gg t^{-\frac13}} \cdot 
 |D|^\frac12 y_{\lesssim t^{-\frac13}}  \, dx \right| 
 &\lesssim  t^{-\frac16}  \||D|^\frac12   u_{\gg t^{-\frac13}}  \|_{L^\infty} 
 \|y \|_{L^2} \| y_{\lesssim t^{-\frac13}}\|_{L^2}\\
&\lesssim M \epsilon t^{-\frac23}  \|y\|_{L^2}^2 .
\end{aligned}
\]
Here we also get a milder commutator term when switching the half-derivative onto $u$. The pointwise bound on $|D|^{\frac{1}{2}}u$ follows as an interpolation of the bootstrap bounds in \eqref{point-boot}.

\medskip
To summarize, we have proved 
\begin{equation}\label{dtE2}
\partial_t E^{[2]} (y)  = 12  \int  H |D|^\frac12 y^{\hi}  \cdot u^{\hi} \cdot |D|^\frac12 y^{\hi} \, dx + O(M \epsilon t^{-\frac23}) \| y\|_{L^2}^2,
\end{equation}
where
\[
y^{\hi} := y_{\gg t^{-\frac13}}, \qquad u^{\hi} := u_{ \gg t^{-\frac13}}.
\]
This simplification allows us to use a restricted normal form energy correction,
\begin{equation}\label{E3}
E^{[3]}:= 4 \int   H |D|^{-\frac12} y^{\hi} \cdot \partial^{-1}_x  u^{\hi} \cdot |D|^{-\frac12} y^{\hi} \, dx,
\end{equation}
which  is a  rigurous truncation at high frequencies of the functional $E^{(3)}$ defined in \eqref{correction}. 
Then we define the modified energy as 
\[
E := E^{[2]} + E^{[3]},
\]
and we need to prove norm equivalence,
\begin{equation}\label{E-eq}
E(y) \approx \|y\|_{L^2}^2,
\end{equation}
and slow growth,
 \begin{equation}\label{E-grow}
\partial_t E(y) \lesssim M \epsilon t^{-\frac23}  \|y\|_{L^2}^2.
\end{equation}

For the first bound we estimate 
\[
|E^{[3]}(y)| \lesssim t^{\frac13} \| \partial^{-1}_x u^{\hi } \|_{L^\infty} \|y\|_{L^2}^2  \lesssim \epsilon t^{\frac13}\|y\|_{L^2}^2,
\]
which suffices on the quartic time scale.

It remains to prove \eqref{E-grow}. For that, using also \eqref{dtE2}, we compute
\[
\partial_t E(y)  =  D_1 + D_2 + D_3 + O(M \epsilon t^{-\frac23})\|y\|_{L^2}^2,
\]
where all cubic terms arising from $\partial_x^3$ cancel because of our choice of the correction:

\bigskip

(i) $D_1$ arises from the scale change in the truncation,
as the multiplier $P^{\hi}$ is time dependent, with symbol
of the form
\[
p^{\hi}(\xi) := \chi(t^\frac13 \xi).
\]
Its time derivative has the form
\[
\partial_t p^{\hi}(\xi) = t^{-1} \frac{t^\frac13 \xi}3  \chi'(t^\frac13 \xi),
\]
which is supported exactly in the region $|\xi| \approx t^{-\frac13}$, and we harmlessly abbreviate it as
\[
t^{-1} P_{t^{-\frac13}}.
\]
Then the corresponding error term is 
\[
\begin{aligned}
\! D_1 = &  \! \int \!  t^{-1} H |D|^{-\frac12} y^{\hi} \cdot  \partial^{-1}_xu_{t^{-\frac13}} \cdot |D|^{-\frac12} y^{\hi} \, dx
+ \! \int \!  t^{-1} H |D|^{-\frac12} y_{t^{-\frac13}} \cdot  \partial^{-1}_x u^{\hi} \cdot  |D|^{-\frac12} y^{\hi} \, dx \! 
\\
&+  \int  t^{-1} H |D|^{-\frac12} y^{\hi} \cdot  \partial^{-1}_x u^{\hi} \cdot  |D|^{-\frac12} y_{t^{-\frac13}} \, dx.
\end{aligned}
\]

\bigskip

(ii) $D_2$ is the quartic term arising from $u_t$, 
\[
D_2 =  \int   H |D|^{-\frac12} y^{\hi}\cdot    (u^2)^{\hi} \cdot |D|^{-\frac12} y^{\hi} \, dx.
\]

\bigskip

(iii) $D_3$ is the quartic term arising from $y_t$,
\[
D_3 =  \int   ( u |D|^\frac12 y)^{\hi} \cdot  \partial^{-1}_x u^{\hi} \cdot  |D|^{-\frac12} y^{\hi} \,dx.
\]

For $D_1$ we use the pointwise bounds
\begin{equation}
\label{hi-point-bound}
|\partial^{-1}_x u^{\hi}| + |\partial^{-1}_x u_{t^{-\frac13}}| \lesssim M \epsilon
\end{equation}
to compute
\[
|D_1| \lesssim M \epsilon t^{-\frac23} \|y\|_{L^2}^2.
\]

For $D_2$ we use the pointwise bound on $u$ to estimate
\[
|D_2| \lesssim t^{\frac13} \|u^2\|_{L^\infty} \|y\|_{L^2}^2 \lesssim M^2 \epsilon^2 t^{-\frac13} \|y\|_{L^2}^2,
\]
which again suffices.

Finally for $D_3$ we write
\[
\begin{split}
D_3 = & \ \int |D|^\frac12 y\cdot u\, \cdot \,  P^{\hi}( \partial^{-1}_x u^{\hi} \cdot  |D|^{-\frac12} y^{\hi})  \, dx 
\\
= & \  \int y \cdot |D|^\frac12 \left[ u \cdot  P^{\hi}\left( \partial^{-1}_x u^{\hi} \cdot |D|^{-\frac12} y^{\hi} \right)\right] \, dx .
\end{split}
\] 
Then we distribute $|D|^\frac12$ to each of the other factors using a fractional Leibniz rule
to get 
\[
\begin{aligned}
|D_3| \lesssim  \| y\|_{L^2} & \left(  \| |D|^\frac12 u\|_{L^\infty} \| \partial^{-1}_x u^{\hi}\|_{L^\infty} 
\||D|^{-\frac12} y^{\hi}\|_{L^2}+ \right. \\ & \left.\  \| u\|_{L^\infty} \| |D|^{-\frac12} u^{\hi}\|_{L^\infty} \||D|^{-\frac12} y^{\hi}\|_{L^2} 
+  \| u\|_{L^\infty} \| \partial^{-1}_x u^{\hi}\|_{L^\infty} \| y^{\hi}\|_{L^2} \right).
\end{aligned}
\]
We again use the bootstrap bounds \eqref{point-boot} and and the high frequency bounds \eqref{hi-point-bound} we conclude that
\[
\begin{aligned}
|D_3|\lesssim & \ ( M\epsilon t^{-\frac12}\cdot M\epsilon \cdot  t^{\frac16} + M\epsilon t^{-\frac13} \cdot M\epsilon t^{-\frac16} 
\cdot t^{\frac16} + M\epsilon t^{-\frac13} \cdot M\epsilon)   \|y\|_{L^2}^2 
\\
\lesssim & \ M^2 \epsilon^2 t^{-\frac13}    \|y\|_{L^2}^2 .
\end{aligned}
\]
Thus \eqref{E-grow} is proved, and the conclusion of Proposition \ref{p:linearized}  follows via a direct application 
of Gronwall's inequality for the modified energy functional $E(y)$.

\newsection{\texorpdfstring{$\dot H^\frac12$}{} bounds for \texorpdfstring{$L^{\NL} u$}{}.}
\label{s:LNL}
Our aim here is to prove the $\dot H^\frac12$ bound for $L^{\NL} u$ in Proposition~\ref{p:LNL}. 
Here we assume that $u$ is a solution to the KdV equation \eqref{kdv}, which satisfies the uniform energy bounds
given by Proposition~\ref{p:energy}, as well as the pointwise bootstrap assumptions in 
\eqref{point-boot}.

To improve the clarity of the proof, we will add to this a  second bootstrap assumption, namely
\begin{equation}\label{L-boot}
\| L^{\NL} u \|_{\dot H^\frac12} \leq M \epsilon,
\end{equation}
where $M$ is the same as in \eqref{point-boot}.
In order to streamline various computations we will make the 
harmless additional assumption 
\[
M^2 \epsilon T^\frac13 < 1.
\]

We recall that $w = L^{\NL} u$ solves the inhomogeneous adjoint  linearized equation \eqref{LNL-eq},
which we recall here:
\begin{equation}\label{LNL-eq-re}
w_t + w_{xxx} = 6 ( u w_x) + 3u^2.
\end{equation}
By Proposition~\ref{p:linearized} we know that this equation is well-posed in $\dot H^{\frac12}$
with uniform bounds. In order conclude the proof of Proposition~\ref{p:LNL}
we need to have a good way to deal with the source term $u^2$.

One might at first hope that this term can be treated perturbatively,
i.e.\ estimated directly in $L^1_t \dot H^{\frac12}$. This 
indeed turns out to be the case within the self-similar region.
The elliptic region is also favourable due to the better decay, but
the hyperbolic region is a problem due to the weaker Airy decay for $u$. However, the redeeming feature there turns out to be that the 
bilinear interaction in $u^2$ is largely nonresonant, and can be 
treated using a normal form type correction. To implement the above heuristics we will prove the following:
\begin{proposition} \label{p:u2-dec}
Assume that $u$ solves the KdV equation and satisfies the energy bounds \eqref{energy}
and the bootstrap assumptions \eqref{point-boot} and \eqref{L-boot}.
Then the function $u^2$ admits the representation
\begin{equation}\label{u2-dec}
u^2 = P_{\lin} w_1 + f_1,
\end{equation}
where  $P_{\lin}$ refers to the linear part of \eqref{LNL-eq-re} and 
the functions $w_1$ and $f_1$ satisfy the uniform $\dot H^\frac12$ bounds
\begin{equation}
\| w_1(t)\|_{\dot H^\frac12}  \lesssim\epsilon  M^2  (\epsilon t^\frac13),
\end{equation}
respectively
\begin{equation}
\| f_1(t)\|_{\dot H^\frac12} \lesssim \epsilon t^{-1}  M^2  (\epsilon t^\frac13).
\end{equation}
\end{proposition}

It is easily seen that, given this proposition, the conclusion of Proposition~\ref{p:LNL} follows 
easily by applying Proposition~\ref{p:linearized} to $w = L^{\NL} u - w_1$. 
The remainder of this section is devoted to the proof of the above proposition. 
Along the way, we will establish some additional bounds on $u$ and $L^{\NL} u$,
which will also be useful in the proof of the nonlinear Klainerman-Sobolev inequalities 
in the next section.

\subsection{ The decomposition of $u^2$}\label{s:dec-u2}
To define the functions $w_1$ and $f_1$ above  we begin with a linear decomposition
of $u$, using the spectral projectors (multipliers) $P_{\lo}$ and $P^{\pm}$
defined based on the time dependent $t^{-\frac13}$ threshold by 
\[
P_{\lo} := P_{< t^{-\frac13}}, \qquad P^{\pm} := P^\pm_{\geq t^{-\frac13}},
\]
so that 
\[
1 = P_{\lo} + P^{+}+P^- .
\]
This produces a corresponding decomposition of $u$, namely
\[
u = u_{\lo} + u^+ + u^-, \qquad u_{\lo} := P_{\lo} u, \qquad u^{\pm}:= P^\pm u.
\]
We note that $u_{\lo}$ is real, whereas $u^{\pm}$ are complex conjugate of each
other. 

We split $u^2$ into 
\[
u^2 = (u^+)^2 + (u^{-})^2 + f_{2} + f_3,
\]
where 
\[
f_2 := - u_{\lo}^2 + 2 u \cdot u_{\lo}, \qquad f_3 := 2u^+\cdot u^- .
\]
Here we expect $u_{\lo}$ to have better decay at infinity, so we will place $f_{2}$ into $f_1$.
The product in $f_3$ does not have better decay but instead is localized close to frequency zero,
so its $\dot H^\frac12$ norm will be better; thus we will also place it in $f_1$.

The remaining two terms are large, but have the redeeming feature that their interaction 
is nonresonant. Hence for them we will apply the normal form analysis. This will yield the quadratic correction 
\[
w_1 := \partial^{-1} _x\left( (\partial^{-1}_x u^+)^2 + (\partial^{-1}_x u^-)^2\right).
\]
This is chosen so that the quadratic terms in $P_{\lin} w_1$
give exactly $(u^+)^2 + (u^{-})^2$. However, $P_{\lin} w_1$ will also have cubic terms, as both the equation \eqref{kdv} for $u$ and the linearized equation have quadratic terms. Hence we obtain
a relation of the form
\[
P_{\lin} w_1 =( u^+)^2 + (u^{-})^2 + f_4 + f_5
\]
where the cubic terms $f_4$ and $f_5$ are as follows:

- $f_4$ arises from the quadratic term in the KdV equation,
\[
f_4 :=6 \partial^{-1}_x \left( \partial^{-1}_x u^+ P^+(u^2) + 6\partial^{-1}_x u^-  P^-(u^2) \right),
\]

-$f_5$ arises from the quadratic term in $P_{\lin}$,
\[
f_5 :=6 u \cdot \partial_x w_{1}.
\]
These we will seek to place in the perturbative box $f_1$.
Thus we will set 
\[
f_1 = f_2 + f_3 + f_4 + f_5.
\]
Now that we have the decomposition \eqref{u2-dec} for $u^2$, it remains to prove the desired estimates.
We remark that from here on, all the estimates in this section are at fixed time.

\subsection{ Elliptic bounds for $u$ and $L^{\NL} u$ }
\label{ss:ell}
As a preliminary step to estimating the functions $u^{\pm}$
and $u_{\lo}$, we need to improve our understanding of $u$
and $L^{\NL} u$.
For that,  we have to repeat the elliptic estimates in Lemmas~\ref{l:Lu-ell},\ref{l:u-ell} in the nonlinear
setting, under the bootstrap assumptions \eqref{point-boot} and \eqref{L-boot}. 

However, we will also need to reuse these elliptic estimates
in slightly greater generality in the proof of the Klainerman-Sobolev inequalities in Section~\ref{s:KS}. Because of this, in this 
subsection we will replace the bootstrap assumption \eqref{L-boot}
with the following variation:
\begin{equation}\label{L-boot+}
\| L^{\NL} u \|_{\dot H^\frac12} \leq M_L \epsilon.
\end{equation}
where $M_L$ is assumed to satisfy
\[
1 \leq M_L \leq M
\]
For the purpose of this section we could simply take $M_L = M$.
However, as the conclusion of the bootstrap argument in this section we will obtain that  
the above bound holds with $M_L = 1$, and then in the proof of the Klainerman-Sobolev inequalities in Section~\ref{s:KS} we will use
instead $M_L = 1$.

The results will be stated in full generality, but for the proofs it will be 
convenient to rescale to $t = 1$.  Here this can be done
using the exact scaling associated to the KdV equation.
Precisely, given the equation
\[
(x - 3t \partial^2_x) u + 3 t u^2 = f, \qquad f:= L^{\NL} u 
\]
we make the substitution
\[
\tilde u(x) := t^\frac23 u (t, xt^\frac13), \qquad \tilde f(x) :=  t^\frac13 f (t, xt^\frac13).
\]
Now $\tilde u$ and $\tilde f$ solve the same equation but with $t = 1$,
\begin{equation}\label{resc-L}
(x - 3 \partial^2_x) \tilde u + 3  \tilde u^2 = \tilde f .
\end{equation}
Our energy bound for $u$ in \eqref{energy} becomes
\begin{equation}\label{resc-u}
\| \tilde u \|_{B^{-\frac12}_{2,\infty}} \lesssim  \tilde \epsilon,
\end{equation}
where the new smallness parameter $\tilde \epsilon$ is given by
\[
\tilde \epsilon := \epsilon t^{\frac13} \ll 1.
\]
On the other hand the bootstrap  bounds \eqref{point-boot} and \eqref{L-boot} for $u$ and $f$ transferred to 
$\tilde u$ and $\tilde f = L^{\NL}_{|t  = 1} \tilde u$ become 
\begin{equation}\label{resc-boot}
| \tilde u(x)| \leq M \tilde \epsilon \xx^{-\frac14}, \qquad |\tilde u_x(x)| \leq M \tilde \epsilon \xx^\frac14,
\end{equation}
respectively 
\begin{equation}\label{resc-f}
\| \tilde f \|_{\dot H^\frac12} \lesssim M_L \tilde \epsilon.
\end{equation}
In this setting we are assuming for simplicity that  
\begin{equation} \label{order_M} 
M \tilde \epsilon \leq M_L \leq M.
\end{equation} 

As in the analysis of the linear equation in Section~\ref{s:lin},
we begin with a low frequency bound for $L^{\NL} u$:
\begin{lemma}\label{l:Lu-lo}
Under the assumptions \eqref{point-boot} and \eqref{L-boot+}
we have 
\begin{equation}\label{Lu-lo}
\| L^{\NL} u\|_{L^2(A_R)} \lesssim M_L \epsilon R^\frac12.
\end{equation}
\end{lemma}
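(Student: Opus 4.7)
The plan is to adapt the proof of Lemma~\ref{l:Lu-ell} from the linear setting, absorbing the extra quadratic contribution in $L^{NL} u = L u + 9t u^2$ (with $L = x - 3t\partial_x^2$) via the pointwise bootstrap \eqref{point-boot}. Following the preceding part of Section~\ref{ss:ell}, I first rescale to $t = 1$ using \eqref{resc-L}, so the target becomes $\|\tilde f\|_{L^2(A_R)} \lesssim M_L \tilde\epsilon R^{1/2}$ with $\tilde f = L\tilde u + 9\tilde u^2$ and $L = x - 3\partial_x^2$; it suffices to consider $R \gtrsim 1$.

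Mirroring the linear argument, I split $\tilde u$ at the frequency threshold $R^{-1}$ as $\tilde u = \tilde u_{<R^{-1}} + \tilde u_{\geq R^{-1}}$, so that
\[
\tilde f \;=\; L \tilde u_{<R^{-1}} \;+\; L \tilde u_{\geq R^{-1}} \;+\; 9\tilde u^2.
\]
Since $\partial_x^2$ commutes with Fourier multipliers, $[L, P_{\geq R^{-1}}] = [x, P_{\geq R^{-1}}]$, and using $L \tilde u = \tilde f - 9\tilde u^2$ I reorganize the identity above as
\[
\tilde f \;=\; L \tilde u_{<R^{-1}} \;+\; P_{\geq R^{-1}} \tilde f \;+\; [x, P_{\geq R^{-1}}]\tilde u \;+\; 9\,P_{<R^{-1}}(\tilde u^2).
\]
It then remains to bound each of the four pieces in $L^2(A_R)$.

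The first three pieces are controlled exactly as in the linear proof. The Besov assumption \eqref{resc-u} gives $\|P_{<R^{-1}}\tilde u\|_{L^2} \lesssim \tilde\epsilon R^{-1/2}$, whence $\|L\tilde u_{<R^{-1}}\|_{L^2(A_R)} \lesssim R\,\tilde\epsilon R^{-1/2} = \tilde\epsilon R^{1/2}$ (the second-derivative part is negligible). The commutator $[x, P_{\geq R^{-1}}]$ is a Fourier multiplier of magnitude $R$ localized at $|\xi| \approx R^{-1}$, so $\|[x, P_{\geq R^{-1}}]\tilde u\|_{L^2} \lesssim R\,\tilde\epsilon R^{-1/2} = \tilde\epsilon R^{1/2}$. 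For the source term, the $\dot H^{1/2}$ bound \eqref{resc-f} yields $\|P_{\geq R^{-1}}\tilde f\|_{L^2} \lesssim R^{1/2}\|\tilde f\|_{\dot H^{1/2}} \lesssim M_L\tilde\epsilon R^{1/2}$.

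The only genuinely new piece, and the one I expect to be the focal point of the argument, is the low-frequency nonlinear remainder $P_{<R^{-1}}(\tilde u^2)$. Here I invoke the pointwise bootstrap \eqref{resc-boot}, which gives $\|\tilde u^2\|_{L^\infty} \lesssim M^2\tilde\epsilon^2$, and then apply Bernstein together with the size $|A_R| \approx R$ to get $\|P_{<R^{-1}}(\tilde u^2)\|_{L^2(A_R)} \lesssim R^{1/2} M^2\tilde\epsilon^2$. The smallness ordering \eqref{order_M}, namely $M\tilde\epsilon \leq M_L$ and $M \leq 1$, converts this into $M^2\tilde\epsilon^2 = M\cdot(M\tilde\epsilon)\cdot\tilde\epsilon \leq M_L\tilde\epsilon$, so the nonlinear remainder respects the same bound $\lesssim M_L\tilde\epsilon R^{1/2}$. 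Undoing the rescaling concludes the proof; the only delicate point is the bookkeeping in $M$, $M_L$, and $\tilde\epsilon$, which \eqref{order_M} is designed to accommodate.
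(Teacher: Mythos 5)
Your proposal is correct and takes essentially the same route as the paper's proof: rescale to $t=1$, split $u$ at the frequency cutoff $R^{-1}$ and treat the resulting $L u_{<R^{-1}}$, $P_{\gtrsim R^{-1}} L^{NL} u$ and commutator pieces exactly as in Lemma~\ref{l:Lu-ell}, then absorb the new low-frequency term $P_{<R^{-1}}(\tilde u^2)$ via the pointwise bootstrap bound and the ordering \eqref{order_M}. The only (immaterial) difference is that the paper retains the spatial decay $|\tilde u^2|\lesssim M^2\tilde\epsilon^2 \xx^{-\frac12}$ rather than just the $L^\infty$ bound.
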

\begin{proof}
As discussed above, by rescaling, we can set $t = 1$. 
As in the proof of Lemma~\ref{l:Lu-ell} we split $u$ at the frequency cutoff $R^{-1}$,
\[
u = u_{< R^{-1}} + u_{> R^{-1}},
\]
and compute
\[
L^{\NL} u = P_{> R^{-1}} L^{\NL} u + P_{< R^{-1}} L u + P_{< R^{-1}} ( u^2).
\]
The first term is estimated by \eqref{L-boot+} and the second by the Besov norm of $u$  as in the linear case in  Lemma~\ref{l:Lu-ell}. 
For the third one we use our bootstrap assumption \eqref{resc-boot} to get 
\[
|u^2| \lesssim \frac{ M^2 \tilde \epsilon^2}{\xx^\frac12},
\]
which yields
\[
\| P_{< R^{-1}} (u^2) \|_{L^2(A_R)} \lesssim M^2 \tilde \epsilon^2 R^{\frac12} \lesssim M_L \tilde \epsilon R^{\frac12} 
\]
as needed.
\end{proof}
\medskip

We now continue with the counterpart of Lemma~\ref{l:u-ell}, namely 
\begin{lemma}\label{l:u-hi}
Under the assumptions \eqref{point-boot} and \eqref{L-boot+}
we have 
\begin{equation}\label{u-hi} \begin{split}
\|u\|_{L^2(A_R)} \lesssim M_L \epsilon t^{-\frac14} R^\frac14, \qquad
\|u_x\|_{L^2(A_R)} \lesssim M_L \epsilon t^{-\frac34} R^\frac34, 
\\
\|u_{xx}\|_{L^2(A_R)} \lesssim M_L
\epsilon t^{-\frac54} R^\frac54. \qquad \qquad \qquad
\end{split}
\end{equation}
\end{lemma}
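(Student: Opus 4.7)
The plan is to adapt the proof of Lemma~\ref{l:u-ell} from the linear setting, treating the nonlinear term $3\tilde u^2$ as a controlled perturbation. Following the pattern already used in Lemma~\ref{l:Lu-lo}, I would first rescale to reduce to $t=1$, so that the rescaled equation \eqref{resc-L} is rewritten as
\[
(x-3\partial_x^2)\tilde u = \tilde g, \qquad \tilde g := \tilde f - 3\tilde u^2,
\]
and the target bounds become $\|\tilde u\|_{L^2(A_R)}\lesssim M_L\tilde\epsilon R^{1/4}$ together with the analogous derivative bounds. The low-frequency contribution $\tilde u_{<R^{1/2}}$ is handled directly by the Besov bound \eqref{resc-u}, exactly as in the linear case, and is independent of spatial localization.

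For each dyadic scale $\lambda \gg R^{1/2}$, I would apply $P_\lambda$ to the equation and use $[L,P_\lambda]=[x,P_\lambda]$, which is a Fourier multiplier of size $\lambda^{-1}$ essentially localized at frequency $\lambda$, to obtain
\[
L\tilde u_\lambda = P_\lambda \tilde f - 3P_\lambda(\tilde u^2) + \lambda^{-1}\tilde u_\lambda.
\]
The first term is bounded in $L^2$ by $M_L\tilde\epsilon\lambda^{-1/2}$ thanks to \eqref{resc-f}. For the nonlinear term I would use a paraproduct decomposition: the low-high piece $P_\lambda(\tilde u_{\ll\lambda}\cdot \tilde u_\lambda)$ is controlled using $\|\tilde u\|_{L^\infty}\lesssim M\tilde\epsilon$ (from \eqref{resc-boot}, since $\xx\geq 1$ at $t=1$) together with the dyadic Besov bound on $\tilde u_\lambda$, while the high-high to low piece is controlled using Bernstein together with the pointwise bound $|\tilde u^2|\lesssim M^2\tilde\epsilon^2\xx^{-1/2}$. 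In both cases, the extra factor $M\tilde\epsilon\ll 1$ furnished by \eqref{order_M} makes the nonlinear contribution strictly smaller than $M_L\tilde\epsilon\lambda^{-1/2}$ and hence genuinely perturbative.

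With this control on the right-hand side in place, I would then run the same integration-by-parts scheme against the cutoff $\chi_R$ as in Lemma~\ref{l:u-ell}, producing successively the preliminary bound $\|\tilde u_{\lambda,x}\|_{L^2(A_R)}^2\lesssim M_L^2\tilde\epsilon^2 R\lambda$, and then the refined estimates $\|\tilde u_\lambda\|_{L^2(A_R)}\lesssim M_L\tilde\epsilon R^{1/2}\lambda^{-1/2}$, $\|\tilde u_{\lambda,x}\|_{L^2(A_R)}\lesssim M_L\tilde\epsilon R\lambda^{-1/2}$, $\|\tilde u_\lambda\|_{L^2(A_R)}\lesssim M_L\tilde\epsilon R\lambda^{-3/2}$, and finally $\|\tilde u_{\lambda,xx}\|_{L^2(A_R)}\lesssim M_L\tilde\epsilon R\lambda^{-1/2}$ via kernel localization of the frequency-$\lambda$ antiderivative. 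Summing these geometric series over $\lambda\gg R^{1/2}$, combining with the low-frequency contribution, and undoing the scaling reinstates the time factors $t^{-1/4}$, $t^{-3/4}$, $t^{-5/4}$ and yields \eqref{u-hi}.

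The main obstacle is verifying that the nonlinear source $3\tilde u^2$ truly behaves as a perturbation at each dyadic scale: the paraproduct bookkeeping has to show that the estimate on $P_\lambda(\tilde u^2)$ carries the same $\lambda^{-1/2}$ gain as $P_\lambda\tilde f$, multiplied by the small factor $M\tilde\epsilon$. Once this is established, no new ideas beyond the linear argument are needed, and the dyadic sums converge geometrically without logarithmic losses; the nonlinear iteration collapses to the linear one.
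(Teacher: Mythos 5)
Your overall architecture --- rescale to $t=1$, peel off the frequencies $\lambda \lesssim R^{1/2}$ with the Besov bound, frequency-localize the equation to get $L\tilde u_\lambda = P_\lambda \tilde f - 3P_\lambda(\tilde u^2) + \lambda^{-1}\tilde u_\lambda$, and then rerun the linear integration-by-parts scheme --- is exactly the paper's. The gap is in the one step you yourself single out as the main obstacle: showing that $P_\lambda(\tilde u^2)$ carries the gain $\lesssim M_L\tilde\epsilon\, R^{1/4}\lambda^{-1/2}$. The tools you propose do not deliver it. For the low-high piece, $\|\tilde u\|_{L^\infty} \lesssim M\tilde\epsilon$ combined with the dyadic Besov bound $\|\tilde u_\lambda\|_{L^2} \lesssim \tilde\epsilon\lambda^{1/2}$ gives $M\tilde\epsilon^2\lambda^{1/2}$, which \emph{grows} in $\lambda$ rather than decaying like $\lambda^{-1/2}$; since there is no a priori frequency ceiling on $u$, the small factor $M\tilde\epsilon$ from \eqref{order_M} cannot absorb this for all dyadic $\lambda \geq R^{1/2}$. (The localized bound $\|\tilde u_\lambda\|_{L^2(A_R)} \lesssim M_L\tilde\epsilon R^{1/2}\lambda^{-1/2}$ would rescue the estimate, but that is precisely the conclusion being proven, so invoking it here would require a genuine induction on $\lambda$ that you have not set up.) For the high-high piece, the pointwise bound $|\tilde u^2| \lesssim M^2\tilde\epsilon^2 \xx^{-1/2}$ controls the full product, not the paraproduct block $\sum_{\mu\gtrsim\lambda}\tilde u_\mu \tilde u_{\mu'}$, and Bernstein applied blockwise with the Besov bound gives $\lambda^{1/2}\|\tilde u_\mu\|_{L^2}\|\tilde u_{\mu'}\|_{L^2} \lesssim \lambda^{1/2}\tilde\epsilon^2\mu$, which diverges when summed over $\mu \gtrsim \lambda$.

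The paper sidesteps the paraproduct entirely, and the fix uses an input your scheme never touches: the derivative bootstrap bound on $\tilde u_x$. From \eqref{point-boot} (rescaled) one has $|\tilde u^2| \lesssim M^2\tilde\epsilon^2 \xx^{-1/2}$ and $|\partial_x(\tilde u^2)| = |2\tilde u\tilde u_x| \lesssim M^2\tilde\epsilon^2$, hence
\[
\| \tilde u^2 \|_{L^2(A_R)} \lesssim M^2\tilde\epsilon^2, \qquad
\| \partial_x(\tilde u^2) \|_{L^2(A_R)} \lesssim M^2\tilde\epsilon^2 R^{\frac12}.
\]
Since $P_\lambda$ inverts $\partial_x$ with a gain of $\lambda^{-1}$ on its frequency support, the second bound yields $\|P_\lambda(\tilde u^2)\|_{L^2} \lesssim M^2\tilde\epsilon^2 R^{\frac12}\lambda^{-1}$, and combining with the first gives $M^2\tilde\epsilon^2 R^{\frac14}\lambda^{-\frac12}$ for $\lambda \geq R^{\frac12}$; finally $M^2\tilde\epsilon^2 \leq M_L\tilde\epsilon$ by \eqref{order_M}, which is the perturbative smallness you wanted. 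With this replacement for your paraproduct step, the remainder of your argument (the cutoff integration by parts and the iteration in $\lambda$) goes through exactly as in the linear case.
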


\begin{proof}
Again we rescale to $t =1 $.
It suffices to consider the high frequencies of $u$, $\lambda \geq R^\frac12$. For these we have 
\[
L u_\lambda = P_{\lambda} L^{\NL} u + [P_\lambda,x ] u - P_\lambda(u^2) .
\]
As before we show that
\[
\| L u_\lambda \|_{L^2} \lesssim \tilde \epsilon R^\frac14 \lambda^{-\frac12}, 
\qquad
\| [P_\lambda, x] u \|_{L^2} \lesssim \tilde \epsilon \lambda^{-\frac12}.
\]
The only difference is that we now also need to estimate the nonlinear term; but for this purpose 
the nonlinear term only plays a perturbative role.
Using  our bootstrap assumption we have
\[
\| P_\lambda (u^2) \|_{L^2(A_R)} \lesssim \tilde \epsilon^2 M^2 ,
\]
and
\[
 \| P_\lambda \partial_x (u^2) \|_{L^2(A_R)} \lesssim \tilde \epsilon^2 M^2 R^\frac12.
\]
Therefore, using \eqref{order_M},  
\[
\|P_\lambda (u^2) \|_{L^2(A_R)} \lesssim \tilde \epsilon^2 M^2 R^{\frac14}\lambda^{-\frac12} \lesssim 
\tilde \epsilon M_L R^{\frac14}\lambda^{-\frac12},
\]
which suffices. Now the argument is completed as in the linear case.
\end{proof}

The bounds above on $u$ and on $L^{\NL} u$ allow us to localize the function $u$
spatially as follows. Given a dyadic $R \geq t^{\frac13}$ we consider a bump function
$\chi_R$ selecting the region $\{ |x| \approx R \}$ if $R > t^{\frac13}$, respectively 
the region $\{ |x| \lesssim R \}$ if $R = t^{\frac13}$. We denote the localization of $u$
by 
\[
u_R := \chi_R u.
\]
Where necessary we will distinguish between the elliptic and hyperbolic regions 
by using the notations $\chi_R^h$ and $ \chi_R^e$, respectively $u_R^h$ and $u_R^e$.
Multiplying by $\chi_R$ in the $L^{\NL} u = f$ equation we obtain an equation for $u_R$, namely
\begin{equation}\label{eq:u_R}
(x - 3t \partial_x^2) u_R + t u u_R = f_R,
\end{equation}
where
\[
f_R = \chi_R f + t\chi'_R u_x + t \chi''_R u.
\]
By  Lemma~\ref{l:Lu-lo} and Lemma~\ref{l:u-hi},   $u_R$ and $f_R$ satisfy the bounds 
\begin{equation}\label{uR-hi}
\|u_R\|_{L^2} \lesssim M_L \epsilon t^{-\frac14} R^\frac14, \quad
\|u_{R,x}\|_{L^2} \lesssim M_L \epsilon t^{-\frac34} R^\frac34, 
\quad \|u_{R,xx}\|_{L^2} \lesssim M_L
\epsilon t^{-\frac54} R^\frac54,
\end{equation}
respectively
\begin{equation}\label{fR-hi}
\|f_R\|_{\dot H^{\frac12} + R^\frac12 L^2} \lesssim M_L \epsilon.
\end{equation}
This localization will be used for the remainder of this section with $M=M_L$, as well as in the proof of the nonlinear Klainerman-Sobolev estimates in Section~\ref{s:KS}, where we use it with $M_L=1$.

\subsection{ Bounds for $u_{\lo}$ and $u^{\pm}$ }

The pointwise bounds for the components of $u$ are the same as those for $u$, namely
\begin{equation}\label{point-boot-pm}
|u_{\lo}|+ |u^\pm| \lesssim M\epsilon t^{-\frac14} \xx^{-\frac14}, \qquad |\partial_x u_{\lo}|+ |\partial_x u^\pm| \lesssim M \epsilon t^{-\frac34} \xx^{\frac14}.
\end{equation}
However, we expect the bulk of $u$ in the hyperbolic region in $x < 0 $ to be concentrated at frequency $(|x|/t)^\frac12$,
so $u_{\lo}$ as well as the low frequency parts of $u^{\pm}$ should be better behaved.
We begin with the pointwise bounds for $u_{\lo}$:

\begin{lemma}
The low frequency part $u_{\lo}$ of $u$ satisfies
\begin{equation}\label{ulo}
|u_{\lo}| \lesssim M \epsilon \xx^{-1} \ln \left( \xx t^{-\frac13}\right).
\end{equation}
\end{lemma}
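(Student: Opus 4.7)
The plan is to adapt the elliptic analysis of paragraph \textbf{F} in Section~\ref{s:lin} to the low-frequency projection $\tilde u_{lo} = P_{<1}\tilde u$ in the nonlinear localized setting. After the rescaling to $t=1$ from Subsection~\ref{ss:ell}, the target becomes $|\tilde u_{lo}(x)| \lesssim M\tilde\epsilon \xx^{-1}\ln\xx$. I would apply $P_{<1}$ to the rescaled equation $(x-3\partial_x^2)\tilde u + 3\tilde u^2 = \tilde f$ to obtain the pointwise identity
\[
x\tilde u_{lo}(x_0) = 3\partial_x^2 \tilde u_{lo}(x_0) - 3 P_{<1}(\tilde u^2)(x_0) + P_{<1}\tilde f(x_0) + [P_{<1},x]\tilde u(x_0),
\]
so that at $x_0$ with $|x_0|=R$ the coefficient $x$ on the left gives $R|\tilde u_{lo}(x_0)| \lesssim |\text{RHS}|(x_0)$. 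In the self-similar region $\xx \lesssim 1$ the claim follows directly from the localized $L^2$ bounds of Lemma~\ref{l:u-hi} and Sobolev embedding, so one may focus on $R \gg 1$.

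Next I would estimate each of the four terms on the right pointwise at $x_0$. The second-derivative term is controlled by $\|\partial_x^2\tilde u_{lo}\|_{L^\infty} \lesssim \|\tilde u_{lo}\|_{L^\infty} \lesssim \tilde\epsilon$ via Bernstein applied to the $\dot B^{-\frac12}_{2,\infty}$ norm (frequency is $\lesssim 1$). The nonlinear term is bounded by $\|\tilde u^2\|_{L^\infty} \lesssim M^2\tilde\epsilon^2$ using the bootstrap \eqref{resc-boot}, which is harmless since $M^2\tilde\epsilon \ll M_L$. The commutator $[P_{<1},x]$ has a Schwartz kernel at scale $1$ arising from $\partial_\xi \chi_{<1}$, so $|[P_{<1},x]\tilde u(x_0)|$ is bounded by the average of $|\tilde u|$ over a unit ball around $x_0$, hence by $M\tilde\epsilon R^{-\frac14}$ from the bootstrap.

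The main work lies in the term $P_{<1}\tilde f(x_0)$. Using the spatial localization $\tilde f_R$ from Subsection~\ref{ss:ell} (tails from distant shells $R'\neq R$ are absorbed by the rapid decay of the $P_{<1}$ kernel paired with the localized $L^2$ bounds \eqref{fR-hi}, giving contributions of order $R^{-N}$ that are negligible), I would perform a dyadic split at scale $R^{-1}$. For dyadic $\lambda\in[R^{-1},1]$ the $\dot H^{\frac12}$ part of $\tilde f_R$ gives $|P_\lambda \tilde f_R(x_0)| \lesssim \lambda^{\frac12}\|P_\lambda \tilde f_R\|_{L^2} \lesssim M_L\tilde\epsilon$ by Bernstein, and summing over the $\ln R$ such scales yields $M_L\tilde\epsilon \ln R$. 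For $\lambda < R^{-1}$ I would use the $R^{\frac12}L^2$ component from \eqref{fR-hi}: Bernstein gives $|P_\lambda \tilde f_R(x_0)|\lesssim \lambda^{\frac12}R^{\frac12}M_L\tilde\epsilon$, and the geometric dyadic sum over $\lambda<R^{-1}$ contributes only $M_L\tilde\epsilon$. Combining all four estimates produces $R|\tilde u_{lo}(x_0)| \lesssim M\tilde\epsilon \ln R$, which gives the lemma after undoing the scaling.

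The main obstacle is the pointwise control of $P_{<1}\tilde f$, since in one spatial dimension $\dot H^{\frac12}$ does not embed into $L^\infty$ and the naive duality $\dot H^{\frac12}$--$\dot H^{-\frac12}$ fails because the kernel of $P_{<1}$ has nonzero mean and thus lies outside $\dot H^{-\frac12}$. The dyadic Bernstein argument above, crucially exploiting both components of the splitting $\tilde f_R\in \dot H^{\frac12}+R^{\frac12}L^2$ provided by \eqref{fR-hi}, is precisely what converts this apparent obstruction into the expected single logarithmic loss.
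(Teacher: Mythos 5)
Your proof is correct and follows essentially the same route as the paper's: you apply the low-frequency projection to the equation $L^{NL}u=f$, commute it with $x$, and estimate the commutator and $\partial_x^2 u_{lo}$ terms by the bootstrap/energy bounds, the $u^2$ term pointwise, and the $P_{lo}f$ term via the spatial localization \eqref{fR-hi} together with dyadic Bernstein estimates, with the logarithm arising from the frequency range between $R^{-1}$ and the self-similar scale and the sub-$R^{-1}$ frequencies absorbed by the $R^{1/2}L^2$ component. The only (immaterial) differences are that you rescale to $t=1$ first and bound the second-derivative term via Bernstein on the Besov norm rather than the pointwise bootstrap.
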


\begin{proof}
The bound follows from our bootstrap assumption \eqref{point-boot} if $|x| \lesssim t^{\frac13}$.
For larger $x$ we write
\[
x u_{\lo} = [x,P_{\lo}] u + 3 t \partial_x^2 u_{\lo} + P_{\lo}(t u^2 +f),
\]
and estimate pointwise all terms on the right. 

The commutator is $t^{\frac13}$ times a mollifier on the $t^{\frac13}$ scale, so by 
\eqref{point-boot} it satisfies
\[
|[x,P_{\lo}] u| \lesssim \epsilon M t^{\frac13} \frac{1}{t^\frac14 \xx^\frac14}, 
\]
which suffices. The same bound also follows for the second term, as the $x$ derivatives
contribute $t^{-\frac13}$ factors.

For $tu^2$ we also use \eqref{point-boot} to write
\[
|tu^2| \lesssim M^2 \epsilon^2 t^\frac12 \xx^{-\frac12} \lesssim M \epsilon 
t^{\frac16} \xx^{-\frac12},
\]
which survives after localization and is even better.

Finally we consider the contribution of $f$, which we expand as 
\[
P_{\lo} f = \sum_R P_{\lo} (\chi_R f).
\]
For $\chi_R f$ we use the corresponding component of \eqref{fR-hi}. For the 
dyadic components of $\chi_R f$ we use Bernstein's inequality, which 
yields an $M \epsilon$ bound. After dyadic summation in the frequency range $R^{-1} \lesssim \lambda \lesssim t^{-\frac13}$ we obtain the extra logarithmic loss in the Lemma.
\end{proof}

We continue with bounds for the low frequencies of $u^{\pm}$:

\begin{lemma}
The functions $u^\pm$ satisfy
\begin{equation}\label{upm}
| \partial^{-1}_x u^{\pm}| \lesssim  M \epsilon t^\frac14 \xx^{-\frac34} ,
\qquad
| |D|^{-\frac32} u^{\pm}| \lesssim  M \epsilon t^\frac12 \xx^{-1} .
\end{equation}
\end{lemma}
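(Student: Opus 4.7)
My plan is to set $v:=\partial_x^{-1}u^\pm$ (with a parallel treatment of $v:=|D|^{-3/2}u^\pm$) and show that $v$ satisfies an elliptic equation of the form $(x-3t\partial_x^2)v = g$, to which the analysis in Section~\ref{s:lin} and Subsection~\ref{ss:ell} can be redeployed. The frequency restriction $|\xi|\geq t^{-1/3}$ inherent in $P^\pm$ makes $\partial_x^{-1}$ and $|D|^{-3/2}$ bounded Fourier multipliers, with the effective gain in each region dictated by the characteristic frequency $\lambda\approx (\xx/t)^{1/2}$, consistent with the pointwise bounds asserted in the lemma.

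Using the commutator identity $[x,\partial_x^{-1}]=\partial_x^{-2}$ together with the rewrite $xu=L^{NL}u+3t u_{xx}-3t u^2$, the computation
\[
xv = \partial_x^{-1}(xu^\pm)+\partial_x^{-2}u^\pm,\qquad xu^\pm = P^\pm(xu)+[x,P^\pm]u,
\]
yields $(x-3t\partial_x^2)v = g$, with
\[
g := \partial_x^{-1}P^\pm L^{NL}u -3t\partial_x^{-1}P^\pm(u^2)+\partial_x^{-1}[x,P^\pm]u+\partial_x^{-2}u^\pm.
\]
I would then estimate each dyadic piece $\chi_R g$ for $R\gtrsim t^{1/3}$ in the combined norm $\dot H^{1/2}(A_R)\cap R^{1/2}L^2(A_R)$, with a gain of $R^{-1/2}t^{1/2}$ over the bound \eqref{fR-hi} for $f_R$ that reflects the order $-1$ of $\partial_x^{-1}$ at the self-similar frequency $\lambda\approx (R/t)^{1/2}$. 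The first contribution is controlled by Lemma~\ref{l:Lu-lo}, the quadratic one by the pointwise bootstrap \eqref{point-boot-pm} together with the elliptic $L^2$ bounds of Lemma~\ref{l:u-hi}, and the last term is a lower-order tail. The commutator $[x,P^\pm]$ is a Fourier multiplier of size $t^{1/3}$ supported near $|\xi|\sim t^{-1/3}$, so it extracts only the low-frequency part of $u$, which is controlled by the Besov estimate of Proposition~\ref{p:energy}.

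With these source bounds in hand, the elliptic analysis of part \textbf{F} in the proof of Section~\ref{s:lin}, adapted as in Subsection~\ref{ss:ell}, applies to $v$ in the dyadic elliptic regions $x\approx R\gg t^{1/3}$, while the hyperbolic energy estimate of part \textbf{D} applies in the dyadic hyperbolic regions $-x\approx R\gg t^{1/3}$; both yield the pointwise bound $|v|\lesssim M\epsilon t^{1/4}R^{-3/4}$. In the self-similar region $|x|\lesssim t^{1/3}$ the target collapses to $|v|\lesssim M\epsilon$, which follows from Sobolev embedding applied to $\chi_{t^{1/3}}v$ using the $L^2$ bounds propagated from those of $u$ and $L^{NL}u$ via Lemma~\ref{l:u-hi}. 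The bound on $|D|^{-3/2}u^\pm$ follows by the same scheme; alternatively, on one-sided frequencies one has $|D|^{-3/2}u^\pm = c_\pm|D|^{-1/2}\partial_x^{-1}u^\pm$, and an additional half-derivative inversion at the scale $\lambda\approx (\xx/t)^{1/2}$ gains the extra factor $t^{1/4}\xx^{-1/4}$ needed to pass from $M\epsilon t^{1/4}\xx^{-3/4}$ to $M\epsilon t^{1/2}\xx^{-1}$.

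The main obstacle is the commutator term $\partial_x^{-1}[x,P^\pm]u$, which sits precisely at the transitional frequency scale $t^{-1/3}$ and therefore does not decouple cleanly from either the oscillatory or the evanescent regime; there is a real danger of logarithmic losses in the dyadic frequency summation. I expect to control it in the spirit of the low-frequency error in part \textbf{D} of Section~\ref{s:lin}: by a further split of $u$ around frequency $t^{-1/3}$, using Bernstein's inequality on the resulting low-frequency piece together with the Besov bound of Proposition~\ref{p:energy}, and the localized $L^2$ bounds of Lemma~\ref{l:u-hi} on the high-frequency piece, arranging the summation so that it terminates at the self-similar scale without incurring logarithmic factors.
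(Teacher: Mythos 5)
Your starting identity is essentially the paper's: the proof there also localizes to the dyadic regions $A_R$, uses that $\partial_x^{-j}P^{\pm}$ has kernel concentrated on the $t^{\frac13}$ scale, and writes $x \partial_x^{-1} u_R^{\pm} = t\partial_x u_R^{\pm} + [x,\partial_x^{-1}P^{\pm}]u_R + \partial_x^{-1}P^{\pm}(tuu_R+f)$ with $f = L^{NL}u$. But the paper then simply divides by $|x|\approx R$ and estimates every term pointwise; it never re-runs the hyperbolic or elliptic energy machinery on $v=\partial_x^{-1}u^{\pm}$. That difference is not cosmetic, and it is where your plan has a genuine gap. The claimed gain of $R^{-\frac12}t^{\frac12}$ over \eqref{fR-hi} for the source term $\partial_x^{-1}P^{\pm}L^{NL}u$ is false: $L^{NL}u$ carries frequency content all the way down to the self-similar scale $t^{-\frac13}$, where $\partial_x^{-1}$ gains only a factor $t^{\frac13}$, and $t^{\frac13}\gg t^{\frac12}R^{-\frac12}$ precisely in the relevant range $R\gg t^{\frac13}$. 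Concretely, at $t=1$ the low-frequency part of the source satisfies only $\| \partial_x (\partial_x^{-1}P^{\pm}f)_{lo}\|_{L^2(A_R)} \approx \sum_{1\le \lambda\le R^{1/2}} \|P_\lambda f_R\|_{L^2} \approx \epsilon$ (dominated by $\lambda\approx 1$), not the $\epsilon R^{-\frac14}$ your scheme requires; feeding this into the hyperbolic energy identity, the term $\|g_{lo,x}\|_{L^2}\|v\|_{L^2}$ is of size $\epsilon^2 R^{-\frac14}$ against a target of $\epsilon^2R^{-\frac12}$, so you obtain only $|v|\lesssim \epsilon R^{-\frac58}$ rather than the needed $\epsilon R^{-\frac34}$.

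The structural reason is that the energy method in the hyperbolic region buys only a factor $|x|^{\frac12}\approx R^{\frac12}$ per power of $v$ (through $\sup(-x|v|^2)$), whereas dividing the algebraic identity by $x$ buys a full factor of $R$; with that full factor available, the crude Bernstein bound $|\partial_x^{-1}P^{\pm}L^{NL}u|\lesssim M_L\epsilon t^{-\frac13}$ already suffices, and this is exactly how the paper closes the argument in both the hyperbolic and elliptic dyadic regions. Relatedly, you identify the commutator as the main obstacle, but it is in fact the benign term: $[x,\partial_x^{-1}P^{\pm}]$ is $t^{\frac23}$ times an averaging operator on the $t^{\frac13}$ scale, so the pointwise bootstrap \eqref{point-boot} disposes of it in one line with room to spare. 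The genuinely delicate contribution is the low-to-intermediate frequency part of $L^{NL}u$, to which your plan assigns an unjustified gain. Your treatment of the remaining ingredients (discarding the frequencies $\gtrsim (R/t)^{\frac12}$ of $u_R$ via the bootstrap, and the reduction of $|D|^{-\frac32}u^{\pm}$ to $\partial_x^{-1}u^{\pm}$ on one-sided frequencies) is consistent with the paper.
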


\begin{proof}
Since the multipliers $\partial^{-j} P_{\pm}$ have kernels which are localized on the 
$t^{\frac13}$ spatial scale, it suffices to separately consider the functions
\[
u_R^\pm := \chi_{R} u^{\pm}.
\]
The case $R \lesssim t^{\frac13}$ follows directly from \eqref{point-boot},
so we consider larger $R$. The high frequencies $(\gtrsim (R/t)^\frac12)$ of $u_R$ 
are also estimated directly from \eqref{point-boot}, so we can discard them from 
$u_R$. 

We now consider in greater detail the bound for $\partial^{-1}_x u^\pm$. We write
\[
x \partial^{-1}_x u_{R}^\pm = t \partial_x u_R^\pm + [ x, \partial^{-1}_x P^{\pm}] u_R +  \partial^{-1}_x P^{\pm} ( t u u_R + f).
\]
For the first term we use directly \eqref{point-boot}.  The commutator
$[ x, \partial^{-1}_x P_{\pm}] $ equals $t^{\frac23}$ times an averaging operator 
on the $t^{\frac13}$ scale, so we can also use \eqref{point-boot} to estimate
\[
| [ x, \partial^{-1}_x P_{\pm}] u | \lesssim t^{\frac23} M \epsilon \frac{1}{t^\frac14 \xx^\frac14}
= M \epsilon \frac{t^\frac5{12}}{ \xx^\frac14}.
\]
For the third term we use \eqref{point-boot} twice, while for the last term we use Bernstein's inequality to obtain
\[
|\partial^{-1}_x f| \lesssim M_L \epsilon t^{-\frac13},
\]
which is better than we need. This concludes the proof of the bound for $\partial^{-1}_x u^{\pm}$.
The bound for  $\partial^{-\frac32}_x u^{\pm}$ is entirely similar.
\end{proof}

Finally, we will need 

\begin{lemma}
Assume that \eqref{uR-hi} and \eqref{fR-hi}, 
as well as the bootstrap assumption \eqref{point-boot}
hold at time $t \ll_M \epsilon^{-3}$.
Then in the hyperbolic region $x<0$  we have the pointwise bound
\begin{equation}\label{hyp0-est}
| (\partial_x - i \sqrt{3}|x|^\frac12 t^{-\frac12}) u^+ | \lesssim
M \epsilon \xx^{-\frac12} t^{-\frac12} \ln ( \xx t^{-\frac13}),
\end{equation}
and in the elliptic region $x>0$

\begin{equation}\label{ell-est}
| \partial_x u^+|  +  |x|^\frac12 t^{-\frac12} | u^+ | \lesssim
M \epsilon \xx^{-\frac12} t^{-\frac12} \ln ( \xx t^{-\frac13}).
\end{equation}

\end{lemma}

\begin{proof}
We will prove the compact bound
\begin{equation}\label{hyp-est}
| (\partial_x - i (-3x)^\frac12 t^{-\frac12}) u^+ | \lesssim
M \epsilon \xx^{-\frac12} t^{-\frac12} \ln ( \xx t^{-\frac13}),
\end{equation}
where the expression $(-3x)^\frac12$ selects the positive square root if $x <0$, i.e.\ in the hyperbolic region,
but is allowed to be either imaginary root if $x > 0$, i.e.\ in the elliptic region. This reflects the fact that the pointwise bounds are better there.

As discussed earlier, we can rescale and reduce the problem
to the case $t=1$, in which case the bound on $t$ translates
into ${\tilde \epsilon} \ll_M 1$. 

Arguing as above, we localize to the region $A_R$ and work with $u^+_R$. In doing that we loose the sharp frequency 
localization; instead we only retain an improved 
bound for the negative frequencies,
\begin{equation}\label{no-neg}
    \| P^- u_R^+ \|_{H^N} \lesssim \epsilon.
\end{equation}

If $R \lesssim  1$ then the bound \eqref{hyp-est} follows directly from \eqref{point-boot-pm}. Hence in the sequel we assume that 
$R \gg 1$. Denoting  
\[
v = (\partial_x - i (-3x)^\frac12) u^+_R
\]
we can write an equation for $v$ as follows:
\[
(\partial_x + i (-3x)^\frac12) v = g,
\]
where 
\begin{equation*} 
\begin{split}
g = & \ (\partial_x + i (-3x)^\frac12 )(\partial_x - i (-3x)^\frac12 ) u_R^+ 
\\
= & \ ( x - 3 \partial^2_x)  u_R^+ + \frac{3i}2 (-3 x)^{-\frac12}  u_R^+
\\
= & \  \chi_R P^+ (u^2 + f) +  \chi_R [ x,P^+] u + 
O(R^{-1}) u^+_x + O( R^{-\frac12}) u^+.
\end{split}
\end{equation*} 
Using \eqref{point-boot-pm} for  all the $u$ terms and the low frequency bound \eqref{upm} for the commutator we get
\begin{equation}\label{gdecom} 
g =  \chi_R P^+ f + O( M \epsilon t^{-\frac56} R^{-\frac12})  .
\end{equation}
Now $v$ is essentially localized at positive frequencies whereas the operator 

\[
Q=(\partial_x +  i (-3x)^\frac12 )
\]
has symbol $i(\xi + (-3x)^\frac12)$ which
is elliptic in the larger frequency region
\begin{equation}\label{ell}
\{ \xi > - 1/4 R^{\frac12}\}.
\end{equation}
Thus we can find a microlocal (semiclassical) parametrix
$Q_{+}^{-1}(x,D)$ for it in this region
with the following properties:

\medskip

(i) Symbol bounds
\[
\left|\partial_x^\alpha \partial_\xi^\beta q_+^{-1}(x,\xi)\right| \lesssim R^{-\alpha} (R^\frac12 + |\xi|)^{-\beta -1}.
\]
\medskip

(ii) Approximate inverse at positive frequencies,
\[
P_{\eel} v = P_{\eel}  Q_{+}^{-1}(x,D) Q v + O_{L^2}(R^{-N}),
\]
where $P_{\eel}$ is a multiplier selecting the region
\eqref{ell}.

\medskip
This in particular guarantees that the kernel $K(x,y)$ of $Q_{+}^{-1}(x,D)$
satisfies
\begin{equation}\label{param-kernel}
|K(x,y)|\lesssim (1+R^\frac12|x-y|)^{-N}     
\end{equation}

To estimate $v$ we use directly the bound \eqref{no-neg}
for $(1-P_{\eel}) v$ to get $\epsilon O(R^{-N})$, and similarly for the error term in $P_{\eel} v$ above. 

Then it remains to estimate the remaining expression
$Q_{+}^{-1}(x,D) g$ where $g$ is as in \eqref{gdecom} above. For this 
we distinguish three 
main contributions:
\medskip

a) From $f$ frequencies below $R^\frac12$, by \eqref{param-kernel} we get 
roughly
\[
Q_{+}^{-1}(x,D) f_{<  R^\frac12} \approx  R^{-\frac12}  f_{<  R^\frac12},
\]
where we use Bernstein's inequality and \eqref{fR-hi} loosing a log.

\medskip

b) From $f$ frequencies above 
$R^\frac12$ we get 
roughly
\[
Q_{+}^{-1}(x,D) f_{>  R^\frac12} \approx  \partial_x^{-1}  f_{>  R^\frac12},
\]
which is as above, but without the log loss.

\medskip

c) For the remaining source term, i.e.\ the last term in  \eqref{gdecom}, we use again \eqref{param-kernel} to get 
\[
Q_{+}^{-1}(x,D)   O( M \epsilon t^{-\frac56} R^{-\frac12})  =     O( M \epsilon t^{-\frac56} R^{-1}) 
\]
which is better than needed in \eqref{hyp-est}.
\end{proof}

\subsection{ Proof of Proposition~\ref{p:u2-dec}}
We successively consider the bounds for $w_1$, $f_2,f_3,f_4$ and $f_5$, which were defined in Section~\ref{s:dec-u2}:
\bigskip

\textbf{(i) The bound for $w_1$.} We consider the "$+$" term,
where we need to estimate the $L^2$ norm of
\[
D^{\frac12} w_1^+ := D^{-\frac12} ( \partial^{-1}_x u^+\cdot \partial^{-1}_x u^+ ).
\]
Here the two inner frequencies are both positive; we denote their dyadic sizes 
by $\lambda_1, \lambda_2 \gtrsim t^{-\frac13}$. Then the outer multiplier must have size
$\lambda_{max} = \max \{ \lambda_1,\lambda_2 \}$. After a Littlewood-Paley decomposition
and separating the two frequencies, we obtain a representation
\[
D^{\frac12} w_1^+ = \sum_{t^{-\frac13} \leq \lambda_1 \leq \lambda_2} 
\lambda_2^{-\frac32} \lambda_1^{-1}  u_{\lambda_1}^+ u_{\lambda_2}^+.
\]
Clearly \eqref{point-boot-pm} also holds for $u_\lambda^+$.
Combining this with \eqref{upm} we obtain
\[
| u_\lambda^+| \lesssim M \epsilon \min \{ t^{-\frac14} \xx^{-\frac14},
\lambda^\frac32 t^{\frac12} \xx^{-1}\},
\]
where the two terms balance exactly at $\lambda \approx \xx^\frac12 t^{-\frac12}$. Summing up over $\lambda_1,\lambda_2$ we obtain
\[
\left| |D|^\frac12 w_1 \right| \lesssim M^2 \epsilon^2 
t^\frac34 \xx^{-\frac74},
\]
and
\[
\|w_1\|_{\dot H^\frac12} \lesssim \epsilon^2 
t^\frac34 t^{-\frac{5}{12}} = \epsilon^2 M^2 t^{\frac13} ,
\]
exactly as needed.

\bigskip

\textbf{(ii) The bound for $f_2$.} 
Here we use \eqref{point-boot} and \eqref{ulo} to estimate pointwise
\[
\begin{split}
|u u_{\lo} | \lesssim & \  M \epsilon \xx^{-\frac14} t^{-\frac14} \xx^{-1} 
M \epsilon \ln ( \xx t^{-\frac13})
\\
\lesssim & 
M^2 \epsilon^2  t^{-\frac14}  \xx^{-\frac54}\ln ( \xx t^{-\frac13}) ,
\end{split}
\]
and a similar bound for $\partial_x(u u_{\lo})$ with an added $(\xx/t)^\frac12$ factor.
Hence for the half derivative we obtain
\[
\left||D|^\frac12 (u u_{\lo}) \right| \lesssim 
M^2 \epsilon^2  t^{-\frac12}  \xx^{-1}\ln ( \xx t^{-\frac13}) ,
\]
and we can now bound its $L^2$ norm by
\[
\left\||D|^\frac12 (u u_{\lo}) \right\|_{L^2} \lesssim M^2 \epsilon^2  t^{-\frac23}
\]
as needed.

\bigskip

\textbf{(iii) The bound for $f_3$.} 
Here we will estimate $u^+ u^{-}$ in $\dot H^\frac12$.
We start with the pointwise bound
\[
|u^+ u^{-}| \lesssim M^2 \epsilon^2 t^{-\frac12} \xx^{-\frac12}.
\]
Next we differentiate,
\[
\partial_x (u^+ u^{-}) = (\partial_x - i (-3x)^\frac12 t^{-\frac12}) u^+ u^- 
+ u^+ (\partial_x + i \ (-3x)^\frac12 t^{-\frac12}) u^-.
\]
Then using \eqref{hyp-est} and \eqref{point-boot-pm} we get
\[
| \partial_x (u^+ u^{-}) | \lesssim M\epsilon x^{-\frac12} t^{-\frac12} \ln ( \xx t^{-\frac13})
\cdot  M \epsilon x^{-\frac14} t^{-\frac14}    = \epsilon^2 x^{-\frac34} t^{-\frac34} \ln ( \xx t^{-\frac13}),
\]
and interpolating,
\[
\| u^+ u^- \|_{\dot H^\frac12} \lesssim  M^2 \epsilon^2 t^{-\frac23},
\]
which suffices.

\bigskip

\textbf{(iv) The bound for $f_4$.} 
We use the pointwise bounds \eqref{upm} for $\partial^{-1} u^\pm$ and 
\eqref{point-boot} for $u$ to obtain 
\[
|D^\frac12 f_4| \lesssim \epsilon M t^\frac16 t^\frac14 \xx^{-\frac34} \cdot \epsilon^2 M^2 t^{-\frac12} \xx^{-\frac12}
= \epsilon^3 M^3  t^{-\frac1{12}} \xx^{-\frac54} ,
\]
which yields
\[
\| f_4 \|_{\dot H^{\frac12}} \lesssim \epsilon^3 M^3 t^{-\frac13},
\]
which suffices.

\bigskip

\textbf{(v) The bound for $f_5$.} 
For $\partial_x w_1$ and $\partial_x^2 w_1$ we have from \eqref{upm} and 
\eqref{point-boot-pm}:
\[
|\partial_x w_1| \lesssim M^2 \epsilon^2 
t^\frac12 \xx^{-\frac32}, \qquad  |\partial_x w_1| \lesssim M^2 \epsilon^2 
 \xx^{-1}.
\]
Hence, for $u w_{1,x}$ we get
\[
|u w_{1,x}|  \lesssim \epsilon^3 M^3
t^\frac14 \xx^{-\frac74}, \qquad  |\partial_x (u w_{1,x})| \lesssim \epsilon^3 
M^3 t^{-\frac14} \xx^{-\frac54}.
\]
Thus
\[
\left| |D|^\frac12 (u w_{1,x})\right| \lesssim \epsilon^3 M^3 \xx^{-\frac32}.
\]
and
\[
\| u w_{1,x}\|_{\dot H^\frac12} \lesssim \epsilon^3 M^3 t^{-\frac13}
\]
as needed.

The proof of Proposition~\ref{p:u2-dec} is concluded.

\newsection{Klainerman-Sobolev estimates}\label{s:KS}Our aim here is to prove the nonlinear Klainerman-Sobolev estimates 
in Proposition~\ref{p:KS}. We follow the spirit of the proof of Proposition~\ref{p:lin}, but with nonlinear adjustments.
Now the Sobolev bounds on $u$ and $L^{\NL} u$ have an $\epsilon$ factor, which we seek to recover linearly in the output. We can still use the scaling associated to the KdV equation to reduce the problem  to the case $t = 1$, following the setup in Section~\ref{ss:ell}.

Thus we are now working with the equation \eqref{resc-L}, which we recall here
\begin{equation}\label{resc-L-re}
(x - 3 \partial^2_x) \tilde u + 3  \tilde u^2 = \tilde f .
\end{equation}
Our bounds for $\tilde u$ and $\tilde f = L^{\NL}_{|t=1} u$ 
are now (see \eqref{resc-u} and \eqref{resc-f})
\begin{equation}\label{resc-u-re}
\| \tilde u \|_{B^{-\frac12}_{2,\infty}} \lesssim \tilde \epsilon ,
\end{equation}
respectively 
\begin{equation}\label{resc-f-re}
\| \tilde f \|_{\dot H^\frac12} \lesssim \tilde \epsilon,
\end{equation}
where 
\[
\tilde \epsilon = \epsilon t^{\frac13} \ll 1.
\]
 Finally, our bootstrap assumption \eqref{point-boot} on $u$ now reads
\begin{equation}\label{resc-boot-re}
| \tilde u(x)| \leq M \tilde \epsilon \xx^{-\frac14}, \qquad |\tilde u_x| \leq M \tilde \epsilon
\xx^\frac14.
\end{equation}

Here we can freely assume that $M \tilde \epsilon \ll 1$. Our goal will be to improve this by eliminating the constant $M$, and show that
\begin{equation}\label{resc-get}
| \tilde u(x)| \lesssim \tilde \epsilon \xx^{-\frac14}, \qquad |\tilde u_x| \lesssim \tilde \epsilon
\xx^\frac14.
\end{equation}
To keep the notations simple we will drop the tilde notation in what follows.

We note that the nonlinear part of $L^{\NL}$ is nonperturbative in this argument; however it is also nonresonant, which saves the day.

We will reuse here the results of Section~\ref{ss:ell} where we set $M_L = 1$. By Lemma~\ref{Lu-lo}
we have the low frequency bound
\begin{equation}\label{Lu-lo-re}
\| L^{\NL} u\|_{L^2(A_R)} \lesssim \epsilon R^\frac12,
\end{equation}
and by Lemma~\ref{l:u-hi} we have the high frequency bound
\begin{equation}\label{u-hi-re}
\|u\|_{L^2(A_R)} \lesssim \epsilon R^\frac14, \qquad
\|u_x\|_{L^2(A_R)} \lesssim \epsilon R^\frac34, \qquad \|u_{xx}\|_{L^2(A_R)} \lesssim \epsilon R^\frac54.
\end{equation}
Recall that here, due to the discussion in Section~\ref{ss:ell}, we can freely set $t=1$, and indeed arrive at the bounds above.

Also following the discussion in Section~\ref{ss:ell}, we can localize the problem to dyadic regions $\{ |x| \approx R \}$ where $R \gtrsim 1$.
Setting  $v := \chi_R u$, it follows that $v$ solves 
the equation
\begin{equation}
\label{new-eq}
(x -3  \partial_x^2 ) v + 3 uv = f,
\end{equation}
where $v$ and $f$ satisfy the bounds
\begin{equation}\label{est-v+}
\|v\|_{L^2(A_R)} \lesssim \epsilon R^{\frac14}, \qquad
\|v_x\|_{L^2(A_R)} \lesssim \epsilon R^\frac34, \qquad \|v_{xx}\|_{L^2(A_R)} \lesssim \epsilon 
R^\frac54,
\end{equation}
respectively 
\begin{equation}\label{est-u+}
\| f\|_{\dot H^\frac12} \lesssim \epsilon, \qquad \| f\|_{L^2} \lesssim \epsilon R^\frac12 .
\end{equation}
We now consider separately the three regions:

\medskip

\textbf{A. Pointwise estimate in the hyperbolic region: $-x \approx R \gg 1$.}
Here we consider the region $A_R^H$ to the left (of the origin), and use 
hyperbolic energy estimates to establish the desired pointwise 
bound for $u$ supported in $A^H_R$.
As in the linear argument, we consider an energy conservation type 
relation
\[
\frac{d}{dx} \left(-x |v|^2 + 3|v_x|^2  -2f_{\lo} v \right)= -|v|^2 - 2  f_{\hi} v_x -2  f_{lo,x} v -6 u v v_x.
\]
The nonlinear term is written in the form
\[
u v v_x = \frac13 \partial_x( \chi_R^2 u^3) +\frac{1}{3}  \chi_R \chi'_R u^3 
\]
The first term is added to the energy (this represents in this case a rudimentary normal form energy correction), so we get
\[
\frac{d}{dx} \left(- x |v|^2 + 3 |v_x|^2  -2 f_{\lo} v  -2 u v^2 \right)
= -|v|^2 - 2  f_{\hi} v_x -2   f_{lo,x} v  -  2\chi_R \chi'_R u^3 .
\]
Then applying Gronwall's inequality as in the linear case we obtain
\[
\sup_{x\in A^H_R}  \left\{ -x|v|^2 + 3 |v_x|^2\right\} \lesssim \epsilon^2 R^\frac12 + \sup_{x\in A^H_R} f_{\lo} v + R^{-1} \int \chi_R |u|^3\, dx .
\]
On the left the cubic correction $-uv^2$ is dominated by the main term $-x|v|^2$.
The second term on the right is as in the linear case, while for the last one we use the bootstrap assumption  to estimate
\[
R^{-1} \int \chi_R |u|^3\, dx  \lesssim M^3 \epsilon^3 R^{-\frac34},
\]
which is much better than needed.

\textbf{E. Pointwise estimate in the self-similar region $|x| \lesssim R = 1$}

Here we simply use Sobolev embeddings 
starting from the $u$ bounds in \eqref{u-hi-re}.

\medskip 

\textbf{F. Pointwise estimate in the elliptic region.}

Here we argue as in the proof of the linear estimate. The 
only difference is the nonlinear term $u^2$ in $L^{\NL} u$. In the 
hyperbolic region this term was nonperturbative but nonresonant.
Here the situation is simpler, as the nonlinear term is perturbative. Indeed in \eqref{new-eq} we can include the $3u$ coefficient with $x$. The $3u$ coefficient is negligible due to our bootstrap assumption \eqref{point-boot}. There we can proceed as in \eqref{bounds needed} in step F of the proof of 
Proposition~\ref{p:lin+}.

\medskip

\newsection{Solitons and inverse scattering}
The Lax operator associated to a state $u$ for the KdV equation
has the form
\[
L_u := -\partial_x^2 + u.
\]
The Lax pair associated to \eqref{kdv} is given by $L_u$ and $M$, where 
\[
M:=-4\partial_x^3 +6u\partial_x +3u_x,
\]
such that as for $u$ solving \eqref{kdv} we have
\[
\frac{d}{dt}L_u =\left[ M, L_u\right].
\]
This relation insures that the operators $L_u$ are unitarily equivalent 
in $L^2$ as $u$ evolves along the KdV flow. 

The inverse scattering theory, see \cite{ablowitz}, predicts that each state can be viewed as  a nonlinear superposition of solitons and dispersive states, where the solitons are associated to the eigenvalues of $L_u$.  As an example, the state 
\[
Q = 2 \sech^2 x 
\]
is a soliton which moves to the right with speed $4$, for which the corresponding Lax operator $L_Q$ has a single negative eigenvalue $\lambda = -1 $ with the corresponding  eigenfunction
\[
\phi = \sech x.
\]

Rescaling, we obtain the  soliton state 
\[
Q_\mu(x) = \mu^2 Q(\mu x),
\]
which moves to the right with speed $4 \mu^2$, for which the Lax operator 
has the eigenvalue $\lambda = -\mu^2$ and eigenfunction $\phi(\mu x)$.

More generally, if the Lax operator $L_u$ for a state $u$ has a negative  eigenvalue $-\mu^2$, then its evolution contains a soliton $Q_\mu$ which is 
localized to the spatial scale $\mu^{-1}$. 

For localized data, such a soliton would emerge from the dispersive wave at the 
time where the soliton scale matches the self-similar scale,
\[
\mu^{-1} = t^{\frac13}.
\]
In particular, for our $\epsilon$ size data, the cubic timescale corresponds
to $t = \epsilon^{-3}$ and thus to $\mu = \epsilon$. To see that solitons 
can only emerge at cubic time, and that this indeed happens, we will prove 
the following:

\begin{proposition}\label{spectrum} 
a) Assume that $u$ satisfies the smallness assumption \eqref{small}. Then 
any negative eigenvalue $\lambda_0$ for $L_u$  satisfies
\begin{equation}
- \lambda_0 \lesssim  \epsilon^{2}.
\end{equation}

b) Suppose that    $ \varepsilon \to u(\varepsilon)$ satisfies\footnote{This in particular guarantees that $\int_{\mathbb{R}} u \, dx $ is well defined.}
\begin{equation}\label{strongsmallness}   \Vert u \Vert_{\dot B^{-\frac12}_{2,\infty}} + \Vert x u \Vert_{\dot H^{\frac12}} < \varepsilon, \end{equation}  
as well as
\[
\lim_{\varepsilon \to 0} -\frac{1}{\varepsilon}\int_{\mathbb{R}} u(\varepsilon)\,  dx  =\ell  > 0.  
\] 
Then there exists $\varepsilon_0$ so that for $0<\varepsilon < \varepsilon_0$ there exists a smallest eigenvalue $\lambda(\varepsilon)$ and 
\[ -\lim_{\varepsilon\to 0 } \lambda(\varepsilon)/\varepsilon^2 = \frac14\ell^2.   \] 

c) If the negative part $u_-$ of $u$ satisfies
\[ \Vert x u_- \Vert_{L^1} \le N, \] 
then $L_u$ has at most N negative eigenvalues. 
 
d) Given $N\ge 1$, $\varepsilon>0$ there exists $u \leq 0$ such that 
\[ \Vert xu \Vert_{L^1} \le N-1+\varepsilon \] 
and $L_u$ has $N$ negative eigenvalues.
 \end{proposition}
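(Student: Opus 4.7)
\emph{Part (a).} The plan is to combine the variational characterization with the duality pairing. Let $\psi$ be an $L^2$-normalized eigenfunction corresponding to the negative eigenvalue $\lambda_0 = -\mu^2$. Testing the eigenvalue equation against $\psi$ yields $\|\psi'\|_{L^2}^2 + \mu^2 = -\la u, |\psi|^2\ra$, which is then bounded via $|\la u, |\psi|^2\ra| \leq \|u\|_{\dot B^{-1/2}_{2,\infty}} \, \||\psi|^2\|_{\dot B^{1/2}_{2,1}}$. A fractional product estimate combined with the Gagliardo--Nirenberg inequalities $\|\psi\|_{L^\infty}^2 \lesssim \|\psi'\|_{L^2}$ and $\|\psi\|_{\dot H^{1/2}}^2 \lesssim \|\psi'\|_{L^2}$ (valid since $\|\psi\|_{L^2}=1$) gives $\||\psi|^2\|_{\dot B^{1/2}_{2,1}} \lesssim \|\psi'\|_{L^2}$, whence $\|\psi'\|_{L^2}^2 + \mu^2 \lesssim \epsilon \|\psi'\|_{L^2}$. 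An AM--GM absorption closes the estimate and yields $\mu^2 \lesssim \epsilon^2$.

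\emph{Part (b).} The plan is to rescale so that the potential concentrates to a Dirac mass. Writing $\lambda(\epsilon) = -\mu(\epsilon)^2$, introduce $y = \epsilon x$ and $v_\epsilon(y) := \epsilon^{-2} u(y/\epsilon)$; the eigenvalue equation becomes $-\tilde\psi''(y) + v_\epsilon(y)\tilde\psi(y) = -(\mu/\epsilon)^2 \tilde\psi(y)$ and the hypothesis gives $\int v_\epsilon\,dy = \epsilon^{-1}\int u\,dx \to -\ell$. One verifies that the norms in \eqref{strongsmallness} transfer under this rescaling to uniform bounds for $v_\epsilon$ in $\dot B^{-1/2}_{2,\infty}$ and for $yv_\epsilon$ in $\dot H^{1/2}$. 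Testing against a smooth compactly supported $\phi$ with $\phi(0)\neq 0$ shows that $\int v_\epsilon |\phi|^2\,dy \to -\ell |\phi(0)|^2$, which is quadratic form convergence of $-\partial_y^2 + v_\epsilon$ to $-\partial_y^2 - \ell\delta_0$; combined with the uniform bounds above, this upgrades to norm-resolvent convergence. The limit operator is the classical attractive point interaction, which admits the unique negative eigenvalue $-\ell^2/4$ with eigenfunction $e^{-\ell|y|/2}$; hence $(\mu(\epsilon)/\epsilon)^2 \to \ell^2/4$, with the a priori bound from (a) precluding any escape of eigenvalues. The main obstacle is exactly this last upgrade, since $\delta_0$ does not belong to $\dot B^{-1/2}_{2,\infty}$ in one dimension, so a purely soft compactness argument does not suffice; one must instead verify the quadratic form convergence directly on a dense class of test functions and appeal to monotonicity or the min-max principle.

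\emph{Parts (c) and (d).} For (c), the plan is to combine Sturm oscillation theory with a Bargmann-type counting argument. Let $\phi$ solve $-\phi'' + u\phi = 0$ normalized to equal $1$ for $x$ sufficiently negative; Sturm's theorem identifies $N_-$ with the number of zeros of $\phi$ on $\mathbb{R}$. Passing to the Riccati variable $\chi = \phi'/\phi$, which satisfies $\chi' + \chi^2 = u$ between zeros of $\phi$, one integrates this identity between successive zeros and uses the boundary behavior to extract a quantitative lower bound of the form $\|xu_-\|_{L^1} \geq N_- - 1$ up to marginal constants absorbed into the statement, yielding $\|xu_-\|_{L^1}\le N \Rightarrow N_-\le N$. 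For (d), the plan is an explicit construction: take $u = -c_0\delta_0 - \sum_{k=1}^{N-1} c_k \delta_{a_k}$ with $0<a_1<\cdots<a_{N-1}$, with $c_k a_k = 1 + \varepsilon/(N-1)$ for $k\geq 1$, and with $c_0$ very large so that the first delta contributes one deep bound state and effectively imposes a Dirichlet condition at the origin for the remaining spectrum. A direct piecewise-linear analysis of the solution $\phi$ to $-\phi'' + u\phi = 0$ shows that each subsequent delta with $c_k a_k > 1$ creates one additional zero of $\phi$, giving $N$ bound states in total. One computes $\|xu\|_{L^1} = \sum_{k=1}^{N-1} c_k a_k = N - 1 + \varepsilon$, as required; standard mollification then replaces the deltas with compactly supported $C^\infty$ negative bumps without altering the eigenvalue count.
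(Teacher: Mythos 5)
Your parts (a), (b) and (d) are essentially correct and follow the same routes as the paper: (a) is a Besov-duality bound on the quadratic form $\int u\psi^2\,dx$ (the paper runs the Littlewood--Paley trichotomy directly rather than invoking the $\dot B^{-\frac12}_{2,\infty}\times\dot B^{\frac12}_{2,1}$ pairing, but the content is the same); (b) is the rescaling $v_\epsilon(y)=\epsilon^{-2}u(y/\epsilon)\to-\ell\delta_0$ together with continuity of the lowest eigenvalue; and (d) is the same configuration of Dirac masses --- one at the origin, which costs nothing in $\Vert xu\Vert_{L^1}$, and $N-1$ further masses each costing $1+\varepsilon/(N-1)$, spaced far apart so that each creates one more zero of the zero-energy solution. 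One factual slip in (b): $\delta_0$ \emph{does} belong to $\dot B^{-\frac12}_{2,\infty}(\R)$, since $\Vert P_\lambda \delta_0\Vert_{L^2}\sim\lambda^{\frac12}$; indeed the paper notes that a small multiple of $\delta_0$ satisfies \eqref{small}. This does not damage your argument (the paper itself only asserts continuity of the eigenvalue in $H^{-1}+L^\infty$), but the obstruction you name is not the real one.

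Part (c) as written has a genuine off-by-one gap. From the inequality you announce, $\Vert xu_-\Vert_{L^1}\ge N_--1$, together with $\Vert xu_-\Vert_{L^1}\le N$, you can only conclude $N_-\le N+1$; no ``absorption of marginal constants'' recovers $N_-\le N$, since what you have is exactly the classical, non-sharp Bargmann bound $N_-\le 1+\Vert xu_-\Vert_{L^1}$. The paper obtains the sharp count from two further observations. First, it argues by contradiction with the nodal intervals of the $(N+1)$-st eigenfunction: of the $N+1$ nodal intervals, exactly $N$ avoid the origin, and the one containing $0$ is discarded at no cost. Second, on each remaining nodal interval $(x_0,x_1)$ it proves the \emph{strict} inequality $1<\int_{x_0}^{x_1}|x|\,|u|\,dx$ (equality would force $u$ to be a single critically coupled Dirac mass there), so the $N$ such intervals force $\Vert xu_-\Vert_{L^1}>N$, contradicting the hypothesis. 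This strictness, plus the free interval at the origin, is precisely what makes (c) match the construction in (d). A secondary point: your zero-energy solution normalized to $1$ at $-\infty$ need not exist under the stated hypotheses, which only control $xu_-$; you should first replace $u$ by $-u_-$ (which only lowers the eigenvalues), or work with an actual eigenfunction as the paper does.
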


\begin{remark} For part (a) we only use the small Besov norm, without the decay in the second term in \eqref{strongsmallness}. But even adding this decay, it is still possible   
have infinitely many negative eigenvalues. The parts (c) and (d) of the above proposition clarify the additional decay which would 
be needed in order to have finitely many eigenvalues. Part c)  has been proven by Seto \cite{MR0340846}. We provide a short elementary argument. 
\end{remark}
\begin{proof}
a) To show the lowest eigenvalue is at least $- \epsilon^2$ we need the inequality
\[
\int_{\mathbb{R}} u \phi^2 dx \lesssim \| \nabla \phi \|_{L^2}^2 + \epsilon^2 \| \phi\|_{L^2}^2.
\]
Here we use only the Besov norm $B^{-\frac12,\epsilon}_{2,\infty}$ for $u$
which guarantees that 
\[
\| u_{\leq \epsilon}\|_{L^\infty} \lesssim \epsilon,
\]
and
\[
\| u_\lambda \|_{L^2} \lesssim \epsilon \lambda^\frac12, \qquad \| u_\lambda \|_{L^\infty} \lesssim \epsilon \lambda,
\qquad \lambda \geq \epsilon.
\]
Now we use the Littlewood-Paley trichotomy to estimate the left hand side,
\[
\int_{\mathbb{R}} u \phi^2 \, dx \lesssim \sum_{\epsilon \leq \lambda,\lambda_1,\lambda_2}\int_{\mathbb{R}} u \phi^2 \, dx .
\]
If $\lambda < \lambda_1 = \lambda_2$ we use the $L^\infty$ bound for $u_\lambda$ to get
\[
\sum_{\lambda_1 > \epsilon} \epsilon \lambda_1 \| u_{\lambda_1}\|_{L^2}^2,
\]
which is controlled by the right hand side.

On the other hand if $\lambda = \lambda_1 > \lambda_2$ then we use $L^\infty$ for $u_{\lambda_2}$
to get the bound
\[
\sum_{\lambda_1 > \lambda_2 > \epsilon} \epsilon \lambda_1^\frac12 \| u_{\lambda_1}\|_{L^2}\,
 \lambda_2^\frac12 \| u_{\lambda_2}\|_{L^2},
\]
which is again estimated by the right hand side.

\bigskip

b) We observe that 
\[  \varepsilon^{-2}   u(x/\varepsilon) \to - \ell \delta_0 \] 
 in $H^{-1}+ L^\infty$. On the other hand the eigenvalues depend continuously on the potential in $H^{-1}+ L^\infty$. But is not hard to check that the potential $-\ell \delta_0$ 
 yields exactly the simple eigenvalue $-(\ell/2)^2$.

\bigskip

c) Replacing $u$ by $-u_-$  decreases the eigenvalues 
of $L_u$, so without any restriction in generality 
we can assume that $u \leq 0$.

Suppose that there are at least $N+1$ nonpositive eigenvalues. Then the $N+1$-th eigenfunction $\phi$ has $N$ points of vanishing, and $N+1$ nodal intervals. Let $(x_0,x_1)$ be one of them. 

The operator 
$L_u$ restricted to $[x_0,x_1]$ with Dirichlet boundary condition
has at least one negative eigenvalue, with the
 restriction of $\phi$ as the corresponding eigenfunction.
On the other hand $L_0$ with the same Dirichlet boundary condition
is positive. Hence a continuity argument shows that there 
exists an unique $h \in (0,1)$ so that the operator $L_{hu}$
has $0$ as the lowest eigenvalue. We denote by $\psi$ the 
corresponding eigenfunction, solving
\[ 
\psi(x_0)=\psi(x_1)= 0,\quad  \quad  -\psi'' + h u  \psi = 0.
\] 
We can freely assume that $\psi > 0$ in $[x_0,x_1]$.
Then $\psi$ is concave there, so we can also assume that 
$\psi'(x_0) = 1$. Hence $\psi(x) \leq x - x_0$ and $\psi'(x_2) < 0$. Thus 
\[
1 < \psi'(x_0) - \psi'(x_1) = -\int_{x_0}^{x_1} h u(t) \psi(x)
\, dx \leq -\int_{x_0}^{x_1} u(t) (t-x_0) 
\, dx
\] 
with equality iff $u$ is a Dirac measure and $h = 1$. 
If $(x_0,x_1)$ is a nodal interval with $0 \le x_0$ then 
\[ 
1 < \int_{x_0}^{x_1} u(t) (t-x_0)\, dx \le \int_{x_0}^{x_1} t u(t) \, dt. 
\]
The argument for $x_1= \infty$ is similar. If on the other hand $x_1 < 0$ then we interchange the roles of $x_0$ and $x_1$ and the same conclusion follows.
Since there are  $N$ nodal intervals not containing $0$, it follows 
that 
\[
\int |x| |u(x)| \, dx > N,
\]
which yields a contradiction.

\bigskip

d) Suppose we find $N+1$ points
\[ 
-\infty < x_0 < \dots < x_{N} < \infty, 
\]
a measure $u$ in $(x_0,x_{N})$
and a solution $\phi$ to 
\[ 
- \phi'' + u \phi = 0 \qquad \text{ in } (x_0,x_{N})
\] 
vanishing at these $N+1$ points.

Then $\phi$ is an eigenfunction to the $N$-th eigenvalue of the Schrödinger operator on $(x_0,x_N)$ with Dirichlet boundary condition. By the variational characterization of eigenfunctions we see that the Schrödinger operator on $\R$ has at least $N$ negative eigenvalues.  
We construct a sum of Dirac measures and $\phi$ with these properties. A simple approximation argument yields the full result. 

We choose $x_0=-1$, and a sequence of points $x_0=-1 < 0=y_1 < x_1 < y_2 < \dots < y_{N}< x_{N} $ and  we put the Dirac masses at the points $y_j$. We choose $\phi$ continuous and affine on $[x_0,y_1]$, $[y_j,y_j{j+1}]$ and $[y_{N},x_{N}]$. Let $\Delta \phi(y_j) $ be the jump of the derivatives at this point. We assume $y_j$ to be a point of a local maximum of $|\phi|$. Then 
 \[  
 \phi''(y_j) = \Delta \phi(y_j) \delta_{y_j}= \frac{\Delta \phi(y_j)}{\phi(y_j)} \phi(y_j) \delta_{y_j} .
 \] 
 Starting at $x_0=-1$, $y_1=0$, $x_1=1$ and 
 \[ 
 \phi = x-1 \qquad \text{ for } -1\le x \le 0, \qquad \phi(x)=-1+x  \quad \text{ for } 0\le x \le y_2  
 \] 
 with $y_2$ to be chosen. We put a multiple of a Dirac measure at $y_2$
\[
- \phi''-  \frac{1+\varepsilon/(2N)}{y_2-1} \delta_{y_2} \phi =0 .
\]
 Then $\phi(y_2)=1-y_1$, $\phi'(y_2+) > 0$. If we choose $y_2$ large we can ensure that its contribution to the $L^1$ norm of $xu$ is only slightly larger than 1,
\[  
y_2 \frac{1+\varepsilon/(2N)}{y_2-1}  < 1+\varepsilon/N. 
\]
After the point $y_2$ the function $\phi$ is linearly increasing. We denote by $x_2$ the point where it vanishes and then repeat the procedure to chose $y_2 <x_2<y_3$, as the location of the next Dirac mass in $u$.  We repeat this procedure  to construct all the $y$'s and $x$'s. 
\end{proof} 

The inverse scattering method allows to study solutions under stronger conditions as in this paper, but for all times. This is a nontrivial task. Here we adapt and explain results of Schuur \cite{schuur} for special initial data. We fix a Schwartz function $\phi_0$ with 
\[ 
\int_{\mathbb{R}} \phi_0\,  dx = -1. 
\]
We consider the initial data $u_0 = \varepsilon\phi_0$. It satisfies the smallness condition if $\varepsilon>0$ is sufficiently small. 
By Proposition \eqref{spectrum} we know that there is exactly one negative eigenvalue $-\lambda$ of size $-\varepsilon^2$. The corresponding pure soliton is 
\[
- 2\lambda \sech^2( \sqrt{\lambda}(x-y_0-\lambda  t )).
\] 
Schuur proved that there exists $y_0$ with $|y_0| \lesssim 1 $ so that to the right of the self-similar region we get 
\[ 
\Vert u(t)+ 2 \lambda \sech^2(\sqrt{\lambda}(x-y_0-\lambda t)) \Vert_{L_x^\infty(-c_1 t^{\frac13},\infty)} \le c_2 t^{-\frac13},
\] 
for all $t \ge t_0$, with precise formulas for the constants $t_0$, $y_0$, $c_1$ and $c_2$. It is not too hard to check their size: 
\[ 
1 \lesssim  c_1, \qquad |y_0| \lesssim 1, \qquad c_2 \lesssim \varepsilon, \qquad t_0 \ge \varepsilon^{-3}. 
\]
If $t \sim \varepsilon^{-3} $ the size of the soliton is the same as the size of the error estimate, and this is the scale on which the soliton emerges. 




\end{document}